\newtheorem{thm}{Theorem}
\newtheorem{lem}{Lemma}
\newtheorem{rem}{Remark}
\newtheorem{cor}{Corollary}
\newtheorem{prop}{Proposition}
\newtheorem{df}{Definition}
\newtheorem{ques}{Question}
\newenvironment{manualtheorem}[1]{%
  \manualtheoreminner
}{\endmanualtheoreminner}
\newcommand{\taf}{T_{a_4}(a_0)}
\newcommand{\cc}{\mathcal{C}(S_g)}
\begin{document}

\title{Distance $5$ Curves in the Curve Graph of Closed Surfaces}

\author{Kuwari Mahanta} 
\maketitle

\begin{abstract}
Let $S_g$ denote a closed, orientable surface of genus $g \geq 2$ and $\cc$ be the associated curve graph. Let $d$ be the path metric on $\cc$ and $a_0$ and $a_4$ be a pair of curves on $S_g$ with $d(a_0, a_4) = 4$. In this article, we fix the vertex $a_0$ and apply the Dehn twist about $a_4$, $T_{a_4}$, to it in an attempt to create pairs of curves at a distance $5$ apart. We give a necessary and sufficient topological condition for $d(a_0, \taf)$ to be $4$. We then characterise the pairs of $a_0$ and $a_4$ for which $5 \leq d(a_0, \taf) \leq 6$. Lastly, we give an example of a pair of curves on $S_2$ which represent vertices at a distance $5$ in $\mathcal{C}(S_2)$ with intersection number $144$.
\end{abstract}

\section{Introduction}
Let $S_g$ be a closed, orientable surface of genus $g \geq 2$. By a curve, $\gamma$, on $S_g$, we mean an embedding of the unit circle in $S_g$ such that the image isn't null-homotopic. In \cite{H1}, William J. Harvey introduced a finite dimensional simplical complex corresponding to $S_g$, called the complex of curves of $S_g$, as a tool to study the Teichmuller spaces of Riemann surfaces. Our work focuses on the $1$-skeleton of the curve complex of $S_g$, called the curve graph. The curve graph of $S_g$, denoted by $\cc$, is defined as follows : the set of vertices of $\cc$ comprises of isotopy classes of curves on $S_g$ and two vertices in $\cc$ share an edge if they have disjoint representatives. As $\cc$ is connected, a path-metric can be put on it by defining the distance, $d$, between any two vertices as the minimal number of edges in any edge path between them. For a curve $\gamma$ on $S_g$, by an excusable abuse of notation, we will use $\gamma$ to represent both the curve $\gamma$ as well as its isotopy class. For any collection of curves on $S_g$, we will consider representatives which are in minimal position with each other. The intersection number between any two curves is the least number of intersections between any two curves in their respective isotopy classes. For curves $\alpha$ and $\beta$ on $S_g$ we denote their intersection number by $i(\alpha, \beta)$. For $n \in \mathbb{N}$, we denote the minimal intersection number of the set of curves on $S_g$ which are at a distance $n$ apart as $i_{min}(g,n)$.

The coarse geometry of the curve complex plays a pivotal role in understanding not only the Teichm\"uller theory but also the hyperbolic structures of $3$-manifolds and the mapping class group of surfaces. A seminal and pioneering work in the direction of the exploration of this coarse geometry are the articles \cite{MM1, MM2} by Masur and Minsky. In comparison, the local geometry of $\cc$ remains relatively unexplored.  We attempt a study in this direction by looking at the impact of low powers of Dehn twists on vertices of $\cc$ at shorter distances apart. Let $a_0$, $a_1$, $a_2$,\dots, $a_n$ for $n \in \mathbb{N}$ be a geodesic of length $n$ in $\mathcal{C}(S_g)$. Let $k \in \mathbb{N}$. For $n= 1, 2$, it is an easy observation that $d(a_0, T_{a_n}^k(a_0)) = d(a_0, a_n) - 1$ and $d(a_0, T_{a_n}^k(a_0)) = d(a_0, a_n)$, respectively. From \cite{chicken}, we have that for $n=3$, $d(a_0, T_{a_n}^k(a_0)) = d(a_0, a_n) + 1$. We are motivated to look at this mechanism with the long term promise of creating examples of curves at a distance $n+1$ by using curves at a distance $n$ apart. Workable examples of pairs of curves which are at a distance $n \geq 5$ apart on $\mathcal{C}(S_g)$ with low intersection number are not known in the literature.

In this article, we first show that $4 \leq d(a_0, \taf) \leq 6$. In section \ref{sec:set}, we define a family of curves that fill $S_g$ along with $a_0$ which we call as scaling curves. The idea behind a scaling curve is that $a_4$ encodes the information of a few naturally occurring curves which are at a distance $3$ from $a_0$ and distance $1$ from $a_4$. We employ these scaling curves to give a necessary and sufficient condition for $d(a_0, \taf) =4$ as stated in lemma \ref{lem:geq4}. 

Let $N$ be an annular neighbourhood of $a_4$ and $B_m(\taf)$ be the sphere of radius $m$ around $\taf$. Let $\delta \in B_1(\taf)$ and $c \in B_2(\taf) \cap B_1(\delta)$. Then, $c$ is a standard single strand curve if $i(c, a_4) = 1$ and if there exists an isotopic representative of $c$ such that $(c \cap a_0) \subset N$. In section \ref{sec:conc}, we describe a placement of the components of $N \setminus (a_0 \cup a_4)$ which is equivalent to there being a curve on $S_g$ which is mutually disjoint from $a_0$ and $c$. We call this arrangement of the components as the stacking property. We then have the following theorem which gives that $5 \leq d(a_0, \taf) \leq 6$ for a judicious choice of $a_0$ and $a_4$.

\begin{manualtheorem}{\ref{thm:geq5}}
Let $a_0$ and $a_4$ be curves on $S_g$ such that $d(a_0, a_4) = 4$ and the components of $S_g \setminus (a_0 \cup a_4)$ doesn't contain any hexagons. Then, $d(a_0, \taf) \geq 5$ if and only if there doesn't exist any standard single strand curve $c \in B_2(\taf)$ having the stacking property.
\end{manualtheorem} 

Let $b_0$ and $b_4$ be the pair of minimally intersecting curves at a distance $4$ on $\mathcal{C}(S_2)$ as given in \cite{BMM}. In section \ref{sec:eg5}, we show that $d(b_0, T_{b_4}(b_0)) = 5$ and give a geodesic between $b_0$ and $T_{b_4}(b_0)$. We note that contrary to the hypothesis of theorem \ref{thm:geq5}, there exists components of $S_g \setminus (b_0 \cup b_4)$ which are hexagons. An immediate conclusion from this example is that $i_{min}(2,5) \leq 144$. Further, this gives us a rendering of an example of a pair of distance $5$ curves. 

Using arguments involving subsurface projection, the authors in \cite{AT1} show that $d(a_0, T^B_{T^B_{a_3}(a_0)}(a_0)) \geq 6$ for a large enough constant $B \in \mathbb{N}$. The same arguments can be used to show that for any $a_4$ there exists a constant $K \in \mathbb{N}$ such that $d(a_0, T_{a_4}^k(a_0)) \geq 6$ for every $k \geq K$. We conclude that unlike $n \leq 3$, for $n=4$ we have that $d(b_0, T_{b_4}(b_0)) = d(b_0, b_4) + 1$ whereas, $d(b_0, T_{b_4}^k(b_0)) = d(b_0, b_4) + 2$, $\forall k \geq K$ where $K$ is some large enough constant. Thus, when $n\leq 3$ the value of $d(a_0, T_{a_n}^k(a_0))$ is independent of $k$ whereas, when $n=4$, this value depends on $k$. We conjecture that $d(a_0, \taf) =5$ only if $a_0$ and $a_4$ are minimally intersecting.

The above information thus prompts us to ask the following questions :
\begin{ques}
Given a pair of curves $a_0$ and $a_4$ on $S_g$ with $d(a_0, a_4)=4$, what are the values of $k \in \mathbb{N}$ such that $d(a_0, T_{a_4}^k(a_0)) = 6$?
\end{ques}

\begin{ques}
Is $i_{min}(g, 5) \leq i_{min}(g,4)^2$?
\end{ques}

In \cite{MM2}, Masur and Minsky proved that the curve complex corresponding to $S_g$ is $\delta$-hyperbolic. Later, the authors in \cite{A1, B1, CRS1, HPW1} independently showed that a uniform $\delta$ exists. From \cite{chicken}, we note that the geodesic triangle in $\cc$ with vertices $a_0$, $a_3$ and $T_{a_3}^k(a_0)$ is $0$-hyperbolic for every $k \in \mathbb{N}$. The results in \cite{AT1} give that the geodesic triangle in $\cc$ with vertices $a_0$, $a_3$ and $T^B_{T^B_{a_3}(a_0)}(a_0)$ is $0$-hyperbolic for some constant $B \in \mathbb{N}$. As a consequence of the aforementioned example in section \ref{sec:eg5} of the pair of curves, $b_0$ and $T_{b_4}(b_0)$, we observe that the geodesic triangle formed with vertices $b_0$, $b_4$ and $T_{b_4}(b_0)$ is $1$-hyperbolic. This leads to the conclusion that geodesic triangles formed with vertices $a_0$, $a_n$ and $T_{a_n}^k(a_0)$ need not be $0$-hyperbolic for all values of $k \in \mathbb{N}$. This leads to the following prospective question that was suggested by Joan Birman: 
\begin{ques}
Let $a_0$ and $a_n$ be a pair of curves on $S_g$ such that $d(a_0, a_n) = n$. Let $\Delta$ be the geodesic triangle in $\cc$ with vertices $a_0$, $a_n$ and $T_{a_n}(a_0)$. What is the minimum value of $\delta$ for which $\Delta$ is $\delta$-hyperbolic?
\end{ques}


\section{Acknowldgement}
The author would like to thank Sreekrishna Palaparthi for his invaluable comments and carefully reading through the drafts of this manuscript. The author is grateful to Joan Birman for her insightful remarks that helped further the scope of this work.


\section{Setup}
\label{sec:set}
In this section for a given filling pair of curves on $S_g$, we describe isotopic representatives of these curves and select corresponding regular neighbourhoods such that they satisfy certain favourable conditions. We further give terminology which aids the discourse of the proof of theorem \ref{thm:geq5}. Lastly we introduce a few techniques in the form of lemmas that will be used in section \ref{sec:main}.

In this article, for any ordered index, we follow cyclical ordering. For instance, if $i \in \{1, 2, \dots, m\}$, $i=m+1$ will indicate $i=1$. For any collection of curves on $S_g$, we will consider their isotopic representations which are in minimal position with each other. Let $\alpha$ be a curve on $S_g$. We denote the sphere of radius $n \in \mathbb{N}$ in $\cc$ with centre $\alpha$ as $B_n(\alpha)$. We use $R_a$ to denote an regular neighbourhood of $\alpha$. Let $\partial(R_\alpha)$ denote the multicurve comprising of the two boundary curves of $R_\alpha$, each of which is isotopic to $\alpha$. The following definition \ref{def:amenable} describes isotopic representatives of a pair of filling curves on $S_g$ and a choice of their regular neighbourhoods. A proof to the existence of these representatives can be found in \cite{chicken}. 

\begin{df}\label{def:amenable}[Amenable to Dehn twist in special position]
Let $\lambda$ and $\mu$ be two curves on $S_g$ and let $R_\lambda$ and $R_\mu$ be closed regular neighbourhoods of $\lambda$ and $\mu$ respectively. We say that the $4$-tuple $(\lambda, \mu, R_\lambda, R_\mu)$ is \textit{amenable to Dehn twist in special position} if the following hold:
\begin{enumerate}
 \item $\lambda$ and $\mu$ intersect transversely and minimally on $S_g$,
 \item $\lambda$ and $\mu$ fill $S_g$,
 \item the number of components of $R_\lambda \cap R_\mu$ is equal to the intersection number of $\lambda$ and $\mu$ and each of these components is a disc.
 \item $\mu$ and $\lambda$ are in minimal position with the components of $\partial(R_\lambda)$ and $\partial(R_\mu)$, respectively.
\end{enumerate}
\end{df}

\begin{figure}
     \centering
     \begin{subfigure}{0.32\textwidth}
         \centering
	\includegraphics[width=\textwidth]{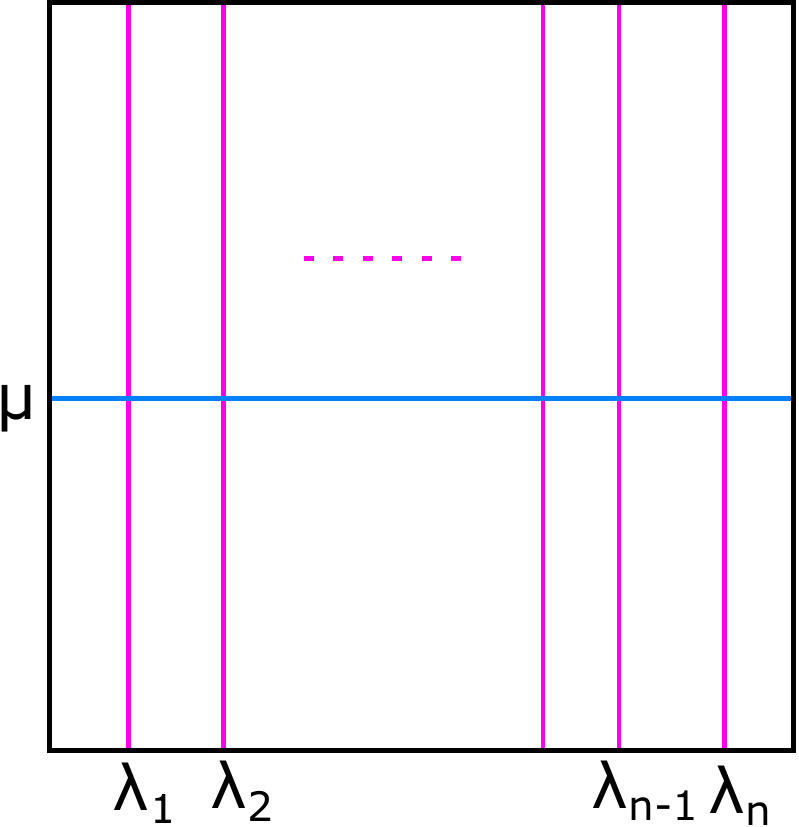}
     \end{subfigure}
     \hspace{2cm}
     \begin{subfigure}{0.3\textwidth}
         \centering
	\includegraphics[width=\textwidth]{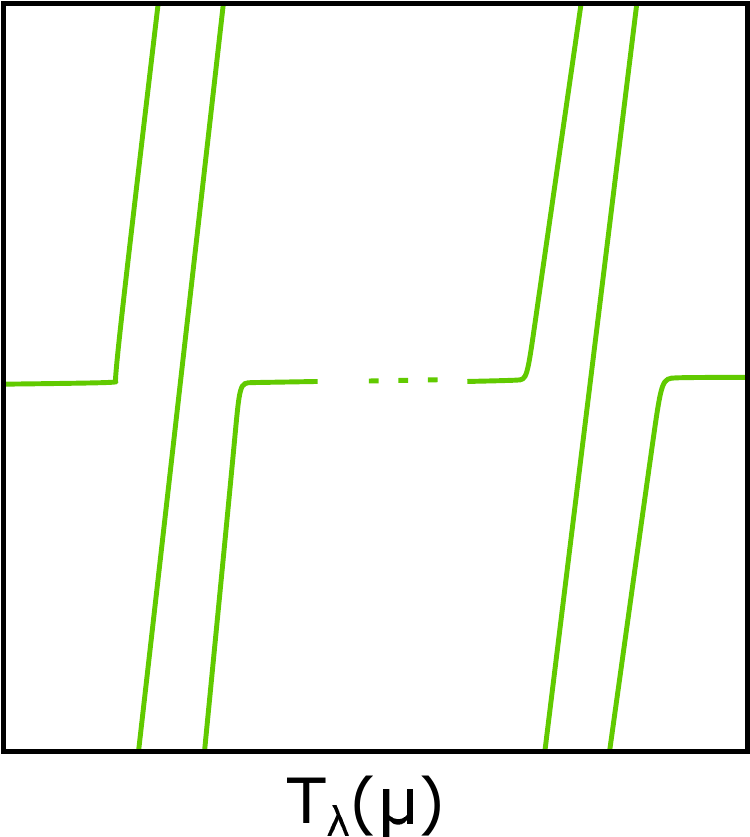}
     \end{subfigure}
	\caption{Surgery performed on $\mu$ and $\lambda_i$'s in $R_\lambda \cap 		R_\mu$}
	\label{img:DTsurgery}
\end{figure}

Let $(\lambda, \mu, R_{\lambda}, R_{\mu})$ be amenable to Dehn twist in special position and $n = i(\lambda, \mu)$. Consider $\lambda_1, \dots, \lambda_n$ be $n$- copies of  $\lambda$ such that they are in minimal position with each other along with $\lambda$, $\mu$, $\partial(R_{\lambda})$ and $\partial(R_{\mu})$. We perform a surgery as in figure \ref{img:DTsurgery} on the arcs of $\lambda_i$ in every component of $R_\lambda \cap R_\mu$. The resultant of this surgery is a simple closed curve on $S_g$ and is a representative of $T_\lambda(\mu)$ (see \cite{FM}) which is in minimal position with $\lambda$, $\mu$, $\partial(R_{\lambda})$ and $\partial(R_{\mu})$. We say \textit{$T_\lambda(\mu)$ is in special position w.r.t. $\lambda$ and $\mu$} to mean a representative of $T_\lambda(\mu)$ as obtained after this surgery. For details regarding special position of Dehn twist, see section $2$ of \cite{chicken}. 

$S_g$ can also be considered as a $2$ dimensional CW complex formed from the graph of $\mu \cup \lambda$ on $S_g$ in the following manner : The $0$ skeleton comprises of the $n$ distinct points in $\mu \cap \lambda$. The $1$-skeleton is formed by joining two vertices with an edge if and only if there is an arc of either $\mu$ or $\lambda$ between them. Attach a $2$ simplex to a cycle in the $1$ skeleton if and only if the arcs of $\mu$ and $\lambda$ corresponding to the edges of the cycle bound a disc of $S_g \setminus (\mu \cup \lambda)$. We consider this viewpoint of looking at $S_g$ as it helps to analyse curves on $S_g$ by looking at their arcs on the faces of $S_g$.

Let $\alpha$ be a curve on $S_g$ which intersects $\mu$ and $\lambda$ minimally. Any arc in $\alpha \cap R_\lambda$ with end points on distinct components of $\partial(R_\lambda)$ is called a \textit{strand of $\alpha$} in $R_\lambda$. If $\alpha$ is such that $i(\alpha, \lambda)=1$ and $\alpha \cap \mu \subset R_\lambda$ then $\alpha$ is called a \textit{standard single strand curve}.

Given an ordered set of points on $\mu$, we now give a shorthand notation to represent the arcs of $\mu$ between these points. Let $\mu$ be with a preferred orientation and $x_1, \dots, x_{m \geq 3}$ be distinct points on $\mu$. Considering $\mu$ as the embedding $\mu : [0,1] \longrightarrow S_g$ with $\mu(0) = \mu(1)$, we say that  $x_1, \dots, x_m$ are \textit{along the orientation of $\mu$} if $\mu^{-1}(x_i) < \mu^{-1}(x_{i+1})$ for $i \in \{1, \dots, m-1\}$. We use $\mu_{[x_i, x_{i+1}]}$ to denote the undirected arc of $a$ with end points $x_i$, $x_{i+1}$ and which has no other $x_j$'s on it. Since $\mu_{[x_i, x_{i+1}]}$ is undirected, we set $\mu_{[x_i, x_{i+1}]} = \mu_{[x_{i+1}, x_i]}$. For $i\in \{1, \dots, m\}$, let $b_i$ be curves or essential arcs on $S_g$ such that $b_i \cap \mu = x_i$. When the context is clear, we will interchangeably  use $\mu_{[x_i, x_{i+1}]}$ and $\mu_{[b_i, b_{i+1}]}$.

Let $a_0$, $a_1$, $a_2$, $a_3$, $a_4$ be a geodesic in $\cc$. Let $i(a_0, a_4) = k$ and set $K = \{1, \dots, k\}$. Select some orientation for $a_0$ and $a_4$. Let $a_0 \cap a_4$ = $\{w_i : i \in K \}$ be ordered along the orientation of $a_4$. For $i \in K$, let $a_0^i$ be the arc of $a_0 \cap R_{a_4}$ containing $w_i$. Let the two component curves of $\partial(R_{a_4})$ be $\partial_+(R_{a_4})$ and $\partial_-(R_{a_4})$ such that $a_0^1$ with the induced orientation from $a_0$ goes from $\partial_+(R_{a_4})$ to $\partial_-(R_{a_4})$. There is a natural orientation of $\partial_+(R_{a_4})$ and $\partial_-(R_{a_4})$ induced by the orientation of $a_4$. For $i \in K$, let $u_i = a_0^i \cap \partial_+(R_{a_4})$ and $v_i = a_0^i \cap \partial_-(R_{a_4})$. We call the rectangle in $R_{a_4}$ with boundaries $a_{4_{[w_i, w_{i+1}]}}$, $\partial_+(R_{a_4})_{[u_i, u_{i+1}]}$, $a_{0_{[u_i, w_i]}}$ and $a_{0_{[u_{i+1}, w_{i+1}]}}$ as a \textit{top bucket} and denote it by $T_i$. Similarly, we call the rectangle in $R_{a_4}$ with boundaries $a_{4_{[w_i, w_{i+1}]}}$, $\partial_-(R_{a_4})_{[v_i, v_{i+1}]}$, $a_{0_{[w_i, v_i]}}$ and $a_{0_{[w_{i+1}, v_{i+1}]}}$ as a \textit{bottom bucket} and denote it by $B_i$. Figure \ref{img:bucket} gives a schematic of a top and a bottom bucket. We note that each top and bottom bucket is contained in a unique component of $S_g \setminus (a_0 \cup a_4)$.  Let $H$ be a top (or, bottom) bucket and let $O$ be the component of $S_g \setminus (a_0 \cup a_4)$ containing $H$. We then call $H$ to be a \textit{top (or, bottom) bucket in $O$}.

\begin{figure}
\centering
\includegraphics[scale=0.6]{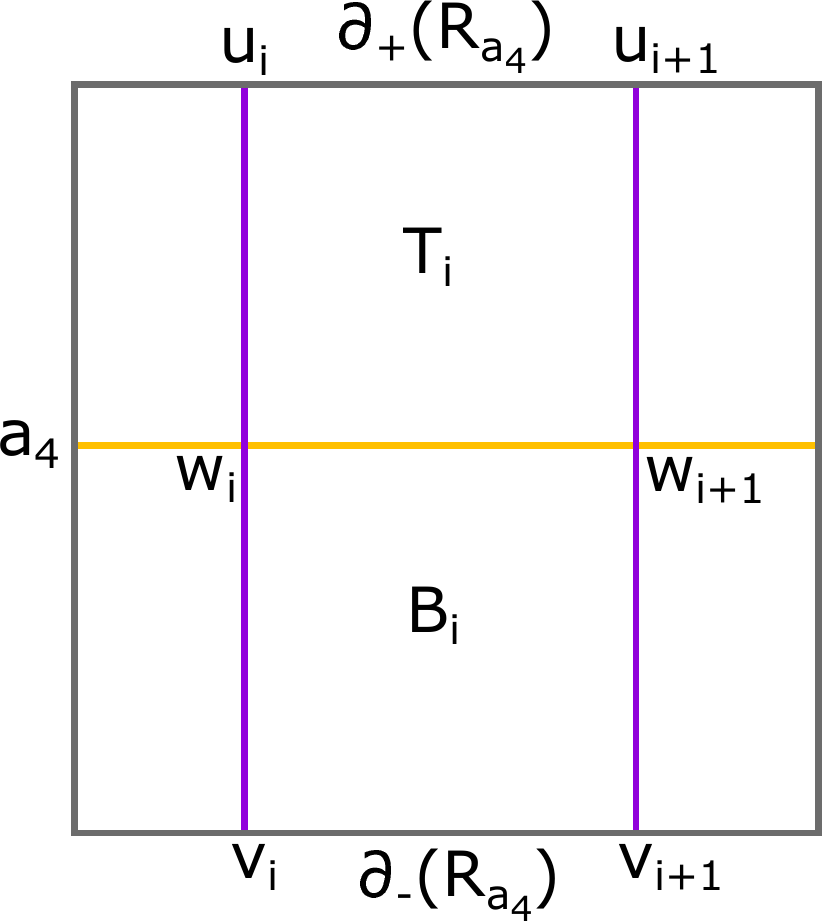}
\caption{Top bucket $T_i$ and bottom bucket $B_i$}
\label{img:bucket}
\end{figure}

Considering $\taf$ to be in special position w.r.t. $a_0$ and $a_4$, there exists a representative of $a_0$ such that the possible schematics of the strands of $\taf$ in $R_{a_4}$ are as in figure \ref{fig:rectified_delta}. The details to the choice of such a representative of $a_0$ can be found in Step $1$ (first paragraph) of the proof of theorem $2$ in \cite{chicken}. Any path between $a_0$ and $\taf$ is of the form $\taf$, $\delta$, $c$, $\Theta$, $a_0$ where $\delta \in B_1(\taf)$, $c \in B_1(\delta)$ and $\Theta$ is a non-trivial path. We now give an algorithm to select a representative of $\delta$ such that for each strand of $\delta$ in $R_{a_4}$ there exists $i \in K$ such that the end points of the strand lies on $\partial_+(R_{a_4})_{[u_i, u_{i+1}]}$ and $\partial_-(R_{a_4})_{[v_i, v_{i+1}]}$. Applying $T_{a_4}$ to a geodesic between $a_0$ and $a_4$, we get that $d(a_4, T_{a_4}(a_0))$ = $4$. Since $\delta \in B_1(\taf)$, we have that $d(\delta, a_4) \geq 3$. From \cite{AH1}, we have that $i_{min}(g, 3) \geq 4$. Thus $i(\delta, a_4) \geq 4$, i.e. there are at least $4$ strands of $\delta$ in $R_{a_4}$. Consider a representative of $\delta$ which is in minimal position with $a_0$, $a_4$, $T_{a_4}(a_0)$, $\partial_+(R_{a_4})$ and $\partial_-(R_{a_4})$. We can choose a representative of $\delta$ such that $\delta \cap a_0 \subset R_{a_4}$ by performing the isotopy $I_2$ described in the step $2$ of Theorem $2$ in \cite{chicken}. An intuitive picture of this isotopy is to finger push the points in $\delta \cap a_0$ which don't lie in $R_{a_4}$, along $a_0$, into $R_{a_4}$. This ``finger pushing'' doesn't disturb the minimal position of $\delta$ and $\taf$. The strands of $\delta$ in $R_{a_4}$ attained after performing the above isotopies can be one of the four possible schematics as in figure \ref{fig:rectified_delta}. If a strand of $\delta$ in $R_{a_4}$, say $\delta'$, is as in figure \ref{fig:rectified_delta_a} or \ref{fig:rectified_delta_b}, we can perform an isotopy of $\delta$ such that the isotopic image of $\delta'$ is as in figure \ref{fig:rectified_delta_c} or \ref{fig:rectified_delta_d} and the isotopy doesn't disturb the other strands of $\delta$. This isotopy of $\delta$ is defined as $I_3$ in the step $3$ of theorem $2$ in \cite{chicken}. The isotopic copy thus obtained is said to be ``$\delta$ in a rectified position" (Step 3, Theorem 2, \cite{chicken}). The isotopies $I_1$, $I_2$ and $I_3$ in \cite{chicken} described for curves $\gamma \in B_1(a_3)$ can also be applied to $\delta$ because they only use the fact that $d(a_0, a_3) \geq 3$.

\begin{figure}
     \centering
     \begin{subfigure}{0.3\textwidth}
         \centering
	\includegraphics[width=\textwidth]{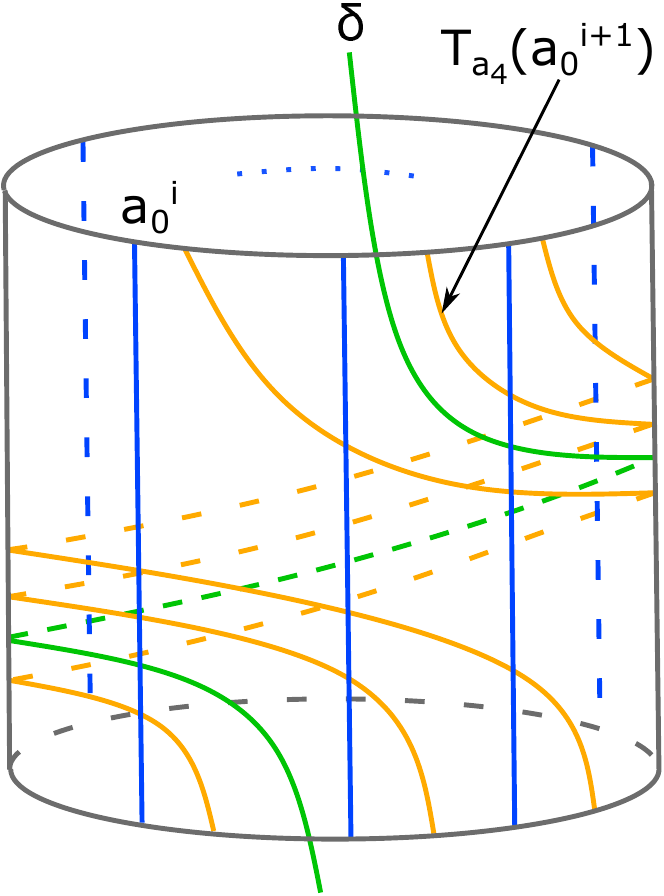}
	\caption{}
	\label{fig:rectified_delta_a}
     \end{subfigure}
     \hspace{0.2\textwidth}
     \begin{subfigure}{0.3\textwidth}
         \centering
	\includegraphics[width=\textwidth]{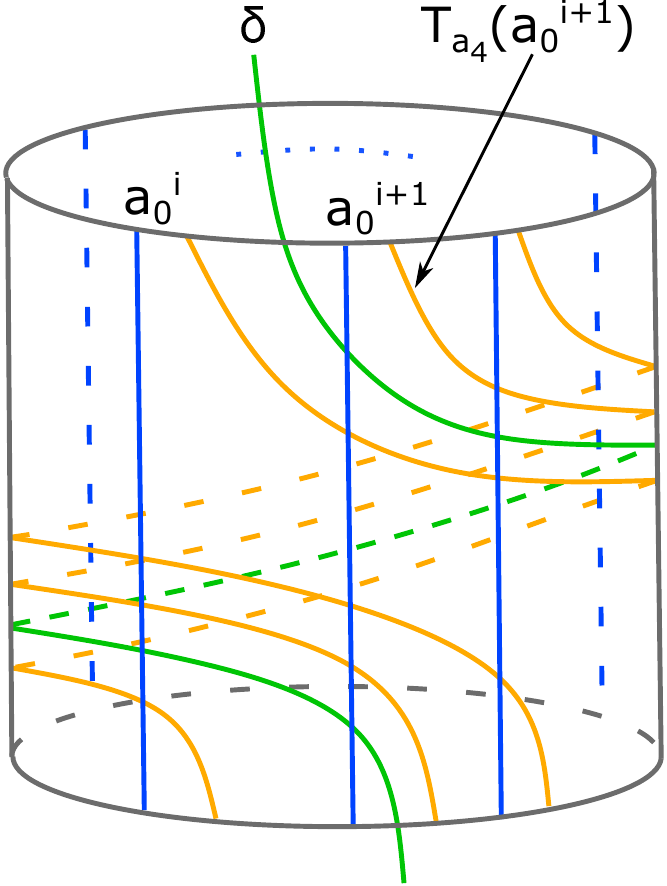}
	\caption{}
	\label{fig:rectified_delta_b}
     \end{subfigure}
     \newline
     \begin{subfigure}{0.3\textwidth}
         \centering
	\includegraphics[width=\textwidth]{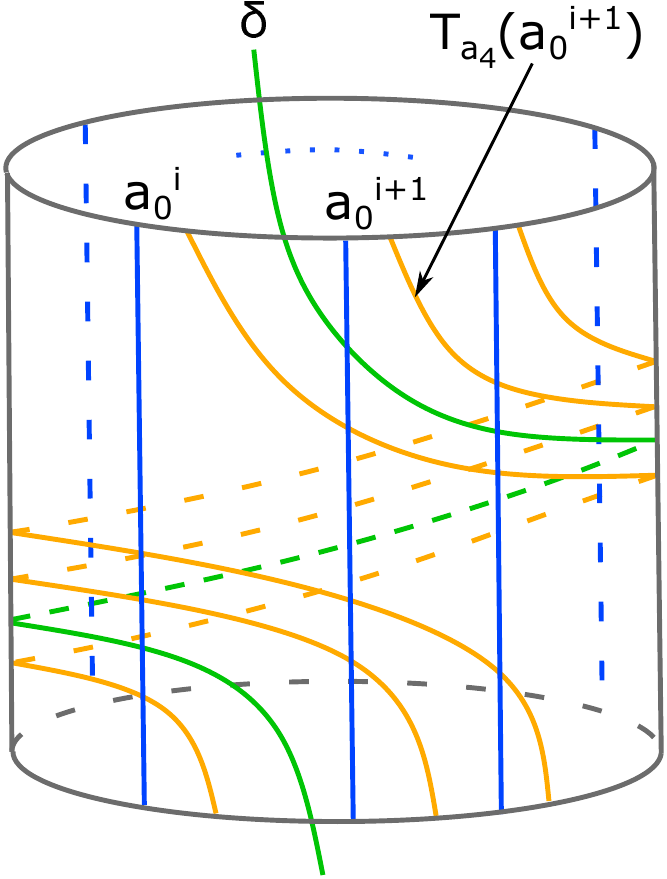}
	\caption{}
	\label{fig:rectified_delta_c}
     \end{subfigure}
      \hspace{0.2\textwidth}
     \begin{subfigure}{0.3\textwidth}
         \centering
	\includegraphics[width=\textwidth]{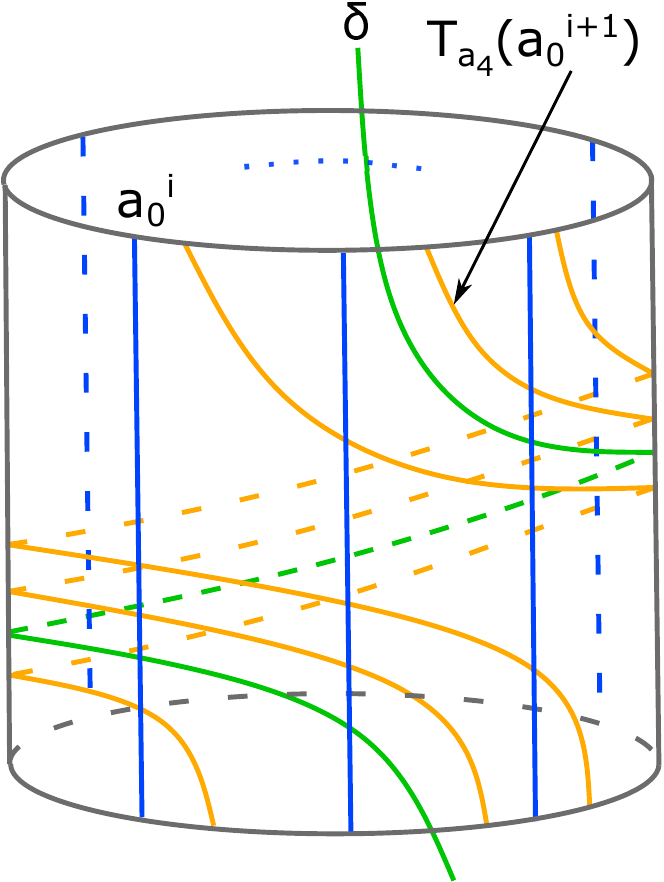}
	\caption{}
	\label{fig:rectified_delta_d}
     \end{subfigure}
     \caption{The possible starting and ending points of strands of $\delta$ in $R_{a_4}$}
     \label{fig:rectified_delta}
\end{figure}

The following proposition \ref{prop:geq4} is a conclusion drawn from the proof of theorem \ref{thm:chckn} in \cite{chicken}.

\begin{thm}[Theorem $2$, \cite{chicken}]
\label{thm:chckn}
Let $S_g$ be a closed surface of genus $g \geq 2$. Let $\alpha$ and $\gamma$ be two curves on $S_g$ with $d(\alpha, \gamma) = 3$. Then, $d(T_\gamma(\alpha), \alpha) = 4$.
\end{thm}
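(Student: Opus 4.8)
The plan is to prove the two inequalities $d(T_\gamma(\alpha),\alpha)\le 4$ and $d(T_\gamma(\alpha),\alpha)\ge 4$ separately; the first is a one-line observation and the second splits into ruling out distances $1$, $2$ and $3$, of which only the last requires real work. For the upper bound I would fix a geodesic $\alpha=c_0,c_1,c_2,c_3=\gamma$. Since $i(c_2,\gamma)=0$, the twist $T_\gamma$ fixes the isotopy class of $c_2$, so $c_2=T_\gamma(c_2)$ is disjoint from $T_\gamma(c_1)$, which is disjoint from $T_\gamma(c_0)=T_\gamma(\alpha)$. Hence $\alpha,c_1,c_2,T_\gamma(c_1),T_\gamma(\alpha)$ is an edge path of length $4$ and $d(T_\gamma(\alpha),\alpha)\le 4$.

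For the lower bound I would first dispose of distances $1$ and $2$ by intersection numbers. Because $d(\alpha,\gamma)=3$, the curves $\alpha$ and $\gamma$ fill $S_g$, so $i(\alpha,\gamma)>0$; the standard estimate for intersections with twisted curves gives $i(T_\gamma(\alpha),\alpha)=i(\alpha,\gamma)^2>0$, whence $d(T_\gamma(\alpha),\alpha)\ge 2$. Next, suppose some curve $\beta$ is disjoint from both $\alpha$ and $T_\gamma(\alpha)$. Then $\beta$ cannot also be disjoint from $\gamma$, for otherwise $\beta$ would witness $d(\alpha,\gamma)\le 2$; hence $i(\beta,\gamma)\ge 1$. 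But $i(\beta,T_\gamma(\alpha))=0$ is equivalent to $i(T_\gamma^{-1}(\beta),\alpha)=0$, whereas the twist estimate forces $i(T_\gamma^{-1}(\beta),\alpha)\ge i(\beta,\gamma)\,i(\gamma,\alpha)-i(\beta,\alpha)=i(\beta,\gamma)\,i(\gamma,\alpha)>0$, a contradiction. Therefore $d(T_\gamma(\alpha),\alpha)\ge 3$.

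The substantive case is ruling out $d(T_\gamma(\alpha),\alpha)=3$. Suppose there were a path $\alpha,\delta,\beta,T_\gamma(\alpha)$ of length $3$; I would aim to extract from it an essential curve disjoint from both $\alpha$ and $\gamma$, contradicting $d(\alpha,\gamma)=3$. The plan is to put $T_\gamma(\alpha)$ in special position with respect to $\alpha$ and $\gamma$ and fix an annular neighbourhood $R_\gamma$, so that $T_\gamma(\alpha)$ agrees with $\alpha$ outside $R_\gamma$ and each of its strands winds once around $\gamma$ inside $R_\gamma$. Using the isotopies $I_1,I_2,I_3$ of \cite{chicken} (valid here because $d(\alpha,\gamma)\ge 3$) I would bring $\beta$, and likewise $\delta$, into a rectified position, so that $\beta\cap\alpha\subset R_\gamma$ and every strand of $\beta$ in $R_\gamma$ has its endpoints on $\partial_+(R_\gamma)$ and $\partial_-(R_\gamma)$ in one of the finitely many patterns of figure \ref{fig:rectified_delta}. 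Since $\beta$ is disjoint from $T_\gamma(\alpha)$ while $i(\beta,\gamma)\ge 1$, the strands of $\beta$ in $R_\gamma$ are severely constrained — each must run parallel to a winding strand of $T_\gamma(\alpha)$, and outside $R_\gamma$ the curve $\beta$ must avoid $\alpha$. Organising this by the top and bottom buckets that $\alpha\cup\gamma$ cuts out of $R_\gamma$, a finite case analysis on the joint strand patterns of $\delta$ and $\beta$ should show that some essential simple closed curve assembled from sub-arcs of $\delta$, of $\beta$ and of $\partial(R_\gamma)$ is disjoint from $\alpha$ and from $\gamma$. This contradiction gives $d(T_\gamma(\alpha),\alpha)\ge 4$, and with the upper bound $d(T_\gamma(\alpha),\alpha)=4$.

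I expect the last step to be the only genuine difficulty: one must check that the normalising isotopies really do reduce $\delta$ and $\beta$ to the short list of local configurations without losing cases, and that in every configuration the available sub-arcs can be closed up into an \emph{essential} curve certifying $d(\alpha,\gamma)\le 2$. The degenerate paths, where $\delta=\alpha$ or $\beta=T_\gamma(\alpha)$ — precisely the distance $1$ and $2$ situations already handled above by intersection numbers — form a convenient warm-up for the bookkeeping involved.
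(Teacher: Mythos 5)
Your upper bound and your exclusion of distances $1$ and $2$ are correct and standard: the path $\alpha, c_1, c_2=T_\gamma(c_2), T_\gamma(c_1), T_\gamma(\alpha)$ gives $d(\alpha,T_\gamma(\alpha))\le 4$, and the intersection formula $i(T_\gamma(\alpha),\alpha)=i(\alpha,\gamma)^2$ together with the twist inequality rules out $d\le 2$. Note also that the paper under review does not prove this theorem at all; it quotes it from \cite{chicken} and only extracts one ingredient of that proof, namely that every $\delta\in B_1(T_\gamma(\alpha))$ fills with $\alpha$ (this is exactly Proposition \ref{prop:geq4}).

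The genuine gap is in the only substantive step, the exclusion of $d(T_\gamma(\alpha),\alpha)=3$. What you offer there is a plan, not an argument: you assert that ``a finite case analysis on the joint strand patterns of $\delta$ and $\beta$ should show'' that an essential curve disjoint from both $\alpha$ and $\gamma$ can be assembled from sub-arcs of $\delta$, $\beta$ and $\partial(R_\gamma)$, but this case analysis is never carried out, and it is the entire content of the theorem. Moreover, as stated the target is not obviously attainable: the hypothesis hands you $\delta$ disjoint from $\alpha$ and $\beta$, i.e.\ a witness that $\beta$ and $\alpha$ do not fill, but both $\delta$ and $\beta$ may (and $\beta$ must) intersect $\gamma$, so there is no a priori reason that arcs of these curves close up into something disjoint from $\gamma$; you would have to prove, configuration by configuration, that the arcs can be confined to a region avoiding $\gamma$, which is precisely the missing combinatorial heart. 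The argument in \cite{chicken} is both cleaner and stronger: it never introduces $\delta$, but shows directly that after the rectifying isotopies any single curve $\beta$ disjoint from $T_\gamma(\alpha)$ has the property that the arcs of $\beta\setminus\alpha$ cover the arcs of a curve filling with $\alpha$, so $\beta$ and $\alpha$ fill and $d(\beta,\alpha)\ge 3$, whence every path from $T_\gamma(\alpha)$ to $\alpha$ has length at least $4$ (this single-neighbour statement is what the present paper records and reuses as Proposition \ref{prop:geq4}, and it holds already when $d(\alpha,\gamma)\ge 3$). To complete your proposal you must either execute the two-curve analysis you gesture at, or replace it by this one-curve filling argument.
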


\begin{prop}
\label{prop:geq4}
Let $S_g$ be a closed surface of genus $g \geq 2$. Let $\alpha$ and $\gamma$ be two curves on $S_g$ with $d(\alpha, \gamma) \geq 3$. Then, $d(\alpha, T_\gamma(\alpha)) \geq 4$.
\end{prop}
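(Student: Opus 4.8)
The plan is to deduce Proposition \ref{prop:geq4} from Theorem \ref{thm:chckn} by a short reduction, rather than re-running the surgery argument of \cite{chicken}. The only gap between the two statements is that Theorem \ref{thm:chckn} assumes $d(\alpha,\gamma)=3$ exactly, whereas here we only know $d(\alpha,\gamma)\geq 3$; so the first step is to handle the case $d(\alpha,\gamma)\geq 4$ separately and then invoke the theorem verbatim when $d(\alpha,\gamma)=3$.

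First I would record the standard fact that a Dehn twist $T_\gamma$ is an isometry of $\cc$ (it is an element of the mapping class group, which acts on $\cc$ by simplicial automorphisms preserving $d$). Hence $d(T_\gamma(\alpha),\gamma)=d(T_\gamma(\alpha),T_\gamma(\gamma))=d(\alpha,\gamma)$. Now suppose $d(\alpha,\gamma)\geq 4$. By the triangle inequality, $d(\alpha,T_\gamma(\alpha))\geq d(\alpha,\gamma)-d(\gamma,T_\gamma(\alpha))$ is not directly useful; instead use it the other way: $d(\alpha,\gamma)\leq d(\alpha,T_\gamma(\alpha))+d(T_\gamma(\alpha),\gamma)=d(\alpha,T_\gamma(\alpha))+d(\alpha,\gamma)$, which is vacuous. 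So the triangle inequality alone does not suffice, and I instead argue: if $d(\alpha,\gamma)\geq 4$ then $\alpha$ and $\gamma$ fill $S_g$ (curves at distance $\geq 3$ fill), and $T_\gamma(\alpha)\neq\alpha$; more to the point, the case $d(\alpha,\gamma)\geq 4$ can be reduced to the case $d(\alpha,\gamma)=3$ by choosing an intermediate curve. Concretely, pick a geodesic $\alpha=c_0,c_1,\dots,c_m=\gamma$ with $m=d(\alpha,\gamma)\geq 4$, and set $\beta=c_{m-3}$, so $d(\beta,\gamma)=3$. Then $T_\gamma$ being an isometry gives $d(T_\gamma(\beta),\beta)=4$ by Theorem \ref{thm:chckn}. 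This does not immediately bound $d(\alpha,T_\gamma(\alpha))$, so this naive reduction also needs care; the honest route is to apply the argument of Theorem \ref{thm:chckn} directly.

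Therefore the real content is: the proof of Theorem \ref{thm:chckn} in \cite{chicken}, when one tracks where the hypothesis $d(\alpha,\gamma)=3$ is used, only ever uses the inequality $d(\alpha,\gamma)\geq 3$ to establish the lower bound $d(\alpha,T_\gamma(\alpha))\geq 4$ (the matching upper bound $d(\alpha,T_\gamma(\alpha))\leq 4$ is what genuinely needs $d(\alpha,\gamma)=3$, via exhibiting an explicit path of length $4$). So I would state that the key inputs from \cite{chicken} are: (i) $\alpha$ and $\gamma$ fill $S_g$, giving the amenable-to-Dehn-twist special position of Definition \ref{def:amenable}; (ii) for any $\delta\in B_1(T_\gamma(\alpha))$ one has $i(\delta,\gamma)\geq 4$ (since $d(\delta,\gamma)\geq 3$ by the isometry property and $i_{min}(g,3)\geq 4$ from \cite{AH1}), so $\delta$ has at least four strands in $R_\gamma$ and may be put in rectified position; and (iii) the combinatorial argument showing no curve disjoint from both $\alpha$ and such a $\delta$ can exist, i.e.\ $d(\alpha,\delta)\geq 3$, hence $d(\alpha,T_\gamma(\alpha))\geq 4$. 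None of (i)–(iii) uses the upper bound $d(\alpha,\gamma)\leq 3$.

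The main obstacle — and the thing to be careful about — is precisely the bookkeeping in step (iii): one must be sure that in \cite{chicken} the derivation of $d(\alpha,\delta)\geq 3$ for every $\delta\in B_1(T_\gamma(\alpha))$ nowhere secretly invokes $d(\alpha,\gamma)=3$ (for instance in counting components of $S_g\setminus(\alpha\cup\gamma)$ or in the rectification isotopies $I_1,I_2,I_3$). The excerpt already flags that ``the isotopies $I_1$, $I_2$ and $I_3$ in \cite{chicken} described for curves $\gamma \in B_1(a_3)$ can also be applied to $\delta$ because they only use the fact that $d(a_0, a_3) \geq 3$,'' which is exactly the hook I need. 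So the write-up is: cite Theorem \ref{thm:chckn}; observe $T_\gamma$ is an isometry so $d(T_\gamma(\alpha),\gamma)=d(\alpha,\gamma)\geq 3$; note that for any $\delta\in B_1(T_\gamma(\alpha))$, $d(\delta,\gamma)\geq 2$ and in fact, running the argument of Theorem \ref{thm:chckn} of \cite{chicken} with ``$a_3$'' replaced by $\gamma$ and using only $d(\alpha,\gamma)\geq 3$, one gets $d(\alpha,\delta)\geq 3$; conclude $d(\alpha,T_\gamma(\alpha))\geq 4$. I expect the proof to be no more than a paragraph, essentially a pointer to \cite{chicken} with the single observation that only $d(\alpha,\gamma)\geq 3$ was ever needed for the lower bound.
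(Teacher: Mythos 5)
Your final argument is essentially the paper's own proof: since $d(\alpha,\gamma)\geq 3$ the pair fills, and the argument of Theorem \ref{thm:chckn} in \cite{chicken} (which only uses the filling hypothesis, i.e.\ $d(\alpha,\gamma)\geq 3$) shows every $\delta\in B_1(T_\gamma(\alpha))$ fills with $\alpha$, so $d(\alpha,\delta)\geq 3$ and hence $d(\alpha,T_\gamma(\alpha))\geq 4$. The preliminary detours (the isometry/triangle-inequality reduction you correctly discard) are unnecessary but harmless; the write-up you settle on matches the paper.
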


\begin{proof}
Since $d(\alpha, \gamma) \geq 3$, $\alpha$ and $\gamma$ form a filling pair of curves on $S_g$. Let $\delta \in B_1(T_\gamma(\alpha))$. In the proof of the above theorem \ref{thm:chckn} in \cite{chicken}, it has been shown that if $\alpha$ and $\gamma$ fill $S_g$ then $\delta$ and $\alpha$ fill $S_g$. This implies that $d(\delta, \alpha) \geq 3$. Since $\delta$ is arbitrary, any path from $T_\gamma(\alpha)$ to $\alpha$ will be of length at least $4$. Thus, $d(a, T_\gamma(\alpha)) \geq 4$.
\end{proof}

We have that $a_0$, $a_1$, $a_2$, $a_3 = T_{a_4}(a_3)$, $T_{a_4}(a_2)$, $T_{a_4}(a_1)$, $T_{a_4}(a_0)$ is a path of length $6$ in $\cc$. Existence of a path of length $6$ between $\taf$, $a_0$ and proposition \ref{prop:geq4} gives that $$4 \leq d(a_0, \taf) \leq 6.$$  

Let $\alpha$ and $\beta$ be a filling pair of curves on $S_g$. Let $\beta' \in \beta \setminus \alpha$. Let $D$ be the polygonal disc obtained by gluing the two components of $S_g \setminus (\alpha \cup \beta)$ along $\beta'$. Let $b_1$ be an arc in $D$ such that $b_1$ and $\beta'$ have their end points on the same arcs of $\alpha \cap D$. We say that $b_1$ covers $\beta'$ if $b_1$ is isotopic to $\beta'$ by an isotopy of arcs in $D$ having end points on the same arcs of $\alpha \cap D$ as $\beta'$ and $b_1$.  Let $\mathcal{A}$ be a non-empty set of essential arcs on $S_g \setminus \alpha$ such that the end points of every arc in $\mathcal{A}$ lies on the boundary. We call $\mathcal{A}$ a \textit{filling system} of arcs of $S_g \setminus \alpha$ if the components of $(S_g \setminus \alpha) \setminus \mathcal{A}$ are discs.

\begin{lem}
\label{lem:cover}
Let $\alpha$ and $\beta$ be a pair of filling curves on $S_g$. Let $i(\alpha, \beta) =n$ and the components of $\beta \setminus \alpha$ be $\{\beta_i : 1 \leq i \leq n\}$. Let $\Gamma$ be a non-empty set of essential arcs on $S_g \setminus \alpha$. If for every $i \in \{1, \dots, n\}$, there exists $g_i \in \Gamma$ such that $g_i$ covers $\beta_i$, then $\Gamma$ is a filling system of arcs of $S_g \setminus \alpha$.
\end{lem}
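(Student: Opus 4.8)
Write $X:=S_g\setminus\alpha$. The plan is to reduce everything to the single fact that $\{\beta_1,\dots,\beta_n\}$ is itself a filling system of $X$: this is immediate, since $\alpha$ and $\beta$ fill $S_g$, so that $S_g\setminus(\alpha\cup\beta)=X\setminus\bigcup_i\beta_i$ is a union of discs. I would then argue by contradiction. If $\Gamma$ is \emph{not} a filling system of $X$, some component $W$ of $X\setminus\bigcup\Gamma$ is not a disc; then $\pi_1(W)\neq 1$, so $W$ carries a simple closed curve $\gamma$ that is not null-homotopic in $W$, and we may take $\gamma$ in the interior of $W$. The aim is to show that $\gamma$ nonetheless bounds an embedded disc lying inside $W$, which is the desired contradiction.

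The bridge to the $\beta_i$ is obtained by unwinding the definition of ``covers''. First I would verify that if $g_i$ covers $\beta_i$, then $g_i$ and $\beta_i$ are isotopic as arcs in $X$ (with endpoints free to slide along $\partial X$): the isotopy provided by the definition lives in the disc $D$ obtained by regluing the complementary pieces along $\beta_i$, that is, in $X$ cut along $\{\beta_j:j\neq i\}$, and is carried out through arcs that are properly embedded in $D$ with interiors disjoint from $\partial D$; hence it descends under the regluing map $D\to X$ to an isotopy in $X$. Consequently $i(\gamma,g_i)=i(\gamma,\beta_i)$ for every simple closed curve $\gamma$ on $X$. Returning to the contradiction: since $\gamma\subset W$ is disjoint from $\bigcup\Gamma\supseteq\bigcup_i g_i$, we get $i(\gamma,g_i)=0$, hence $i(\gamma,\beta_i)=0$ for every $i$; as the $\beta_i$ are pairwise disjoint this forces $i(\gamma,\bigcup_i\beta_i)=0$, so $\gamma$ can be isotoped into a single complementary region of $X\setminus\bigcup_i\beta_i$, which is a disc, and therefore $\gamma$ is null-homotopic in $X$. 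Thus $\gamma$ bounds an embedded disc $\Delta\subset X$; after a standard innermost-disc isotopy rel $\partial\Delta$ (legitimate because $\partial\Delta=\gamma$ already misses $\bigcup\Gamma$ and every component of the $1$-complex $\bigcup\Gamma$ meets $\partial X$), we may assume $\Delta\cap\bigcup\Gamma=\emptyset$, so $\Delta\subset X\setminus\bigcup\Gamma$. Being connected and containing $\gamma\subset W$, the disc $\Delta$ lies in $W$; but then $\gamma$ bounds a disc inside $W$, contradicting the choice of $\gamma$. Hence every component of $X\setminus\bigcup\Gamma$ is a disc, that is, $\Gamma$ is a filling system of $X$.

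The step I expect to be the main obstacle is making the notion of ``covers'' yield a bona fide isotopy in $X$ — in particular keeping the arcs embedded as the isotopy is transported through the regluing map $D\to X$ — together with the routine but slightly fiddly innermost-disc argument that pushes $\Delta$ off $\bigcup\Gamma$. The remaining ingredients are standard: that intersection number with a disjoint union of arcs is computed component by component, and that a null-homotopic simple closed curve on a surface bounds an embedded disc.
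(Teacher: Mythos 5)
Your proof is correct, but it takes a genuinely different and less direct route than the paper's. The paper's own argument is two lines: since each $g_i$ covers $\beta_i$, cutting $S_g\setminus\alpha$ along $\bigcup_i g_i$ yields the same complementary components (up to the isotopy implicit in ``covers'') as cutting along $\beta\setminus\alpha$, and these are discs because $\alpha$ and $\beta$ fill; since $\{g_i\}_{i}\subset\Gamma$, cutting further along the remaining arcs of $\Gamma$ only subdivides discs into discs. You instead argue by contradiction with a simple closed curve $\gamma$ essential in a non-disc component $W$, use the isotopy $g_i\simeq\beta_i$ only through $i(\gamma,\beta_i)=i(\gamma,g_i)=0$, isotope $\gamma$ off the disjoint arc system $\beta\setminus\alpha$ (simultaneous minimal position), conclude $\gamma$ is null-homotopic in $S_g\setminus\alpha$, and transport the bounding disc $\Delta$ into $W$. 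Both proofs hinge on exactly the point you flag as the main obstacle, namely that ``covers'' gives a proper isotopy from $g_i$ to $\beta_i$ in $S_g\setminus\alpha$ (the map from the glued disc is injective on its interior, so the isotopy descends); the paper spends this observation once to identify complementary components outright, while your route pays for isotopy-invariance with extra standard machinery (simultaneous minimal position via the bigon criterion, and the fact that a null-homotopic simple closed curve bounds an embedded disc). Two small remarks: your innermost-disc step is unnecessary and slightly misphrased --- since every arc of $\Gamma$ has its endpoints on the boundary of $S_g\setminus\alpha$ and $\Delta$ misses that boundary, any arc of $\Gamma$ meeting $\Delta$ would have to cross $\partial\Delta=\gamma$, which is impossible, so $\Delta\cap\bigcup\Gamma=\emptyset$ automatically (an isotopy rel $\partial\Delta$ could not in general remove such intersections anyway); and the simultaneous minimal-position fact for the disjoint system $\{\beta_i\}$ is asserted rather than justified, though it is standard.
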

\begin{proof}
Consider the components of $(S_g \setminus \alpha) \setminus \cup \{g_i\}_{1 \leq i \leq n}$. These components coincide with the components of $(S_g \setminus \alpha) \setminus \beta$ and hence, are discs. Since $\cup_{1 \leq i \leq n} \{g_i\} \subset \Gamma$, the components of $(S_g \setminus \alpha) \setminus \Gamma$ are also discs.
\end{proof}

Let $D$ be a component of $S_g \setminus (a_0 \cup a_4)$. Let $T_p$, $T_q$ be top buckets in $R_{a_4}$ for some $p, q \in K$, $p < q$ such that $(T_p \cup T_q) \subset D$. Let $\gamma''$ be an arc in $\cup_{i = p+1}^{q-1}T_i$ parallel to $a_4$ with end points on $a_0^{p+1} \cap T_{p+1}$ and $a_0^q \cap T_{q-1}$. Let $\gamma'$ be an arc in the interior of $D$ with end points $(\gamma'' \cap a_0) \cap D$. Let $\gamma$ be the curve obtained by concatenation of the arcs $\gamma'$ and $\gamma''$. A schematic of $\gamma$ is shown in figure \ref{fig:gamma_fill}. We call $\gamma$ a \textit{scaling curve from $T_p$ to $T_q$}. Since $R_{a_4}$ is a cylinder, we can similarly define a \textit{scaling curve from $T_q$ to $T_p$} as follows. Let $\gamma_1''$ be an arc in $\cup_{i = q+1}^{p-1}T_i$ parallel to $a_4$ with end points on $a_0^{q+1} \cap T_{q+1}$ and $a_0^p \cap T_{p-1}$. Let $\gamma_1'$ be an arc in the interior of $D$ with end points $(\gamma_1'' \cap a_0) \cap D$. Then the curve obtained by concatenation of the arcs $\gamma'$ and $\gamma''$ is a scaling curve from $T_q$ to $T_p$. By replacing top buckets with their bottom buckets counterpart, we can define \textit{scaling curve from $B_p$ to $B_q$} and \textit{$B_q$ to $B_p$}. 

\begin{figure}
\centering
\includegraphics[scale=0.33]{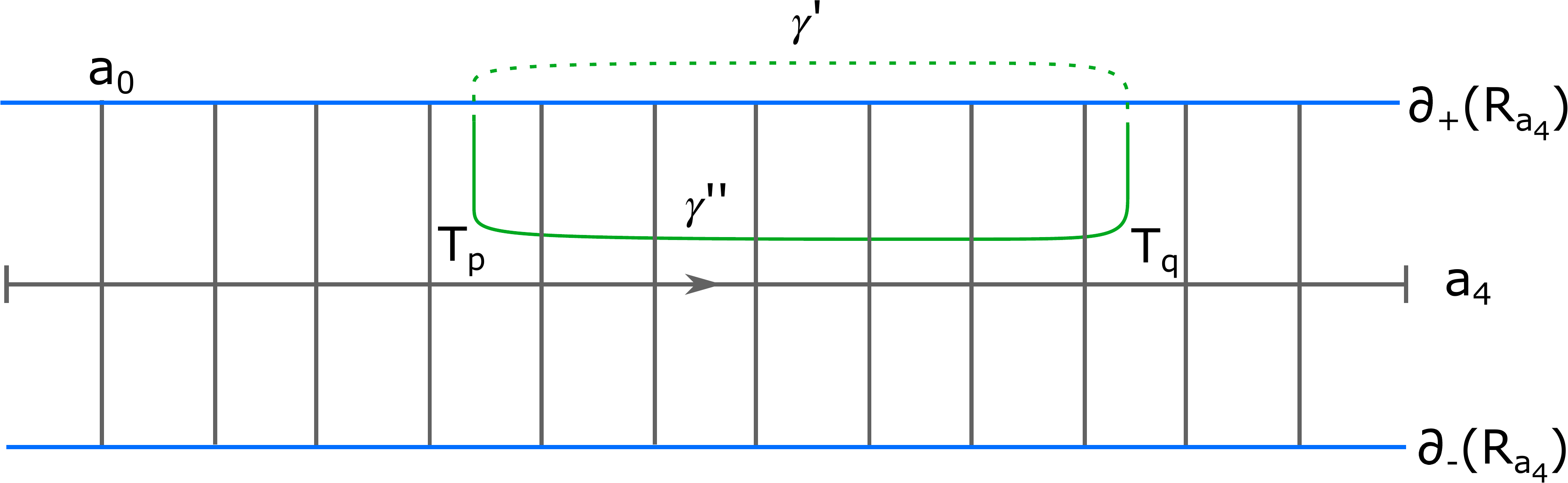}
\caption{A schematic of the scaling curve $\gamma$ from $T_p$ to $T_q$. The dashed arc is a schematic of $\gamma'$.}
\label{fig:gamma_fill}
\end{figure}

\begin{lem}
\label{lem:gamma_nonnull}
Scaling curves are not null-homotopic. 
\end{lem}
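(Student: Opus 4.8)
The plan is to show the stronger statement $i(\gamma,a_0)>0$; this suffices because a null-homotopic simple closed curve has zero geometric intersection number with every curve on $S_g$. Two structural observations set things up. First, $\gamma$ is disjoint from $a_4$: the arc $\gamma'$ lies in the interior of a single complementary disc of $a_0\cup a_4$, and $\gamma''$ lies in the top buckets $T_{p+1},\dots ,T_{q-1}$, which sit inside $R_{a_4}$ on the $\partial_+(R_{a_4})$-side of $a_4$. Second, with the representatives from the construction, $\gamma\cap a_0$ consists precisely of the two endpoints of $\gamma''$ (one on $a_0^{p+1}$, one on $a_0^{q}$) together with one transverse point on each intermediate strand $a_0^{p+2},\dots ,a_0^{q-1}$, while $\mathrm{int}(\gamma')$ is disjoint from $a_0\cup a_4$; moreover, since $T_p,T_q\subset D$ while, by construction, none of $T_{p+1},\dots ,T_{q-1}$ lies in $D$, we also have $\mathrm{int}(\gamma'')\cap\overline D=\varnothing$.

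Now suppose $i(\gamma,a_0)<|\gamma\cap a_0|$. By the bigon criterion there is an innermost bigon $B$ between $\gamma$ and $a_0$: an embedded disc with $\partial B=\sigma\cup\tau$, $\sigma\subset\gamma$, $\tau\subset a_0$, and $\mathrm{int}(B)\cap(\gamma\cup a_0)=\varnothing$. First I would show $B\cap a_4=\varnothing$: no component of $B\cap a_4$ is a circle, since such a circle would be all of $a_4$ and would bound a disc in $S_g$, contradicting that $a_4$ is essential; and since $\gamma$ misses $a_4$, every arc of $B\cap a_4$ has its endpoints on $\tau$, so an innermost such arc cuts out of $B$ a bigon between $a_0$ and $a_4$, contradicting that $a_0$ and $a_4$ are in minimal position. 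Hence $\mathrm{int}(B)\subset S_g\setminus(a_0\cup a_4)$, so $B$ lies in the closure of a single complementary disc $F$. The arc $\sigma$ joins two consecutive points of $\gamma\cap a_0$, so either $\sigma=\gamma'$, forcing $F=D$, or $\sigma$ is a sub-arc of $\gamma''$ lying in one intermediate bucket $T_j$, forcing $F$ to be the component containing $T_j$, which is not $D$. In either case $\sigma$ is isotopic rel endpoints inside $\overline F$ to $\tau$, so one of the two arcs of $\partial F$ cut off by the endpoints of $\sigma$ is a single edge of $a_0$; but going from one endpoint of $\sigma$ along $\partial F$ one reaches a vertex of $a_0\cup a_4$ and turns onto an $a_4$-edge — the $a_4$-side of the bucket adjacent to that endpoint, which lies on $\partial F$ — so $\tau$ must be the other arc, forcing $\tau$ to be an edge of $a_0$ joining directly, through $S_g\setminus R_{a_4}$, the two half-strands that carry the endpoints of $\sigma$.

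Excluding such an edge of $a_0$ is the main obstacle. When this edge, together with the $a_4$-side of the adjacent bucket, does not wind around $R_{a_4}$, the two arcs bound a disc whose interior misses $a_0\cup a_4$, i.e. a bigon between $a_0$ and $a_4$, contradicting minimal position; the remaining case, in which the edge winds around $R_{a_4}$, should be ruled out by a direct inspection of the complementary components adjacent to the sides of $T_p,\dots ,T_q$, using the placement of these buckets relative to $D$. That contradiction gives $i(\gamma,a_0)=|\gamma\cap a_0|\ge 2>0$, so $\gamma$ is not null-homotopic. I expect this winding case to be the only delicate point; the rest is a routine innermost-disc argument. One could also organise the proof by cutting $S_g$ along $a_4$ — which is incompressible, so it is enough to see that $\gamma$ is essential there — and running the same bigon analysis against the arc system $a_0\cap(S_g\setminus R_{a_4})$, which fills that surface.
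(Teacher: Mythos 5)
Your strategy is to rule out \emph{every} bigon between $\gamma$ and $a_0$ and conclude $i(\gamma,a_0)=|\gamma\cap a_0|$, but this is strictly stronger than the lemma and is false in general, and the failure occurs exactly at the case you defer. When the innermost bigon has $\sigma=\gamma'$ (the turning arc inside $D$), the alternation of $a_0$- and $a_4$-edges on $\partial D$ does force $\tau$ to lie in a single edge $E$ of $a_0\setminus a_4$ running from $w_{p+1}$ over the top of $R_{a_4}$ to $w_q$; but the region cut off by $E$ together with the parallel portion of $a_4$ is \emph{not} a bigon between $a_0$ and $a_4$ unless $q=p+2$, because its interior is crossed by the intermediate strands $a_0^{p+2},\dots,a_0^{q-1}$. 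So there is no contradiction to extract: such a bigon between $\gamma'$ and $a_0$ can genuinely exist (a scaling curve need not be in minimal position with $a_0$), and removing it simply produces a scaling curve from $T_{p+1}$ to $T_{q-1}$. This is precisely why the paper's proof is iterative: it removes these bigons one at a time, shrinking the pair of buckets, and shows the process must terminate before exhausting the buckets --- at the last step one gets either adjacent top buckets in the same complementary disc (forcing a self-intersection of $a_0$) or a genuine $a_0$--$a_4$ bigon, both absurd --- so the minimal representative is still a scaling curve and still meets $a_0$. Your proposal contains no such termination argument; the sentence ``the remaining case \dots should be ruled out by a direct inspection'' is exactly the missing content, and the dichotomy you frame as ``winding around $R_{a_4}$'' does not capture it (the obstruction is the intermediate strands of $a_0$ inside the would-be bigon, not winding).

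Two smaller points: your claim that ``by construction, none of $T_{p+1},\dots,T_{q-1}$ lies in $D$'' is not part of the definition of a scaling curve and is unjustified (corollary \ref{cor:separation} comes later and would not give it either), though nothing essential hinges on it; and your case where $\sigma$ is a subarc of $\gamma''$ inside a single bucket is handled correctly and agrees with the paper's observation that a bigon avoiding $a_0\cap a_4$ on its $a_0$-side would otherwise yield a forbidden $a_0$--$a_4$ bigon. To repair the proof along your lines you would have to replace ``no bigons exist'' by ``bigon removal terminates at a nontrivial scaling curve,'' which is the paper's argument.
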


\begin{proof}
We prove the lemma when $\gamma$ is a scaling curve from a top bucket $T_p$ to $T_q$, $p < q$ and $(T_p \cup T_q) \subset D$ for some component $D$ of $S_g \setminus (a_0 \cup a_4)$. A similar proof follows if $\gamma$ is a scaling curve from a bottom bucket $B_p$ to $B_q$, $p < q$ by replacing $T_p$, $T_q$ with $B_p$, $B_q$, respectively, in the proof below. Similar proofs work for scaling curves from $T_q$ to $T_p$ and $B_q$ to $B_p$. We show that $\gamma$ is not null-homotopic by considering a minimal representative of $\gamma$ along with $a_0$ and showing that this representative has non-zero intersections with $a_0$. We obtain this minimal representative of $\gamma$ and $a_0$ by removing bigons in iterations.

Suppose if possible that $\gamma$ and $a_0$ are not in minimal position. Since there exists an isotopic copy of $\gamma$ such that $\gamma''$ overlaps with $a_{4_{[w_{p+1}, w_{q-1}]}}$, if $\gamma$ and $a_0$ are not in minimal position then a bigon is formed by $\gamma'$ and a subarc of $a_0$. This subarc of $a_0$ is a component of $a_0 \setminus a_4$ because otherwise, if there is a point of $a_0 \cap a_4$ on the boundary of this bigon then as $\gamma \cap a_4 = \phi$ we get a bigon between $a_0$ and $a_4$ which contradicts the minimality of $a_0$, $a_4$. The closed component of $a_0 \setminus a_4$ that contains this subarc also contains the arcs $T_p \cap a_{0_{[u_{p+1}, w_{p+1}]}}$ and $T_q \cap a_{0_{[u_{q-1}, w_{q-1}]}}$. Thus, $(T_{p+1} \cup T_{q-1}) \subset D_1$ for some component $D_1$ of $S_g \setminus (a_0 \cup a_4)$. We remove this bigon between $\gamma$ and $a_0$ to obtain an isotopic copy of $\gamma$. This isotopic copy of $\gamma$ is in turn a scaling curve from $T_{p+1}$ to $T_{q-1}$. By abuse of notation, we denote this isotopic copy as $\gamma$. 

If we have that $\gamma$ is not in minimal position with $a_0$, then by similar arguments as in the previous paragraph, $T_{p+1} \cap a_{0_{[u_{p+2}, w_{p+2}]}}$ and $T_{q-1} \cap a_{0_{[u_{q-2}, w_{q-2}]}}$ are contained in the same closed component of $a_0 \setminus a_4$. Thus, we have that $D_1$ is a rectangle. As previously, we remove this bigon between $\gamma$ and $a_0$ and consider denote the new isotopic copy which is also a scaling curve from $T_{p+2}$ to $T_{q-2}$ by $\gamma$. Further, we have that $(T_{p+2} \cup T_{q-2}) \subset D_2$ for some component $D_2$ of $S_g \setminus (a_0 \cup a_4)$.

Continuing in a similar iterative manner as in the above paragraphs, if $\gamma$ and $a_0$ are not in minimal intersection position, then we claim that there is a positive integer $l$ with $l < \lceil \frac{q-p}{2} \rceil -1$ such that 
\begin{enumerate}
	\item $T_{p+l} \cup T_{q-l}$ is contained in the one component $D_l$ of $S_g \setminus (a_0 \cup a_4)$
	\item $T_{p+l} \cap a_4$ and $T_{q-l} \cap a_4$ are separated by a single edge corresponding to $a_0$ in $D_l$
\end{enumerate}
The fact that there exists $l$ with $l \leq \lceil \frac{q-p}{2} \rceil -1$ is immediate as there are at most $\lceil \frac{q-p}{2} \rceil$ pairs of buckets of the form $T_{p+i}$ and $T_{q-i}$ between $T_p$ and $T_q$ in $R_{a_4}$. We first show that if we assume $l= \lceil \frac{q-p}{2} \rceil -1$ along with hypothesis $(1)$ and $(2)$ then we arrive at the following contradictions. If $q-p$ is odd then we have that $T_{p+l}$ and $T_{q-l}$ are adjacent top buckets. But if $T_{p+l}$ and $T_{q-l}$ are adjacent top buckets then $a_0$ has a self intersection, which is absurd. If $q-p$ is even then $p+l+2 = q-l$. But then the $a_0 \cap D_l$ arc containing the end points $w_{p+l+1}$ and $w_{q-l}$ encloses a disc with $a_{4_{[w_{p+l+1}, w_{q-l}]}}$, thus giving a bigon between $a_0$ and $a_4$. This contradicts that $a_0$ and $a_4$ are in minimal position.

We thus have that the scaling curve from $T_{p+l}$ to $T_{q-l}$ intersects $a_0$ minimally and is isotopic to the given $\gamma$.

\end{proof}

As in the proof of lemma \ref{lem:gamma_nonnull}, whenever we consider a scaling curve we will work with an isotopic copy of it which is in minimal position with $a_0$, $a_4$ and $\taf$.

\begin{cor}
\label{lem:gamma_fill}
Scaling curves fill with $a_0$.   
\end{cor}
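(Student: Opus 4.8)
The plan is to leverage Lemma \ref{lem:cover}: to show that a scaling curve $\gamma$ fills $S_g$ together with $a_0$, it suffices to exhibit, for each component $\beta_j$ of $a_4 \setminus a_0$, an essential arc on $S_g \setminus a_0$ drawn from (arcs of) $\gamma$ that covers $\beta_j$ in the polygonal disc picture. Here the roles of $\alpha$ and $\beta$ in Lemma \ref{lem:cover} are played by $a_0$ and $a_4$: since $a_0$ and $a_4$ fill $S_g$ (they are endpoints of a geodesic of length $4$, so $d(a_0,a_4)\geq 3$), the components of $(S_g\setminus a_0)\setminus a_4$ are discs, and it is enough to produce a set $\Gamma$ of essential arcs on $S_g\setminus a_0$ each covering one of the arcs $\beta_j$. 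I will treat the case of a scaling curve $\gamma$ from a top bucket $T_p$ to $T_q$ with $p<q$ and $(T_p\cup T_q)\subset D$; the remaining cases (from $T_q$ to $T_p$, and the bottom-bucket analogues) follow by the same argument after relabelling, exactly as in the proof of Lemma \ref{lem:gamma_nonnull}.

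First I would recall that $\gamma$ is the concatenation of $\gamma''$, an arc parallel to $a_4$ running through $\cup_{i=p+1}^{q-1}T_i$, and $\gamma'$, an arc in the interior of $D$. The arc $\gamma''$ is isotopic, rel the component of $a_0\setminus a_4$ on which it starts and ends, to the subarc $a_{4_{[w_{p+1},w_{q-1}]}}$ of $a_4$; hence $\gamma''$ (extended slightly into $R_{a_4}$ so its endpoints lie on $a_0$) covers, one after another, each of the arcs $\beta_j$ of $a_4\setminus a_0$ lying in $a_{4_{[w_{p+1},w_{q-1}]}}$. This accounts for all components of $a_4\setminus a_0$ except those meeting the "short" complementary arc $a_{4_{[w_{q-1},w_{p+1}]}}$ that passes through the buckets $T_q, T_{q+1},\dots,T_p$ (cyclically). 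For these remaining $\beta_j$'s I would instead use $a_4$ itself: since $a_0$ and $a_4$ already fill, the full set $\{\beta_j\}$ of all components of $a_4\setminus a_0$ is a filling system; but I only need the covering hypothesis of Lemma \ref{lem:cover}, so I simply take $\Gamma = \{\gamma''\}\cup\{\beta_j : \beta_j \text{ meets } a_{4_{[w_{q-1},w_{p+1}]}}\}$, noting each element of $\Gamma$ either is, or covers, a distinct $\beta_j$, and every $\beta_j$ is covered. Actually, the cleaner route: apply Lemma \ref{lem:cover} with $\Gamma$ consisting of $\gamma''$ together with those $a_4$-subarcs not covered by $\gamma''$, conclude $\Gamma$ is a filling system of $S_g\setminus a_0$, and then observe that replacing $\gamma''$ by $\gamma$ (which contains $\gamma''$) keeps the complement a union of discs, so $\{\gamma\}\cup(\Gamma\setminus\{\gamma''\})$ is filling — but since $\gamma$ is a single curve whose arcs on $S_g\setminus a_0$ include $\gamma''$, and the other $\beta_j$'s are arcs of $a_4$ which is disjoint from $\gamma$ except along $a_0$...

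Let me streamline. The honest argument is: $\gamma''$ covers every $\beta_j\subset a_{4_{[w_{p+1},w_{q-1}]}}$, so $(S_g\setminus a_0)\setminus\big(\gamma''\cup a_{4_{[w_{q-1},w_{p+1}]}}\big)$ has the same components as $(S_g\setminus a_0)\setminus a_4$, namely discs; therefore $\Gamma_0:=\{\gamma''\}\cup\{\text{components of }a_4\setminus a_0\text{ in }a_{4_{[w_{q-1},w_{p+1}]}}\}$ is a filling system of $S_g\setminus a_0$. Cutting $S_g$ along $a_0\cup\gamma$ is a finer subdivision than cutting along $a_0\cup\Gamma_0$ restricted away from the extra $a_4$-arcs — more precisely, $\gamma$ and $a_0$ are disjoint from those extra $a_4$-arcs only up to the finitely many points of $a_0\cap a_4$, so one checks directly that $S_g\setminus(a_0\cup\gamma)$ is obtained from discs by further cutting and regluing and remains a union of discs. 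The one genuine subtlety — and the step I expect to be the main obstacle — is verifying that the complementary regions of $a_0\cup\gamma$ really are discs and not annuli: this is precisely where I must use Lemma \ref{lem:gamma_nonnull} and the fact (already arranged, as in the preceding paragraphs of the excerpt) that the representative of $\gamma$ is in minimal position with $a_0$ and $a_4$, so that no complementary region is a bigon or a once-punctured annulus. I would dispatch this by an Euler characteristic / Poincaré–Hopf count, or simply by the clean observation that $\gamma''$ covers a filling subset of the $\beta_j$'s and invoking Lemma \ref{lem:cover} directly with $\Gamma = \Gamma_0$ after noting $\gamma''\subset\gamma$ so that $\gamma$ a fortiori fills with $a_0$. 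Thus $\gamma$ and $a_0$ fill $S_g$, which in particular (again using $i_{min}$-type bounds, or just that filling curves are at distance $\geq 3$) recovers what is needed downstream.
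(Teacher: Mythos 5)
There is a genuine gap, and it sits exactly where you flagged the ``main obstacle''. Lemma \ref{lem:cover} concludes that the set of arcs you feed it is a filling system; to deduce that $\gamma$ fills with $a_0$ you would need that set to consist of arcs of $\gamma \setminus a_0$. Your $\Gamma_0$ does not: it contains the components of $a_4 \setminus a_0$ lying between $w_q$ and $w_{p+1}$, which are arcs of $a_4$, not of $\gamma$, and $\gamma \setminus a_0$ does not cover them --- besides the sub-arcs of $\gamma''$ it has only the single extra arc $\gamma'$ inside $D$, so in general several arcs of $a_4 \setminus a_0$ are left uncovered (which also rules out falling back on Lemma \ref{lem:self_gluing} or Lemma \ref{lem:almost_fill}, both of which tolerate at most one uncovered arc). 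The passage from ``$\Gamma_0$ is a filling system'' to ``$\gamma$ fills with $a_0$'' via ``finer subdivision'' or ``a fortiori'' is backwards: cutting along $a_0 \cup \gamma$ is not a refinement of cutting along $a_0 \cup \Gamma_0$, since the uncovered $a_4$-arcs are simply absent from $\gamma$; dropping them can only enlarge complementary regions, and nothing in Lemma \ref{lem:cover} tells you that adding $\gamma'$ restores disc components. An annular component of $S_g \setminus (a_0 \cup \gamma)$ is precisely what you have not excluded, and the Euler-characteristic remark does not exclude it either. A symptom of the problem is that your argument only uses that $a_0$ and $a_4$ fill, i.e.\ $d(a_0,a_4) \geq 3$, whereas the statement genuinely needs $d(a_0,a_4)=4$: a curve disjoint from $a_4$ has no reason to fill with $a_0$ when $d(a_0,a_4)=3$.

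The paper's proof is one line and of a different nature: by construction $\gamma \cap a_4 = \emptyset$ and $\gamma$ is essential (Lemma \ref{lem:gamma_nonnull}), so $d(\gamma, a_4) \leq 1$; since $d(a_0, a_4) = 4$, the triangle inequality gives $d(a_0, \gamma) \geq 3$, and two curves at distance at least $3$ must fill $S_g$ (otherwise a non-disc complementary component would yield a curve disjoint from both, forcing distance $\leq 2$). If you want a purely combinatorial proof along your lines, you would have to analyse the complement of $a_0 \cup \gamma$ directly and rule out annular pieces, which is exactly the step your proposal leaves open.
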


\begin{proof}
From the construction of a scaling curve, $\gamma$, $\gamma \cap a_4 = \phi$ and hence $d(\gamma, a_0) \geq 3$. Thus, $a_0$ and $\gamma$ fill $S_g$.
\end{proof}

\begin{rem}
If $a_0$ and $a_4$ intersect $i_{min}(g, 4)$ number of times then any scaling curve are at a distance $3$ from $a_0$.
\end{rem}

The following lemma \ref{lem:self_gluing} states that for the purpose of cutting $S_g \setminus a_0$ into discs, not every arc of $a_4 \setminus a_0$ is necessary. We can forgo any one of the arcs of $a_4 \setminus a_0$.

\begin{lem}
\label{lem:self_gluing}
Let $b$ be a component of $a_4 \setminus a_0$. Then $(a_4 \setminus a_0) \setminus b$ is a filling system of arcs of $S_g \setminus a_0$.
\end{lem}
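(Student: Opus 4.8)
The plan is the following. Since $d(a_0,a_4)=4\geq 3$, the curves $a_0$ and $a_4$ fill $S_g$, so $a_4\setminus a_0$ is itself a filling system of arcs of $S_g\setminus a_0$. Write the components of $a_4\setminus a_0$ as $b_1,\dots,b_k$ (so $k=i(a_0,a_4)$) and say $b=b_i$. Cutting $S_g\setminus a_0$ along $(a_4\setminus a_0)\setminus b$ is the same operation as taking the disc faces of $S_g\setminus(a_0\cup a_4)$ and regluing them along the arc $b$. Hence everything reduces to the claim that the two faces of $S_g\setminus(a_0\cup a_4)$ lying on the two sides of $b$ are \emph{distinct}: granting this, the regluing merely glues two distinct discs along a single boundary arc, which produces a disc, and leaves every other face untouched, so the complement of $(a_4\setminus a_0)\setminus b$ in $S_g\setminus a_0$ is again a union of discs. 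Non-emptiness is clear since $k=i(a_0,a_4)\geq 2$, and each $b_j$ with $j\neq i$ is an essential arc of $S_g\setminus a_0$ because $a_0$ and $a_4$ are in minimal position (an inessential one would be cut off by a bigon between $a_0$ and $a_4$).

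To prove the claim I would argue by contradiction, and this is the only place the hypothesis $d(a_0,a_4)=4$ is used. Suppose the face of $S_g\setminus(a_0\cup a_4)$ on each side of $b=b_i$ is the same face $D$. The two rectangles of $R_{a_4}\setminus(a_0\cup a_4)$ meeting $b_i$ are exactly the buckets $T_i$ and $B_i$, so both $T_i$ and $B_i$ lie in $D$. Since $D$ is a disc, choose an embedded arc in $D$ from a point in the interior of $T_i$ to a point in the interior of $B_i$, and close it up by a short embedded arc in $T_i\cup b_i\cup B_i$ crossing $b_i$ once; let $\ell$ be the resulting simple closed curve. By construction $\ell$ is disjoint from $a_0$ (it avoids $a_0$ inside the face $D$ and inside the buckets) and meets $a_4$ transversely in the single point on $b_i$. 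As $\ell$ meets $a_4$ in one point, the algebraic intersection $[\ell]\cdot[a_4]=\pm1$, so $\ell$ is homologically nontrivial and hence essential. Then $d(a_0,\ell)\leq 1$; and since $i(\ell,a_4)=1$, a regular neighbourhood of $\ell\cup a_4$ is a once-punctured torus whose boundary curve is disjoint from both $\ell$ and $a_4$ and is essential in $S_g$ (using $g\geq 2$), which gives $d(\ell,a_4)\leq 2$. Therefore $d(a_0,a_4)\leq d(a_0,\ell)+d(\ell,a_4)\leq 3$, contradicting $d(a_0,a_4)=4$. This establishes the claim, and with it the lemma.

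The main obstacle is precisely this claim: ruling out that $b$ borders a single complementary face on both sides. Such a self-incidence is exactly the configuration whose regluing would produce an annulus rather than a disc, so it is the only way the statement could fail; the short curve $\ell$ constructed above turns this configuration into a curve disjoint from $a_0$ and meeting $a_4$ exactly once, which is too short a ``bridge'' between $a_0$ and $a_4$ to coexist with $d(a_0,a_4)=4$. Everything else — that $a_0\cup a_4$ fill, that gluing two distinct discs along a boundary arc yields a disc, and the nonemptiness and essentiality of the remaining arcs — is routine.
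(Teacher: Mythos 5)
Your proposal is correct and follows essentially the same route as the paper: the paper also reduces the lemma to showing the two faces of $S_g \setminus (a_0 \cup a_4)$ adjacent to $b$ are distinct, and rules out the degenerate case by taking the core curve of the annulus obtained from gluing the single face along $b$ (your $\ell$ is that curve), which is disjoint from $a_0$, meets $a_4$ once, and forces $d(a_0,a_4)\leq 3$. Your justification of essentiality via the algebraic intersection $[\ell]\cdot[a_4]=\pm 1$ and the explicit once-punctured-torus argument for $d(\ell,a_4)\leq 2$ merely spell out details the paper leaves terse.
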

\begin{proof} Each component of $a_4 \setminus a_0$ is common to two components of $S_g \setminus (a_0 \cup a_4)$. Let the components of $S_g \setminus (a_0 \cup a_4)$ that share the edge corresponding to $b$ be $D_1$ and $D_2$. 

We first show that $D_1 \neq D_2$. On the contrary, if $D_1 = D_2$ let $p$ be the central curve of the annulus obtained by gluing $D_1$ along $b$. Being in minimal position with $a_0$ and $a_4$, $p$ forms an essential curve on $S_g$. Since $i(p, a_0) = 0$ and $i(p, a_4) =1$, we get a path of distance $3$ between $a_0$ and $a_4$ via $p$, which is not possible. 

Call the disc obtained by gluing $D_1$ and $D_2$ along $b$ as $D$. Components of $S_g \setminus ((a_4 \setminus a_0) \setminus b)$ comprise of the components of $(S_g \setminus (a_0 \cup a_4)) \setminus (D_1 \cup D_2)$ and $D$. Since each component is a disc, it follows that $(a_4 \setminus a_0) \setminus b$ forms a filling system of $S_g \setminus a_0$.
\end{proof}

The following lemmas explain a few observations regarding the buckets in $R_{a_4}$ and the components of $S_g \setminus (a_0 \cup a_4)$ that contain them. 

From corollary \ref{lem:gamma_fill} and the fact $i_{min}(g, 3) \geq 4$ (\cite{AH1}), we have the following corollary regarding the placement in $R_{a_4}$ of the top buckets which are subset of the same disc of $S_g \setminus (a_0 \cup a_4)$. A similar version of corollary \ref{cor:separation} holds true for bottom buckets.

\begin{cor}
\label{cor:separation}
Let $D$ be a component of $S_g \setminus (a_0 \cup a_4)$ and $T_p$, $T_q$ be distinct top buckets in $R_{a_4}$ for some $p, q \in K$, $p < q$ such that $(T_p \cup T_q) \subset D$. Then $|q-p| \geq 4$.
\end{cor}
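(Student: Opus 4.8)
The plan is to argue by contradiction: suppose $T_p, T_q \subset D$ with $p < q$ and $|q - p| \leq 3$, and derive that some curve is at distance $\leq 2$ from $a_0$, or that $a_0, a_4$ fail to be in minimal position, contradicting $d(a_0, a_4) = 4$. The key tool is Lemma \ref{lem:gamma_nonnull} together with Corollary \ref{lem:gamma_fill}: a scaling curve $\gamma$ from $T_p$ to $T_q$ is a genuine (non-null-homotopic) curve that fills $S_g$ with $a_0$, hence $d(\gamma, a_0) \geq 3$. If I can show that a small separation $|q-p|$ forces $\gamma$ to be disjoint from, or to intersect only once, some curve that is itself close to $a_0$ — or forces $\gamma$ itself to be too simple — I will have the contradiction.

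First I would dispose of the trivial cases. If $q - p = 1$, then $T_p$ and $T_q = T_{p+1}$ are adjacent top buckets both lying in $D$; tracing the boundary arcs $a_0^{p}$, $a_0^{p+1}$, $a_0^{p+2}$ that bound these buckets forces $a_0$ to have a self-intersection (exactly the absurdity invoked at the end of the proof of Lemma \ref{lem:gamma_nonnull}), which is impossible. The cases $q - p = 2$ and $q - p = 3$ are the substance. Here the scaling curve $\gamma$ from $T_p$ to $T_q$ has its arc $\gamma''$ running through only one or two intermediate top buckets ($T_{p+1}$, or $T_{p+1} \cup T_{p+2}$). I would examine the curve directly: when $q - p$ is small, $\gamma$ together with $a_0$ cuts $S_g$ into pieces that cannot all be discs, so $\gamma$ does \emph{not} fill with $a_0$ after all — contradicting Corollary \ref{lem:gamma_fill} — unless $\gamma$ is forced to bound a bigon or sweep out a low-complexity subsurface. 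Concretely, when $q-p = 2$, the arc $a_{4_{[w_p, w_q]}}$ passes through the single intermediate intersection point $w_{p+1}$, and the union of $a_{4_{[w_p,w_q]}}$ with the $a_0$-arcs bounding $T_p$ and $T_q$ bounds a disc or annulus region of small complexity; I would show this forces a curve intersecting $a_4$ at most once and disjoint from $a_0$, i.e.\ a distance-$3$ path from $a_0$ to $a_4$, contradicting $d(a_0, a_4) = 4$. The $q - p = 3$ case is handled the same way, with the two intermediate points $w_{p+1}, w_{p+2}$; the region cut off is still too simple to be filled only by the restriction of $a_4$.

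The main obstacle I anticipate is making the ``too simple'' step rigorous: I need to pin down exactly which subsurface of $S_g$ is cut out by $\gamma \cup a_0$ (or by the relevant sub-configuration of $a_0 \cup a_4$) when $|q - p| \in \{2, 3\}$, and to verify it is a disc, an annulus, or a once-punctured torus — none of which can support a filling system coming from so few arcs. Here I would lean on Lemma \ref{lem:self_gluing} and Lemma \ref{lem:cover}: since $T_p, T_q$ lie in the same component $D$ of $S_g \setminus (a_0 \cup a_4)$, the arcs of $a_4 \setminus a_0$ bounding them are glued across $D$ in a controlled way, and a count of how many arcs of $a_4 \setminus a_0$ are ``spent'' between $w_p$ and $w_q$ shows that too few arcs remain to cut the glued-up surface into discs, contradicting that $a_4 \setminus a_0$ must be a filling system of $S_g \setminus a_0$. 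Combining this with the Euler-characteristic/complexity bound forced by $g \geq 2$ and $i_{min}(g,3) \geq 4$ closes the argument. A clean alternative, which I would present if the subsurface bookkeeping gets unwieldy, is to directly exhibit, in each of the cases $q - p = 1, 2, 3$, an explicit curve $p_0$ with $i(p_0, a_0) = 0$ and $i(p_0, a_4) \leq 1$ by taking the core of the annular region glued up from the buckets $T_p, \dots, T_q$; its existence is exactly the distance-$3$ contradiction, paralleling the core-curve argument used in the proof of Lemma \ref{lem:self_gluing}.
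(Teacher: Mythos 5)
There is a genuine gap. You correctly identify the paper's two ingredients -- the scaling curve $\gamma$ from $T_p$ to $T_q$ is essential (lemma \ref{lem:gamma_nonnull}) and fills with $a_0$, hence $d(\gamma,a_0)\geq 3$ (corollary \ref{lem:gamma_fill}) -- but you never make the one quantitative observation that actually proves the corollary: by construction the arc $\gamma''$ meets $a_0$ only at its two endpoints (on $a_0^{p+1}$ and $a_0^q$) and at one point on each of $a_0^{p+2},\dots,a_0^{q-1}$, while $\gamma'$ lies in the interior of $D$, so $i(\gamma,a_0)\leq q-p$. Combined with $d(\gamma,a_0)\geq 3$ and $i_{min}(g,3)\geq 4$ this gives $q-p\geq i(\gamma,a_0)\geq 4$ in one line, with no case analysis and no contradiction argument over $q-p=1,2,3$. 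This count of $\gamma\cap a_0$ is exactly the bridge between ``small separation of buckets'' and ``too few intersections,'' and it is the step the intended proof (stated in the paper just before the corollary, as a consequence of corollary \ref{lem:gamma_fill} and $i_{min}(g,3)\geq 4$) relies on; without it your proposal has no mechanism for turning $|q-p|\leq 3$ into a statement about the curve graph.

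The substitutes you propose do not close this gap. The claim that $q-p\in\{2,3\}$ ``forces a curve intersecting $a_4$ at most once and disjoint from $a_0$'' is not justified and is stronger than anything needed: the contradiction should be that $\gamma$ itself is too close to $a_0$, not that some new curve nearly disjoint from both exists (the core-curve trick of lemma \ref{lem:self_gluing} applies to gluing one complementary disc to itself along a single arc of $a_4\setminus a_0$, and there is no analogous annulus in your setting). Likewise, the route ``$\gamma$ does not fill with $a_0$, contradicting corollary \ref{lem:gamma_fill}'' is confusingly framed (that corollary holds unconditionally, so nothing is ``unless''), and even granting the intersection bound it is not closed by Euler characteristic alone: for $g=2$ and three intersection points the complement could a priori be a single $12$-gon, so you would need an additional result on minimally intersecting filling pairs, whereas the paper's use of $i_{min}(g,3)\geq 4$ together with $d(\gamma,a_0)\geq 3$ avoids this entirely. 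In short: state the bound $i(\gamma,a_0)\leq q-p$, then quote corollary \ref{lem:gamma_fill} and $i_{min}(g,3)\geq 4$, and the corollary follows immediately; the rest of your outline can be discarded.
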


\begin{lem}
\label{lem:bucket_stack}
For any $i \in K$, $T_i$ and $B_i$ can't be subsets of the same component of $S_g \setminus (a_0 \cup a_4)$.
\end{lem}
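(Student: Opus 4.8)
The plan is to exhibit $T_i$ and $B_i$ as the two buckets flanking a single arc of $a_4\setminus a_0$, and then to deduce the claim directly from Lemma \ref{lem:self_gluing}. Set $b := a_{4_{[w_i,w_{i+1}]}}$. Since $a_0\cap a_4=\{w_j:j\in K\}$ is ordered along $a_4$ and $b$ contains no other $w_j$, the arc $b$ is precisely a component of $a_4\setminus a_0$; hence it is an edge shared by exactly two (a priori possibly equal) components of $S_g\setminus(a_0\cup a_4)$.

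Next I would record the local picture inside the annulus $R_{a_4}$. Cutting $R_{a_4}$ along its core $a_4$ produces a ``top'' sub-annulus bounded by $\partial_+(R_{a_4})$ and a copy of $a_4$, and a ``bottom'' one bounded by $\partial_-(R_{a_4})$ and a copy of $a_4$; the strands $a_0^j$ cut these into the top buckets and the bottom buckets respectively. In particular $T_i$ is the bucket of the top sub-annulus incident to $b$, and $B_i$ the bucket of the bottom sub-annulus incident to $b$, so $T_i$ and $B_i$ lie on the two opposite sides of $b$ within $R_{a_4}$. Because the relative interior of a bucket (its interior together with the edges lying on $\partial(R_{a_4})$) is disjoint from $a_0\cup a_4$ and each bucket is connected, $T_i$ is contained in the component $D_+$ of $S_g\setminus(a_0\cup a_4)$ on the $\partial_+(R_{a_4})$-side of $b$, and $B_i$ in the component $D_-$ on the $\partial_-(R_{a_4})$-side of $b$ (these being the unique components containing $T_i$ and $B_i$, as already noted in the text). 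Thus $\{D_+,D_-\}$ is exactly the pair of components of $S_g\setminus(a_0\cup a_4)$ meeting along the edge $b$.

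Finally, by the part of the proof of Lemma \ref{lem:self_gluing} that establishes $D_1\ne D_2$, these two components are distinct, so $D_+\ne D_-$, and hence $T_i$ and $B_i$ lie in different components, as required. (For a self-contained argument one could instead suppose $T_i,B_i\subset D$ for a single component $D$; gluing $D$ to itself along $b$ yields a subsurface of the orientable surface $S_g$ that is a single annulus, whose core curve $p$ is essential, disjoint from $a_0$, and meets $a_4$ once, so $a_0,p,a_4$ is a path of length $2$ in $\cc$ — contradicting $d(a_0,a_4)=4$.) I do not anticipate a real obstacle; the only point that needs care is the first observation, namely that $T_i$ and $B_i$ are the two buckets adjacent to the \emph{same} component $b$ of $a_4\setminus a_0$, after which the statement is immediate from Lemma \ref{lem:self_gluing}.
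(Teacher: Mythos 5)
Your argument is correct and is essentially the paper's own proof: the paper likewise takes $b=a_{4_{[w_i,w_{i+1}]}}$ and deduces the claim from the $D_1\neq D_2$ step in the proof of Lemma \ref{lem:self_gluing}, exactly as you do. One small correction to your optional self-contained aside: since $i(p,a_4)=1$, the vertices $p$ and $a_4$ are not adjacent in $\cc$, so $a_0$, $p$, $a_4$ is not a path of length $2$; instead $i(p,a_4)=1$ gives $d(p,a_4)=2$ (the boundary of a regular neighbourhood of $p\cup a_4$ is essential since $g\geq 2$), hence $d(a_0,a_4)\leq 3$, which still contradicts $d(a_0,a_4)=4$.
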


\begin{proof}
This follows directly from the proof of lemma \ref{lem:self_gluing} by considering $b = a_{4_{[i, i+1]}}$.
\end{proof}

Let $\delta_1$ be the point on $\partial_+(R_{a_4})$ such that $\delta_1 = \delta \cap \partial_+(R_{a_4})_{[u_1, u_2]}$ and that one of the arcs $\partial_+(R_{a_4}) \setminus \{u_1\cup \delta_1\}$ doesn't contain any points of $\delta \cap \partial_+(R_{a_4})$.  If $m = i(a_4, \delta)$, let $\delta \cap \partial_+(R_{a_4}) = \{\delta_i : 1 \leq i \leq m\}$ such that the $\delta_1$, $\delta_2$, \dots, $\delta_m$ are along the orientation of $\partial_+(R_{a_4})$. Let the strand of $\delta$ containing the point $\delta_i$ be $\delta^i$.

Let $\delta^r$, $\delta^s$ be any two distinct strands of $\delta$ in $R_{a_4}$ such that $\delta^r$ and $\delta^s$ start in distinct top buckets, say $T_r$ and $T_s$, and that there exists a component of $\partial_+(R_{a_4}) \setminus (\delta^r \cup \delta^s)$ that doesn't contain any points of $\partial_+(R_{a_4}) \cap \delta$ other than $\partial_+(R_{a_4}) \cap \delta^r$ and $\partial_+(R_{a_4}) \cap \delta^s$. We call the rectangular component of $R_{a_4} \setminus (\delta^r \cup \delta^s)$ which doesn't contain any other strand of $\delta$ as a \textit{$\delta$-track in $R_{a_4}$} and denote it by $\delta ^{r,s}$. The boundary of $\delta^{r,s}$ comprises of the arcs $\delta^r$, $\delta^s$, $\partial_+(R_{a_4})_{[\delta^r, \delta^s]}$ and $\partial_-(R_{a_4})_{[\delta^r, \delta^s]}$. Further, assuming $r < s$, we call the set $\bigcup_{i=r}^{s}(T_i \cup B_i)$ as \textit{inside of $\delta^{r,s}$}.

\begin{lem}
\label{lem:bucket_lemma}
Let $O$ be a $2n$-gon disc of $S_g \setminus (a_0 \cup a_4)$ with $n \geq 4$. For any delta-track in $R_{a_4}$, $\delta^{r,s}$, there exists at-least one top bucket or, bottom bucket in $O$ that is not in the inside of $\delta^{r,s}$.
\end{lem}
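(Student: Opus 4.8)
The plan is to argue by contradiction: assume every bucket of $O$ lies in the inside of $\delta^{r,s}$, i.e.\ every $a_4$-side of $\partial O$ is of the form $a_{4_{[w_i,w_{i+1}]}}$ with $r\le i\le s$, and then produce a curve disjoint from both $a_4$ and $\delta$; this is impossible, because $\delta\in B_1(\taf)$ together with $d(a_4,\taf)=d(a_4,a_0)=4$ (as $T_{a_4}$ is an isometry of $\cc$ fixing $a_4$) forces $d(a_4,\delta)\ge 3$.

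First I would record the combinatorics of $O$. Being a $2n$-gon, $\partial O$ consists of $n$ arcs of $a_0$ alternating with $n$ arcs of $a_4$; each $a_4$-side $a_{4_{[w_i,w_{i+1}]}}$ places exactly one of $T_i,B_i$ inside $O$. These $n$ buckets sit in $n$ \emph{distinct} columns: a column cannot put both $T_i$ and $B_i$ in $O$ by Lemma~\ref{lem:bucket_stack}, and no $a_4$-arc can appear twice on $\partial O$ since it borders two different components. Under the contradiction hypothesis all $n$ of these columns lie in $\{r,r+1,\dots,s\}$, so at least $n-2\ge 2$ of $O$'s buckets lie in columns strictly between $r$ and $s$.

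Next comes the key point: the columns $r+1,\dots,s-1$ are disjoint from $\delta$. Indeed $\delta\cap R_{a_4}$ is exactly the union of the strands of $\delta$, the rectangle $\delta^{r,s}$ contains none of them, and the buckets $T_i,B_i$ with $r<i<s$ lie in $\delta^{r,s}$. Hence $\delta$ can meet $\partial O$ only along the sides of the at most two buckets of $O$ in columns $r$ or $s$, so every arc of $\delta\cap O$ has both endpoints confined to the closures of those buckets, and $O\setminus(\delta\cap O)$ has a distinguished region $O^{\circ}$ containing the interiors of all buckets of $O$ in columns $r+1,\dots,s-1$. Now, since $n\ge 4$, by pigeonhole $O$ has two buckets of the same type, say top buckets $T_p,T_q$ with $p<q$; by Corollary~\ref{cor:separation} $q-p\ge 4$, and since $p,q\in[r,s]$ the columns $p+1,\dots,q-1$ lie in $\{r+1,\dots,s-1\}$ and so are $\delta$-free. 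Take the scaling curve $\gamma=\gamma'\cup\gamma''$ from $T_p$ to $T_q$, with $\gamma''\subset\bigcup_{i=p+1}^{q-1}T_i$ parallel to $a_4$ and $\gamma'$ in the interior of $O$; route $\gamma'$ inside $O^{\circ}$, which is possible because the endpoints of $\gamma'$ lie on $a_0$-arcs adjacent to buckets of $O$ in columns strictly between $r$ and $s$. Then $i(\gamma,a_4)=0$ by construction and $i(\gamma,\delta)=0$, while $\gamma$ is essential by Lemma~\ref{lem:gamma_nonnull}. Therefore $d(a_4,\delta)\le d(a_4,\gamma)+d(\gamma,\delta)\le 2$, contradicting $d(a_4,\delta)\ge 3$; hence some bucket of $O$ is not in the inside of $\delta^{r,s}$.

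The step I expect to be the main obstacle is the routing of $\gamma'$ inside $O^{\circ}$ — equivalently, checking that $O\setminus(\delta\cap O)$ genuinely has one region meeting all buckets of $O$ in columns $r+1,\dots,s-1$. This requires looking carefully at how the simple curve $\delta$ crosses the disc $O$, using that $\delta$ only enters $O$ near the two extreme columns $r$ and $s$, and it needs a small case check when $n=4$: there one may be forced to choose $T_p,T_q$ with $p=r$ or $q=s$, so one endpoint of $\gamma'$ abuts a bucket that $\delta$ does meet, and $\gamma'$ must be steered past the finitely many arcs of $\delta$ issuing from that bucket (which is possible since, on the boundary of that bucket, the side carrying the relevant endpoint of $\gamma'$ borders a $\delta$-free bucket of $O$).
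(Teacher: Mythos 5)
Your overall strategy is the same as the paper's: assume every bucket of $O$ lies inside $\delta^{r,s}$, build a scaling curve disjoint from both $a_4$ and $\delta$, and contradict $d(\delta,a_4)\geq 3$. The gap is precisely at the step you flag as the ``main obstacle'' and then leave unresolved: the claim that $O\setminus(\delta\cap O)$ has a single region $O^{\circ}$ meeting the interiors of all buckets of $O$ in columns $r+1,\dots,s-1$. Knowing that the endpoints of every arc of $\delta\cap O$ lie in the closures of the (at most two) buckets of $O$ in columns $r$ and $s$ does not prevent such an arc from running across the interior of the disc $O$: the continuation of $\delta^r$ past its crossing with $a_4$ leaves $R_{a_4}$, cannot meet $a_0$ outside $R_{a_4}$ and cannot enter any of the $\delta$-free middle buckets, so it may traverse the middle of $O$ and re-enter at the column-$s$ bucket. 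This is exactly the arc $\delta'$ in the paper's proof, with endpoints $\delta^r\cap a_4$ and $\delta^s\cap a_4$; it separates the disc $O$ into two components, and the buckets of $O$ in columns strictly between $r$ and $s$ may be distributed on both sides. Your pair $T_p,T_q$, produced by pigeonhole over all of $O$, can then lie on opposite sides of $\delta'$, in which case $\gamma'$ cannot be routed disjointly from $\delta$ and no contradiction results. Your closing remark about steering $\gamma'$ past finitely many arcs issuing from one bucket does not address this, since the obstruction is a single arc joining the two extreme buckets, not a local cluster of arcs near one of them.

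This separating arc is what the bulk of the paper's proof handles and what your proposal is missing. The paper lists the possible configurations of $\delta'$ according to whether the column-$r$ and column-$s$ buckets of $O$ are top or bottom buckets (figures \ref{fig:bucket_lemma_1}, \ref{fig:bucket_lemma_2}, \ref{fig:bucket_lemma_3}), applies the pigeonhole principle to each component of $O\setminus\delta'$ separately to find two buckets of the same type on one side --- and hence a scaling curve disjoint from $\delta$ and $a_4$ --- and, in the residual configuration of figure \ref{fig:bucket_lemma_3b} where each side may contain only one top bucket, invokes orientability of $S_g$ to still produce an essential curve disjoint from $\delta$ and $a_4$. To repair your argument you would need to add this case analysis (your first case, where every bucket of $O$ lies in a column strictly between $r$ and $s$, is fine and agrees with the paper), rather than asserting the existence of a single $\delta$-free region containing all the middle buckets.
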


\begin{proof}
Let us suppose on the contrary that there exists a $\delta$-track $\delta^{r,s}$ such that all the top and bottom buckets in $O$ are inside $\delta^{r,s}$. Without loss of generality, assume the top and bottom buckets containing the end points of $\delta^r$ are $T_r$ and $B_r$, respectively. Similarly, for $\delta^s$ the top and bottom buckets are $T_s$ and $B_s$, respectively. If every bucket in $O$ are of the form $T_i$ or, $B_i$ for $r < i < s$ then, we get a scaling curve $\gamma$ such that $\delta$, $\gamma$, $a_4$ is a path. This contradicts $d(\delta, a_4) \geq 4$. 

Also, since every bucket of $O$ is inside $\delta^{r,s}$, any arc of $\delta \cap O$ are either an arc that covers $a_{4_{[i, i+1]}}$ for $r < i < s$ or, an arc, say $\delta'$, with end points $\delta^r \cap a_4$ and $\delta^s \cap a_4$. It follows along with lemma \ref{lem:bucket_stack} that either $T^r$, $T^s$ or, $T^r$, $B^s$ or, $B^r$, $T^s$ or, $B^r$, $B^s$ are buckets of $O$. We show that all these possibilities, if they exist, lead to a contradiction. The following is a combinatorial proof and we give it for the case $O$ is an octagon. As $n$ increases, the proof remains intact with only the possibility of certain cases being redundant. 

If $T^r$, $T^s$ are buckets in $O$, figure \ref{fig:bucket_lemma_1} shows the distinct possible $\delta'$. If $T^r$, $B^s$ are buckets in $O$, figure \ref{fig:bucket_lemma_2} shows the distinct possible $\delta'$. If $B^r$, $B^s$ are buckets in $O$, figure \ref{fig:bucket_lemma_3} shows the distinct possible $\delta'$. In the possible cases of figure \ref{fig:bucket_lemma_1}, \ref{fig:bucket_lemma_2} and \ref{fig:bucket_lemma_3a}, by the pigeon hole principle, either component of $O \setminus \delta'$ contains either two top or, bottom buckets. Thus, if these cases occur, we can construct a scaling curve $\gamma$ such that $\delta$, $\gamma$, $a_4$ is a path, which is absurd. For figure \ref{fig:bucket_lemma_3b}, if both the components of $O \setminus \delta'$ contains a top bucket, because $S_g$ is an orientable surface, we will be able to find a non-trivial curve $\gamma$ with properties as in the above cases. Here, $\gamma \cap O$ lies in the component of $O$ containing the vertices $w_{r+1}$ and $w_{s}$.

If $B^r$, $T^s$ are buckets in $O$, the arguments are similar to the case of $T^r$ and $B^s$ being buckets of $O$.

\begin{figure}
     \centering
     \begin{subfigure}{0.35\textwidth}
         \centering
	\includegraphics[width=\textwidth]{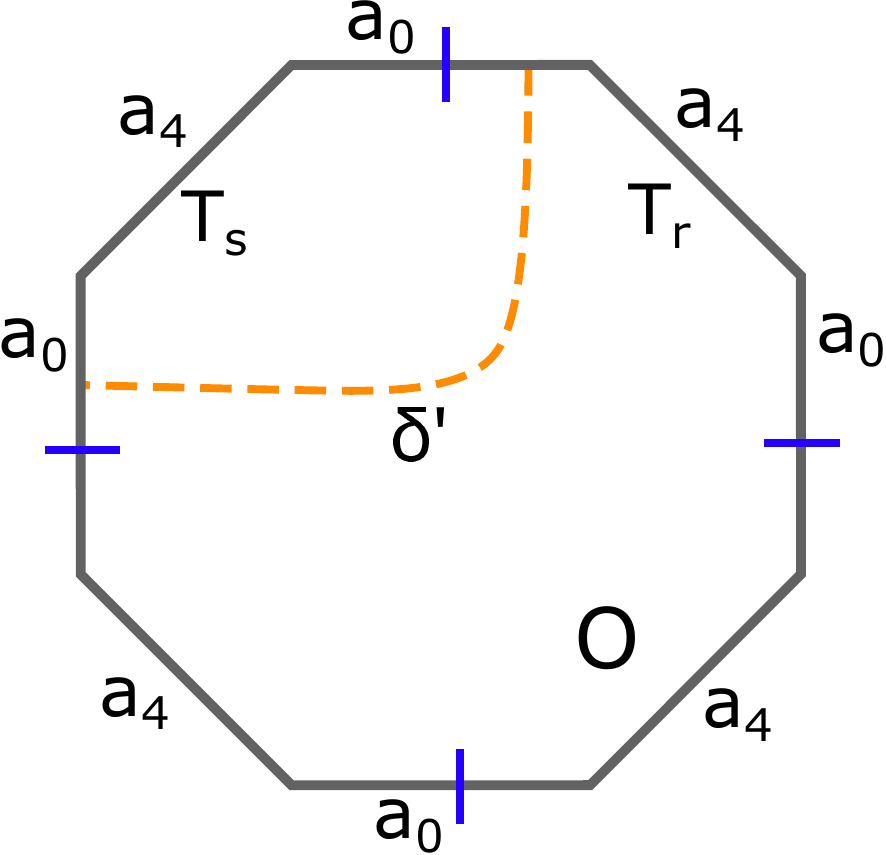}
	\caption{}
	\label{fig:bucket_lemma_1a}
     \end{subfigure}
     \hspace{1cm}
     \begin{subfigure}{0.35\textwidth}
         \centering
	\includegraphics[width=\textwidth]{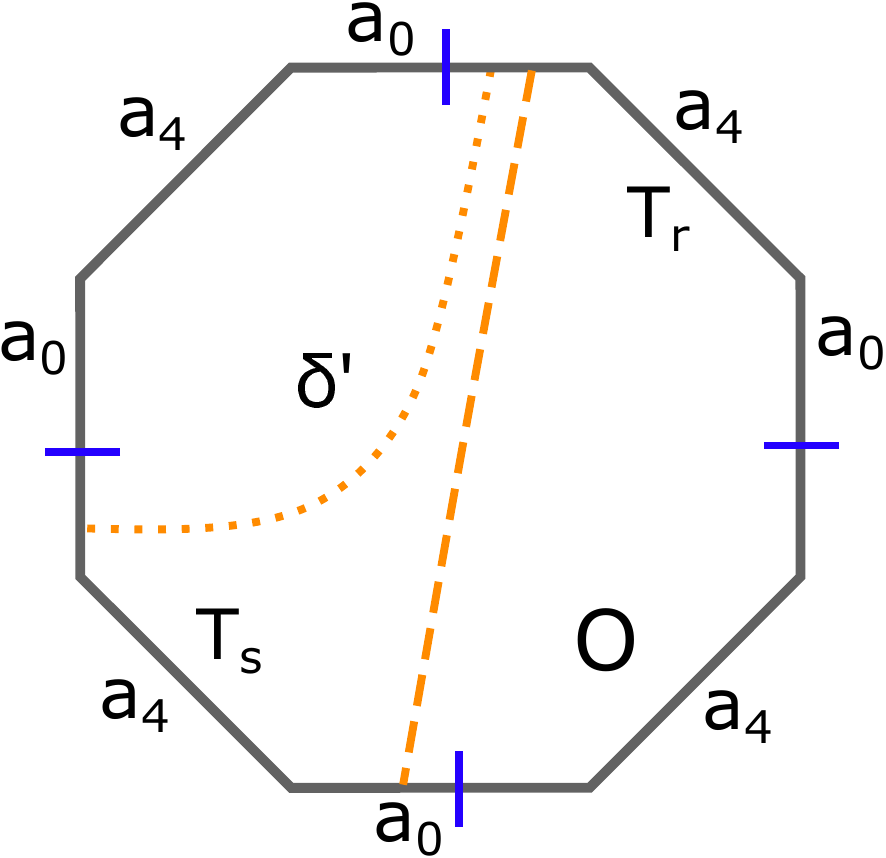}
	\caption{}
	\label{fig:bucket_lemma_1b}
     \end{subfigure}
     \caption{Both the dotted line and the dashed line are possibilities for $\delta'$ if $T_r$ and $T_s$ are as in the schematic.}
     \label{fig:bucket_lemma_1}
\end{figure}

\begin{figure}
     \centering
     \begin{subfigure}{0.35\textwidth}
         \centering
	\includegraphics[width=\textwidth]{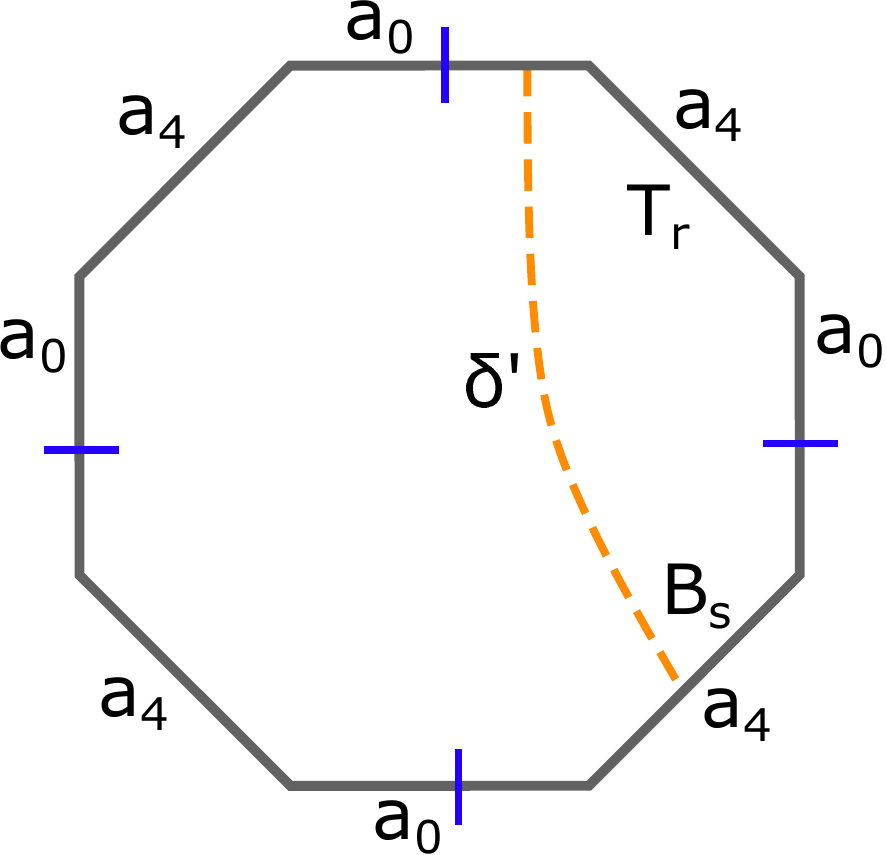}
	\caption{}
	\label{fig:bucket_lemma_2a}
     \end{subfigure}
     \hspace{1cm}
     \begin{subfigure}{0.35\textwidth}
         \centering
	\includegraphics[width=\textwidth]{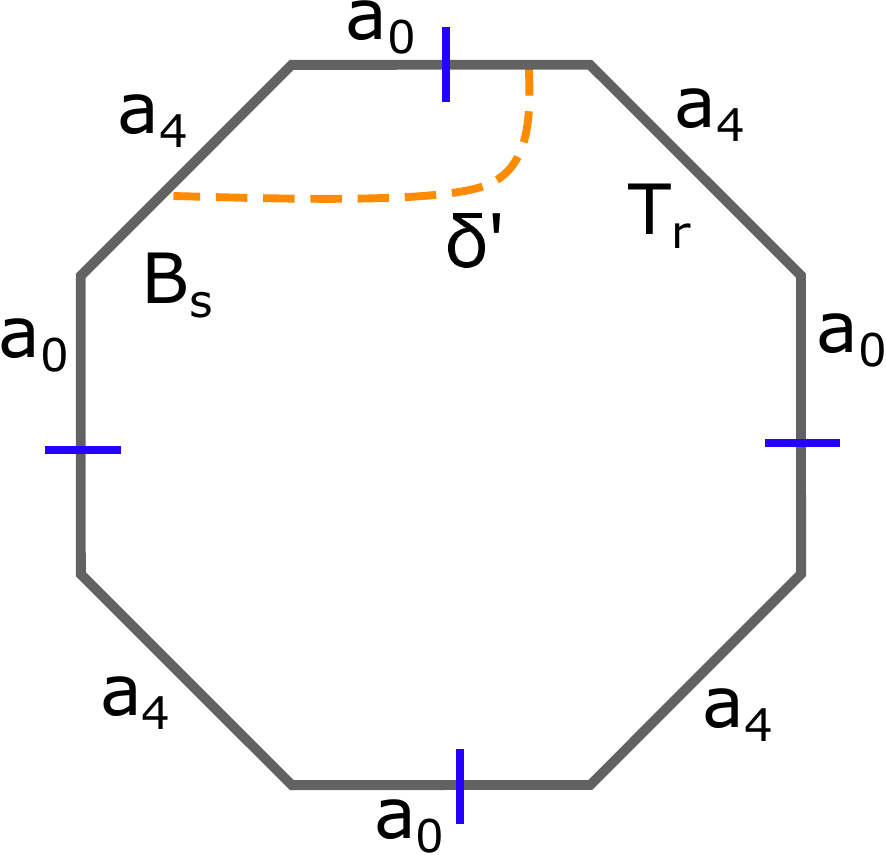}
	\caption{}
	\label{fig:bucket_lemma_2b}
     \end{subfigure}
     \caption{}
     \label{fig:bucket_lemma_2}
\end{figure}

\begin{figure}
     \centering
     \begin{subfigure}{0.35\textwidth}
         \centering
	\includegraphics[width=\textwidth]{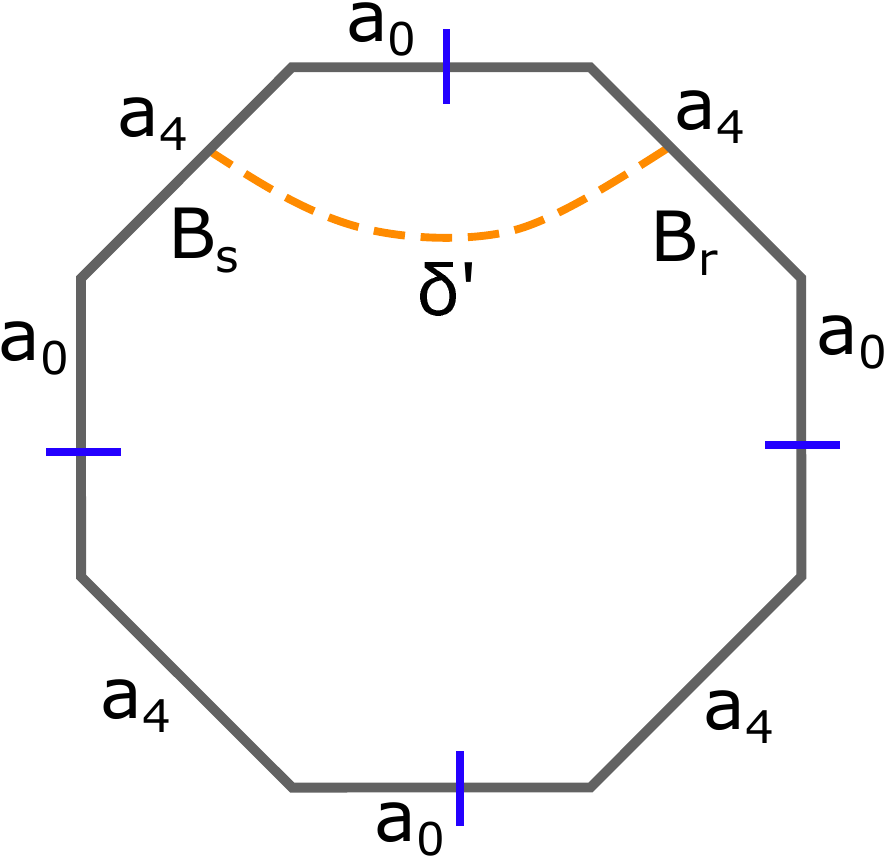}
	\caption{}
	\label{fig:bucket_lemma_3a}
     \end{subfigure}
     \hspace{1cm}
     \begin{subfigure}{0.35\textwidth}
         \centering
	\includegraphics[width=\textwidth]{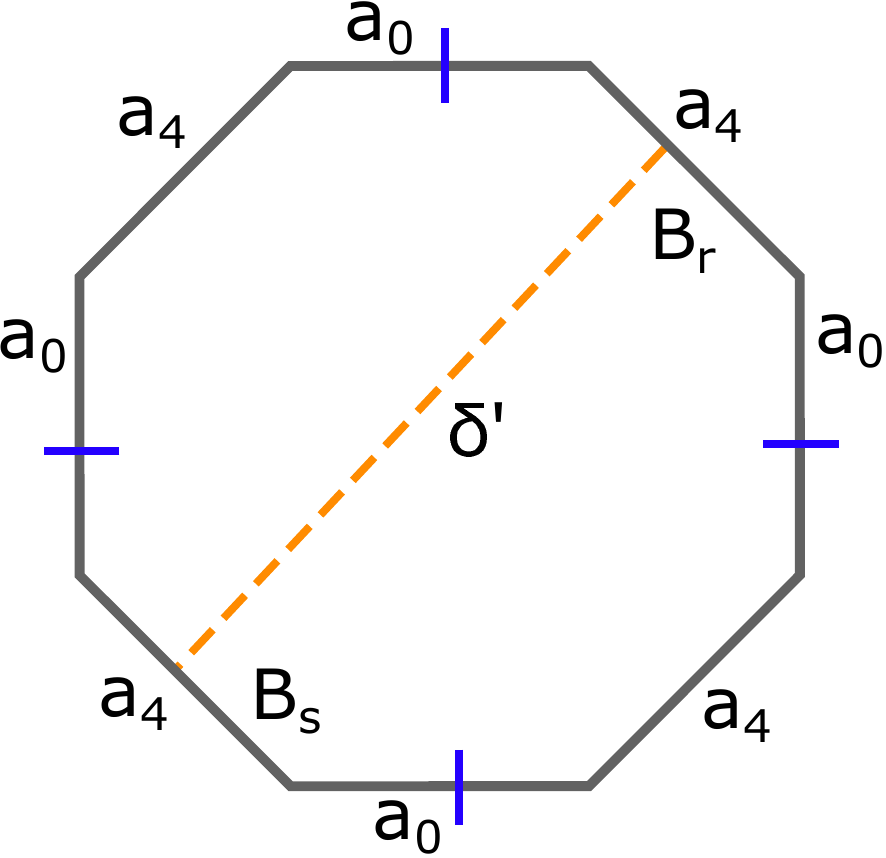}
	\caption{}
	\label{fig:bucket_lemma_3b}
     \end{subfigure}
     \caption{}
     \label{fig:bucket_lemma_3}
\end{figure}

\end{proof}

Suppose we have a filling system of arcs of $S_g \setminus a_0$ and there is another set of arcs on $S_g \setminus a_0$ that covers all the arcs in the former filling system of arcs except for one. In the following we explore a sufficient condition on the latter system of arcs which ensures that it forms a filling system of arcs. To prove this condition we take the aid of the fact that $a_0$ and $a_4$ fill $S_g$.

Let $\mathcal{I}$ be some non-rectangular component of $S_g \setminus (a_0 \cup a_4)$ and $a_0^{J_1}$, $a_0^{J_2}$ be two distinct edges of $\mathcal{I}$, where $J_1, J_2 \in K$.  Consider an arc, $b$, in $\mathcal{I}$ with end points on $a_0^{J_1}$ and $a_0^{J_2}$. Let $I$ be one of the two components of $\mathcal{I} \setminus J$ such that $I$ contains an edge $a_0^J$ for some $J \in K$. By our assumption, there exists an edge, $a_0^{J_3}$, in $I$ such that $a_0^{J_2} \cap I$ and $a_0^{J_3}$ are adjacent in the polygon $I$. Consider arcs, $z$ and $z'$, in $I$ such that the end points of $z$ are $a_0^{J_1} \cap I$, $a_0^{J_3}$ and the endpoints of $z'$ are $a_0^{J_2}$, $a_0^{J_3}$. Clearly, $z'$ covers the $a_4$ edge in $I$ adjacent to $a_0^{J_2} \cap I$ and $a_0^{J_3}$. We call such a pair of arcs $(z, z')$ to \textit{almost cover $b$}. Figure \ref{fig:almost_filling} gives a schematic of $(z,z')$.

\begin{figure}
\centering
\includegraphics[scale=0.5]{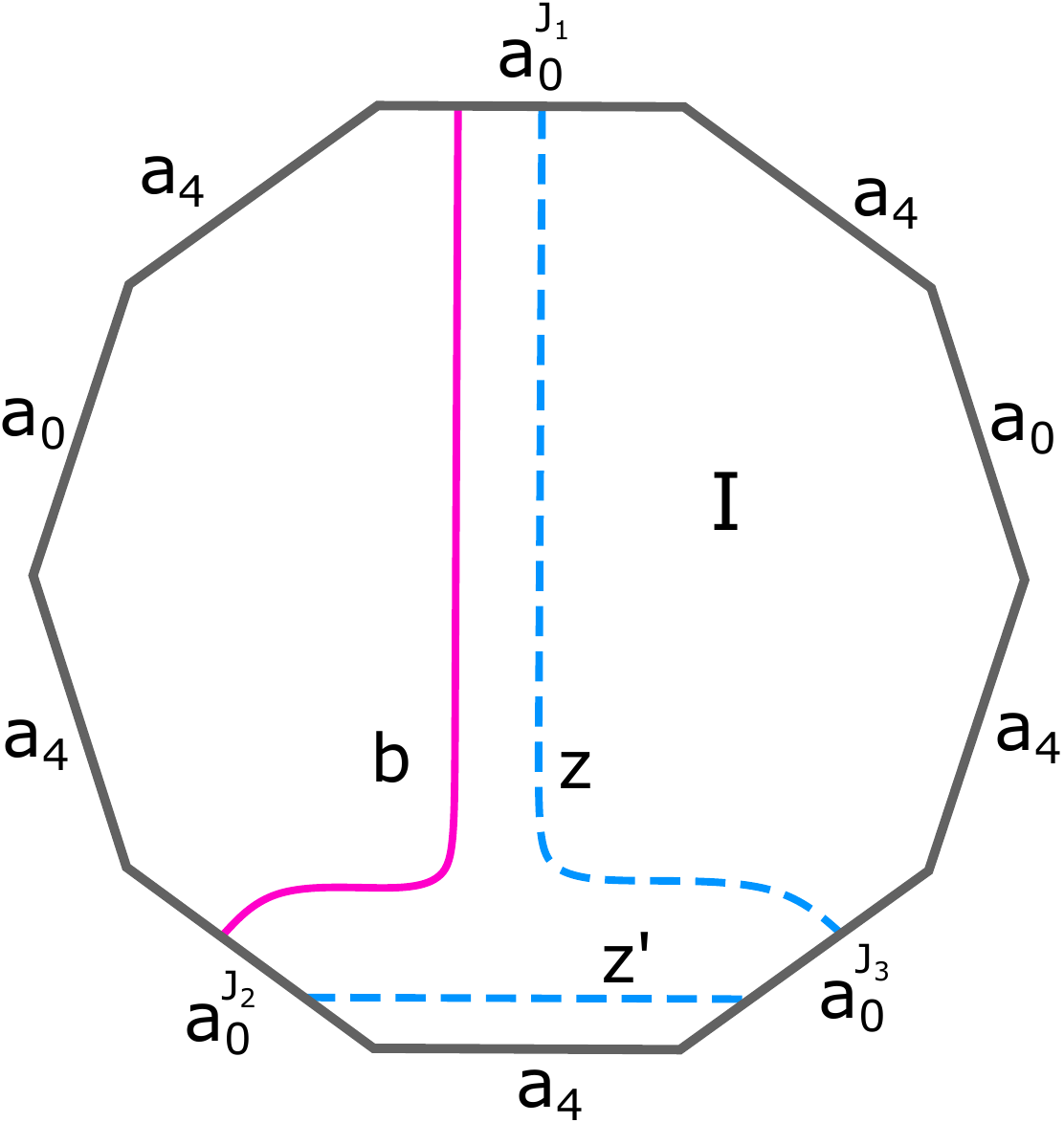}
\caption{A schematic of $\mathcal{I}$ with $(z, z')$ almost covering $b$.}
\label{fig:almost_filling}
\end{figure}

\begin{lem}\label{lem:almost_fill}
Let $a_0$ and $a_4$ be curves on $S_g$ with $d(a_0, a_4)=4$. Let $\kappa$ be another curve on $S_g$ such that $a_0$ and $\kappa$ fill $S_g$. Let $\Gamma$ be a set of essential arcs on $S_g \setminus a_0$. If $\Gamma$ consists of arcs that covers all but one arc of $\kappa \setminus a_0$ and almost covers the remaining arc of $\kappa \setminus a_0$, then $\Gamma$ forms a filling system of arcs of $S_g \setminus a_0$.
\end{lem}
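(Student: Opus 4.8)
The plan is to reduce the statement to Lemma~\ref{lem:cover} (and Lemma~\ref{lem:self_gluing}) by showing that the pair of arcs $(z,z')$ that almost covers the missing arc $b$ of $\kappa\setminus a_0$, together with the arcs of $\Gamma$ that cover the remaining arcs of $\kappa\setminus a_0$, already forms a filling system of $S_g\setminus a_0$; since $\Gamma$ contains all of these arcs, $\Gamma$ itself is then filling. So the real content is: \emph{$\{$arcs covering $\kappa_i$, $i\neq i_0\}\cup\{z,z'\}$ cuts $S_g\setminus a_0$ into discs.}

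First I would set up notation: let $\kappa\setminus a_0=\{\kappa_1,\dots,\kappa_n\}$, let $\kappa_{i_0}=b$ be the arc that is only almost-covered, and let $g_i\in\Gamma$ cover $\kappa_i$ for $i\neq i_0$. Cutting $S_g\setminus a_0$ along $\bigcup_{i\neq i_0} g_i$ gives, by the covering/isotopy argument of Lemma~\ref{lem:cover}, the same pieces as cutting $S_g\setminus a_0$ along $\bigcup_{i\neq i_0}\kappa_i = (\kappa\setminus a_0)\setminus b$. Now here is where I would invoke the hypothesis that $a_0$ and $a_4$ fill $S_g$: by Lemma~\ref{lem:self_gluing}, $(a_4\setminus a_0)\setminus b'$ is a filling system of $S_g\setminus a_0$ for \emph{any} chosen arc $b'$ of $a_4\setminus a_0$. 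But that lemma is about $a_4$, not about the arbitrary filling curve $\kappa$. The key observation I would make is that the statement we want only needs: cutting $S_g\setminus a_0$ along $(\kappa\setminus a_0)\setminus b$ plus the single extra arc $z'$ (which, recall, covers an $a_4$-edge of the polygon $I\subset\mathcal I\subset S_g\setminus(a_0\cup a_4)$, and hence in particular is an essential arc in $S_g\setminus a_0$) already produces discs. Equivalently: cutting $S_g\setminus(\text{curve }\kappa\text{'s arcs except }b)$ re-glues exactly the two polygonal faces of $S_g\setminus(a_0\cup\kappa)$ adjacent to $b$ into one polygon $P$; I must show that $z'$ (and then $z$) chops $P$ into discs.

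The heart of the argument, then, is a local polygon computation inside $\mathcal I$ and $P$. Recall $\mathcal I$ is a non-rectangular component of $S_g\setminus(a_0\cup a_4)$, with two specified $a_0$-edges $a_0^{J_1},a_0^{J_2}$; the arc $b$ joins them, $I$ is the sub-polygon of $\mathcal I\setminus b$ containing some $a_0$-edge $a_0^J$, and $z,z'$ are as in Figure~\ref{fig:almost_filling} with $z'$ covering the $a_4$-edge of $I$ between $a_0^{J_2}\cap I$ and $a_0^{J_3}$. I would argue: the face $P$ obtained by re-gluing across $b$ has $z$ as a chord cutting off precisely the polygon $I$ (a disc), and within $I$ the arc $z'$ cuts off a triangle/rectangle bounded by $z'$, the $a_4$-edge, and a $a_0$-edge segment (a disc), leaving the rest of $I$ — which is again a disc because $I$ was a polygon and $z'$ is a chord. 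Combined with the fact that all the other faces of $S_g\setminus(a_0\cup\kappa)$ are untouched discs, this shows $(\kappa\setminus a_0)\setminus b$ together with $\{z,z'\}$ is a filling system, and a fortiori so is $\Gamma\supseteq\{g_i\}_{i\neq i_0}\cup\{z,z'\}$.

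The main obstacle I anticipate is making the ``re-gluing across $b$ gives polygon $P$, and $z,z'$ are honest chords of the relevant discs'' step fully rigorous: one must check that $b$ is adjacent to two \emph{distinct} faces of $S_g\setminus(a_0\cup\kappa)$ (otherwise re-gluing gives an annulus, not a disc) — this is exactly the role of $a_0,\kappa$ filling, mirroring the $D_1\neq D_2$ argument in Lemma~\ref{lem:self_gluing}, where a central curve of the annulus would give a distance-$3$ path $a_0,p,\kappa$, contradicting $d(a_0,\kappa)\geq 3$ (which holds since $a_0,\kappa$ fill). One must also verify that $z$ and $z'$ are essential and non-peripheral chords so that cutting along them genuinely produces discs rather than creating a bigon or trivial piece; this uses that $\mathcal I$ is non-rectangular and that $a_0^{J_1},a_0^{J_2},a_0^{J_3}$ are pairwise distinct $a_0$-edges, so $z'$ separates a genuine sub-polygon. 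I would organize the write-up as: (1) reduce to showing $\{g_i\}_{i\neq i_0}\cup\{z,z'\}$ is filling via Lemma~\ref{lem:cover}; (2) identify the face $P$ re-glued across $b$ and prove it is a disc using the filling/distance argument; (3) the two-step chord computation in $P$ and $I$; (4) conclude.
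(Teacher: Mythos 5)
Your reduction to showing that $\{g_i\}_{i\neq i_0}\cup\{z,z'\}$ fills, and your use of the covering/isotopy argument of Lemma \ref{lem:cover} to trade the covered arcs for the arcs of $\kappa\setminus a_0$ other than $b$, both agree with the paper. The gap is in your step (2), which is the step your whole plan hinges on. To prove that the piece $P$ obtained by re-gluing across $b$ is a disc, you port the $D_1\neq D_2$ argument of Lemma \ref{lem:self_gluing}: if the two faces adjacent to $b$ coincided, the core curve $p$ of the resulting annulus would satisfy $i(p,a_0)=0$ and $i(p,\kappa)=1$, and you claim this contradicts $d(a_0,\kappa)\geq 3$. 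It does not: since $i(p,\kappa)=1$, the vertices $a_0,p,\kappa$ do not form a path in $\cc$; what one actually gets is $d(a_0,\kappa)\leq d(a_0,p)+d(p,\kappa)\leq 1+2=3$, which is perfectly consistent with $a_0$ and $\kappa$ merely filling. The argument in Lemma \ref{lem:self_gluing} produces a contradiction only because there the pair is $a_0,a_4$ with $d(a_0,a_4)=4>3$. For the $\kappa$ of this lemma no such strengthening is available --- in the paper's application $\kappa$ is a scaling curve, which (see the remark following Corollary \ref{lem:gamma_fill}) may be at distance exactly $3$ from $a_0$ --- so you cannot rule out that the two faces of $S_g\setminus(a_0\cup\kappa)$ adjacent to $b$ coincide. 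Your intermediate claim that $P$ is a disc is therefore unproved; note the lemma could still be true when $P$ is an annulus (the cuts along $z,z'$ may open it into discs), but then your chord computation in step (3) no longer applies as stated.

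This is precisely why the paper takes a different route: it never analyses the faces of $S_g\setminus(a_0\cup\kappa)$ adjacent to the missing arc at all. It works with the polygonal decomposition by $a_0\cup a_4$ (whose faces are known to be discs), observes that the components of $(S_g\setminus a_0)\setminus\kappa$ are obtained from those faces by gluing along arcs of $a_4\setminus a_0$ and cutting along arcs of $\kappa\setminus a_0$, and then checks that replacing the cut along $x=b$ by the cut along $g\cup g'$ yields pieces $G_1,G_2,G_3$ of $\mathcal I\setminus(g\cup g')$ none of which contains $a_4$-edges coming from both sides $I$ and $I'$ of $x$; this compatibility, built into the definition of almost covering, is what keeps every reassembled piece a disc. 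A smaller point in the same direction: your step (3) treats $z,z'$ as chords of $P$ and asserts that $z$ cuts off ``precisely $I$'', but nothing in the definition of almost covering prevents $z,z'$ from meeting the other arcs of $\kappa\setminus a_0$ inside $\mathcal I$, and $z$ need not separate exactly $I$; the paper's bookkeeping via the partition of the $a_4$-edges of $\mathcal I$ sidesteps both issues. To repair your write-up you would need either an independent proof that $P$ is a disc (no distance argument will give it) or a restructuring of the local analysis along the paper's lines.
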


\begin{proof}
Let  $g, g' \in \Gamma$ and $x$ be the component of $\kappa \setminus a_0$ such that $(g, g')$ almost cover $x$. Let $\{g_j\}_{j \in J} \subset \Gamma$ be the arcs that cover the components of $(\kappa \setminus a_0) \setminus x$.  Let $\mathcal{I}$ be the component of $S_g \setminus (a_0 \cup a_4)$ that contains $x$.

Starting with the components of $(S_g \setminus a_0) \setminus a_4$, we can obtain the components of $(S_g \setminus a_0) \setminus \kappa$ by gluing the components of $(S_g \setminus a_0) \setminus a_4$ along the components of $a_4 \setminus a_0$ and cutting along the components of $\kappa \setminus a_0$. In the components $(S_g \setminus (a_0 \cup a_4)) \setminus \mathcal{I}$, the action of cutting along the components of $\kappa \setminus a_0$ coincides with the action of cutting along $\{g_j\}_{j \in J}$.

Let $I$ and $I'$ be the two components of $\mathcal{I} \setminus x$ such that $(g \cup g') \subset I$. In $\mathcal{I}$, the action of gluing along $x$ is such that it separates the $a_4$-edges of $\mathcal{I}$ into two sets i.e. $(a_4 \setminus a_0) \cap I$ and $(a_4 \setminus a_0) \cap I'$. Let $G_1$, $G_2$ and $G_3$ be the components of $\mathcal{I} \setminus (g \cup g')$. From the schematic in figure \ref{fig:almost_filling}, we can see that the components of $\mathcal{I} \setminus (g \cup g')$ can be named such that $I' \subset G_1$, $G_2 \subset I$ and $G_3 \subset I$. Since, the components of $(S_g \setminus a_0) \setminus (\kappa \setminus x)$ and the components of $(S_g \setminus a_0) \setminus \cup_{j \in J} g_j$ are the same, we have that the components of $(S_g \setminus (a_0 \setminus \cup_{j \in J} g_j) \setminus (g \cup g')$ will be discs if the action of cutting along $g \cup g'$ doesn't put an arc from $(a_4 \setminus a_0) \cap I$ and another from $(a_4 \setminus a_0) \cap I'$ in the same $G_i$. Such a phenomenon never occurs by our definition of almost filling.

\end{proof}

\section{Distance $\geq 5$ criterion for $d(a_0, \taf)$}
\label{sec:main}

Geodesics between $a_0$ and $\taf$ in $\cc$ will be of the form $z_0=\taf, z_1 =\delta, z_2= c, \dots, z_N = a_0$ for some $N \in \{4, 5, 6\}$ and $c \in B_1(\delta)$. We have that $d(a_0, \taf) \geq 5$ if and only if $d(a_0, c) \geq 3$ for all possible $c \in B_2(\taf) \cap B_1(\delta)$. In the following we identify the characteristics of $\delta$ and $c$ which results in $d(c, a_0) \geq 3$. These results are compiled in section \ref{sec:conc}.

Since $c \in B_1(\delta)$, $d(\delta, a_4) \geq 3$ implies that $c \cap a_4 \neq \phi$. Thus,  there is at least one strand of $c$ in $R_{a_4}$. We now perform an isotopy of $c$ so that $c$ is in a favourable position with respect to $a_0$, $a_4$, $\partial(R_{a_4})$ on $S_g$. The idea behind this isotopy is to get a representative of $c$ such that in $R_{a_4}$, the strands of $c$ resemble the ``spiral pattern'' of the strands of $\taf$ and $\delta$. Consider an isotopic copy of $c$ such that $c$ is in minimal position with $\partial_+(R_{a_4})$, $\partial_-(R_{a_4})$, $a_0$, $a_4$ and $\delta$. Consider any strand, $c'$, of $c$ in $R_{a_4}$. Let $c_+ = c' \cap \partial_+(R_{a_4})$ and $c_- = c' \cap \partial_-(R_{a_4})$. Let $i_0 \in K = \{1, \dots, k\}$ such that $c_+$ lies on the boundary of the top bucket $T_{i_0}$. There exists $0\leq l \leq k-1$ such that $c_-$ lies on the boundary of the bottom bucket $B_{i_0+l}$. Note that since in any top bucket $T_{i}$, there is an arc of delta with end points on $a_0^i$ and $a_0^{i+1}$, if $l=0$ then $c' \cap a_0^{i_0+1} \neq \phi$. There exists $i_1, i_2 \in K$ such that $c_+$ lies in $\partial_+(R_{a_4})_{[\delta^{i_1}, \delta^{i_2}]}$. Since $c' \cap (\delta^{i_1} \cup \delta^{i_2}) = \phi$, we have $i_1 \leq i_0+l \leq i_2$. Consider the arcs $c_1$, $c_2$ and $c_3$ in the annulus $R_{a_4}$ as follows :
\begin{itemize} 
\item $c_1$ starts at $c_+$ passing through $T_{i_0}$, $T_{i_0+1}$ \dots, $T_{i_1-1}$ and ends in some interior point on $a_0^{i_1} \cap T_{i_1-1}$
\item $c_2$ is an arc parallel to $a_4$ which starts at $c_1 \cap a_0^{i_1}$, passes through $T_{i_1}$, $T_{i_1+1}$ \dots, $T_{i_0+l-1}$ and ends in some interior point on $a_0^{i_0+l} \cap T_{i_0+l}$
\item $c_3$ is an arc in the rectangle $T_{i_0+l} \cup B_{i_0+l}$ with end points $c_2 \cap a_0^{i_0+l}$ and $c_-$
\end{itemize}

Let $c''$ be the arc obtained by concatenating $c_1$, $c_2$ and $c_3$. Note that $c''$ intersects $a_4$ only once. Since both $c'$ and $c''$ are arcs in the rectangle with edges $\delta^{i_1}$, $\delta^{i_2}$, $\partial_+(R_{a_4})_{[\delta^{i_1}, \delta^{i_2}]}$ and $\partial_-(R_{a_4})_{[\delta^{i_1}, \delta^{i_2}]}$ such that both $c'$ and $c''$ have end points $c_+$ and $c_-$, there is a end point fixing isotopy, $\mathcal{I}$, of arcs in the rectangle from $c'$ to $c''$. The isotopy $\mathcal{I}$ can be extended to an isotopy of $c$ to $(c \setminus c') \cup c''$ such that the action on $c \setminus c'$ remains identity. By abuse of notation, we denote $\mathcal{I}(c')$ i.e. $c''$ by $c'$. Since the strand of $c$, $c'$, is arbitrary and $\mathcal{I}$ is identity on $c \setminus c'$, we can apply $\mathcal{I}$ to every strand of $c$ to obtain a representative of $c$ which remains in minimal position with $a_0$, $a_4$, $\taf$ and $\delta$. We will always consider such a representative of $c$.

Suppose $i(c, a_4) = k_0$. Let $c_1$, \dots, $c_{k_0}$ be the strands of $c$. Let the end points of $c_i$ be in the top bucket $T_{d_i}$ and the bottom bucket $B_{d_i-l_i}$. We call the set $C_i = \bigcup_{j=d_i-l_i}^{d}(T_j \cup B_j)$ as the inside of $c_i$. We define $\bigcap_{j=1}^{k_0}(C_j)$ as the \textit{inside of $c$}.

We now look into the possible values for $d(c, a_0)$ by considering the following two cases depending on the number of strands of $c$ :
\begin{enumerate}
\item[Case i :] there is a single strand of $c$ in $R_{a_4}$
\item[Case ii :] there are multiple strands of $c$ in $R_{a_4}$ 
\end{enumerate}

\textbf{Case i :} Suppose there exists a single strand, $c'$, of $c$ in $R_{a_4}$. Let $c' \cap \partial_+(R_{a_4})$ lie on $\partial_+(R_{a_4})_{[\delta^p, \delta^q]}$ where $1\leq p< q \leq m$. We first consider the case when $\delta^p \cap \partial_+(R_{a_4})$ and $\delta^q \cap \partial_+(R_{a_4})$ lie in the same top bucket then $c' \cap \partial_+(R_{a_4})$ and $c' \cap \partial_-(R_{a_4})$ lie in $T_e$ and $B_e$, respectively, for some $e \in K = \{1, \dots, k\}$. By lemma \ref{lem:self_gluing} we have that $c'$ and $a_0$ fills. 

We now consider the case that $\delta^p \cap \partial_+(R_{a_4})$ and $\delta^q \cap \partial_+(R_{a_4})$ lie in distinct top buckets. Without loss of generality in the arguments below, we can assume that $\delta^p \cap \partial_+(R_{a_4})$ and $\delta^q \cap \partial_+(R_{a_4})$ lie in $T_p$ and $T_q$, respectively. Let the end points of $c'$ be in the top bucket $D = T_d$ and the bottom bucket $T=B_{d-l}$ where $r \leq d-l \leq d \leq s$. The set $\bigcup_{i=d-l}^{d}(T_i \cup B_i)$ forms the inside of $c$. For a $c$ with a single strand $c'$, if $c \cap a_0 = c' \cap a_0$, we recall from section \ref{sec:set} that such a $c$ is said to be a standard single strand curve. We first show that any $c$ with $i(c, a_4) = 1$ is a standard single strand curve.

If $\mathcal{T}$ is the disc which contains $T$, then there exists another top or, bottom bucket $T^*$ which contains an endpoint of the arc of $c \cap \mathcal{T}$ containing the subarc $c' \cap \mathcal{T}$. If $c$ is a standard single strand curve then $T^* = D$. If $T^* \neq D$, let $\mathcal{D}$ be the component of $S_g \setminus (a_0 \cup a_4)$ containing $D$. Let $D^*$ be the bucket in $\mathcal{D}$ where the other end of the arc in $(c \setminus a_0) \cap \mathcal{D}$ containing $c' \cap D$ lies.

\begin{lem}
\label{lem:T_D_inside}
Let $T^* \neq D$. If either $T^*$ or, $D^*$ are inside $c$ then there exists a representative of $c$ that is a standard single strand curve.
\end{lem}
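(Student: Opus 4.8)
The plan is to reduce both cases ($T^*$ inside $c$, and $D^*$ inside $c$) to an explicit isotopy of $c$ that ``pushes'' the stray intersection points of $c$ with $a_0$ into $R_{a_4}$, thereby realising $c$ as a standard single strand curve without changing its isotopy class or disturbing minimal position with $a_0$, $a_4$, $\taf$ and $\delta$. First I would set up notation carefully: $c'$ is the unique strand of $c$ in $R_{a_4}$, with endpoints $c_+ \in \partial_+(R_{a_4})$ lying on the boundary of $D = T_d$ and $c_- \in \partial_-(R_{a_4})$ lying on the boundary of $T = B_{d-l}$. The hypothesis $i(c,a_4) = 1$ forces $c \setminus c'$ to be a single arc $e$ in $S_g \setminus a_4$ whose two endpoints are $c_+$ and $c_-$, so the two ``exposed'' ends of $c'$ at $D$ and $T$ are connected by $e$ running through the complementary components. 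The arc $c \cap \mathcal{T}$ through the bucket $T$ has its far endpoint in a bucket $T^*$, and the arc $c \cap \mathcal{D}$ through $D$ has its far endpoint in a bucket $D^*$; being a standard single strand curve is exactly the statement $T^* = D$ (equivalently $D^* = T$), and $c \cap a_0$ then lies in $R_{a_4}$.

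Next I would treat the case $T^*$ inside $c$, i.e.\ $T^* = T_j$ or $B_j$ with $d - l \le j \le d$. The key geometric observation is that the sub-annulus $\bigcup_{i=d-l}^{d}(T_i \cup B_i)$ of $R_{a_4}$ — the inside of $c$ — together with $c'$ cuts $R_{a_4}$ into pieces, and an arc of $c \cap \mathcal{T}$ with one end at $c_- \in \partial(B_{d-l})$ and the other end in a bucket $T^*$ that is itself inside $c$ must be homotopic rel endpoints (in the complement of $a_4$, staying in the union of the relevant buckets) to an arc that runs parallel to $a_4$ inside $R_{a_4}$ and then re-enters; concatenating with the portion of $e$ and with $c'$ produces a curve isotopic to $c$ all of whose intersections with $a_0$ can be finger-pushed into $R_{a_4}$ along $a_0$. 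Concretely, I would exhibit the isotopy as a composition of the ``finger push along $a_0$ into $R_{a_4}$'' move (the isotopy $I_2$ of \cite{chicken}, already invoked for $\delta$ in section \ref{sec:set}) with a homotopy of the arc $e$ supported in the union of complementary discs meeting the buckets between $B_{d-l}$ and $T_d$; the fact that these buckets all lie inside $c$ is what guarantees there is no obstruction — no arc of $a_4 \setminus a_0$ outside the inside of $c$ gets crossed, so minimal position with $a_4$ is preserved, and minimal position with $\taf$ and $\delta$ follows since those curves are disjoint from $a_4$ in the relevant region or meet it in the controlled spiral pattern. The case $D^*$ inside $c$ is symmetric under interchanging the roles of $\partial_+(R_{a_4})$ with $\partial_-(R_{a_4})$ and top buckets with bottom buckets, so I would simply note that the same argument applies verbatim with $D$, $T$, $T^*$, $D^*$ relabelled.

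The main obstacle I anticipate is making the ``homotopy of $e$ supported in the buckets between $B_{d-l}$ and $T_d$'' genuinely rigorous rather than merely picture-plausible: one has to check that when $T^*$ (or $D^*$) lies inside $c$, the subsurface of $S_g \setminus a_4$ through which the relevant arc of $c$ travels really does admit an isotopy carrying that arc into $R_{a_4}$ without creating new intersections with $a_0$ elsewhere, and in particular that one never needs to cross the strand $c'$ itself. I would handle this by working in the polygonal disc picture of $S_g \setminus a_0$ (or $S_g \setminus (a_0 \cup a_4)$) introduced in section \ref{sec:set}: an arc of $c$ with both endpoints on $\partial(R_{a_4})$-arcs bounding buckets that are inside $c$ is, up to isotopy rel endpoints in the complement of $a_0$, determined by the combinatorics of which $a_4$-edges it separates, and ``inside $c$'' is precisely the condition ensuring this arc can be slid to agree with a sub-arc of $c'$ (traversed the other way) composed with a short arc in $R_{a_4}$. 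Once that reduction is in place, the conclusion — existence of a representative of $c$ with $c \cap a_0 \subset R_{a_4}$ and $i(c,a_4) = 1$, i.e.\ a standard single strand curve — is immediate from the definition in section \ref{sec:set}, and I would close by remarking that this representative remains in minimal position with all four of $a_0$, $a_4$, $\taf$, $\delta$ because each step (the finger-push $I_2$ and the combinatorial arc-sliding) preserves minimality, as recorded in \cite{chicken}.
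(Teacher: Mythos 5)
There is a genuine gap. Your entire construction rests on the claim that, because the buckets between $B_{d-l}$ and $T_d$ lie inside $c$, the outer arc of $c$ (your $e$, the part not containing $c \cap a_4$) can be slid into $R_{a_4}$ by a homotopy ``supported in the union of complementary discs meeting the buckets between $B_{d-l}$ and $T_d$'', and that such an arc is determined up to isotopy rel endpoints by the combinatorics of which $a_4$-edges it separates. That determinacy claim is false in general: the outer arc lives in the cut surface $S_g \setminus a_4$, which has positive genus, so an arc with prescribed endpoints has infinitely many isotopy classes rel endpoints; it may wind around handles far from $R_{a_4}$, in which case no isotopy into the buckets exists and no finger-push along $a_0$ (the isotopy $I_2$) can fix this, since $I_2$ only relocates intersection points with $a_0$ and cannot change which bucket the far endpoint of the arc lands in. The point of the lemma is precisely to rule out this winding, and that requires a global input you never invoke: the paper closes up the outer arc $\zeta_1$ with an arc $\zeta_2$ parallel to $a_4$ running through the top buckets from $T^*$ to $D$, observes that the resulting closed curve $\zeta$ is disjoint from $a_4$ by construction and disjoint from $\delta$ because $T^*$ lies inside $c$ and hence inside the $\delta$-track $\delta^{p,q}$, and then concludes $\zeta$ is null-homotopic since otherwise $\delta$, $\zeta$, $a_4$ would be a path contradicting $d(\delta, a_4) \geq 3$. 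Only then does triviality of $\zeta$ give the isotopy carrying $\zeta_1$ onto $\zeta_2$, i.e.\ the standard single strand representative. Your proposal uses $\delta$ only to discuss preservation of minimal position, not to establish existence of the isotopy, so the central step is unsupported.

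A second omission: the paper first shows, via a scaling-curve argument and again the contradiction with $d(\delta, a_4) \geq 3$, that if $D^*$ is inside $c$ it cannot be a top bucket (and if $T^*$ is inside $c$ it cannot be a bottom bucket). This case analysis is needed before the $\zeta_2$ arc parallel to $a_4$ can even be drawn on the correct side of $R_{a_4}$; your symmetric relabelling of $D$, $T$, $T^*$, $D^*$ does not address it.
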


\begin{proof}
Without loss of generality, suppose that $D^*$ is inside $c$. We show that $D^*$ can't be a top bucket. Similar arguments ensure that whenever $T^*$ is inside $c$, it can't be a bottom bucket. Assume on the contrary that $D^*$ is a top bucket. Then the arc of $c$ in $D^*$ is either as in figure \ref{fig:D_same_side_i} or, \ref{fig:D_same_side_ii}. In either case, consider $\gamma$ as shown in figure \ref{fig:D_same_side}. We observe that in figure \ref{fig:D_same_side}, the arc $(\gamma \setminus a_0) \cap \mathcal{D}$ from $D^*$ to $D$ is parallel to the arc $(c \setminus a_0) \cap \mathcal{D}$ from $D^*$ to $D$. By corollary \ref{lem:gamma_fill}, $\gamma$ is essential. Since, $\gamma$ is inside $c$ it implies that $\gamma$ is inside $\delta^{p,q}$. Thus, $\gamma \cap \delta = \phi$. Further, by the construction of $\gamma$, we have that $\delta$, $\gamma$, $a_4$ is a path. But this contradicts $d(\delta, a_4) \geq 3$.

\begin{figure}
     \centering
     \begin{subfigure}{0.45\textwidth}
         \centering
	\includegraphics[width=\textwidth]{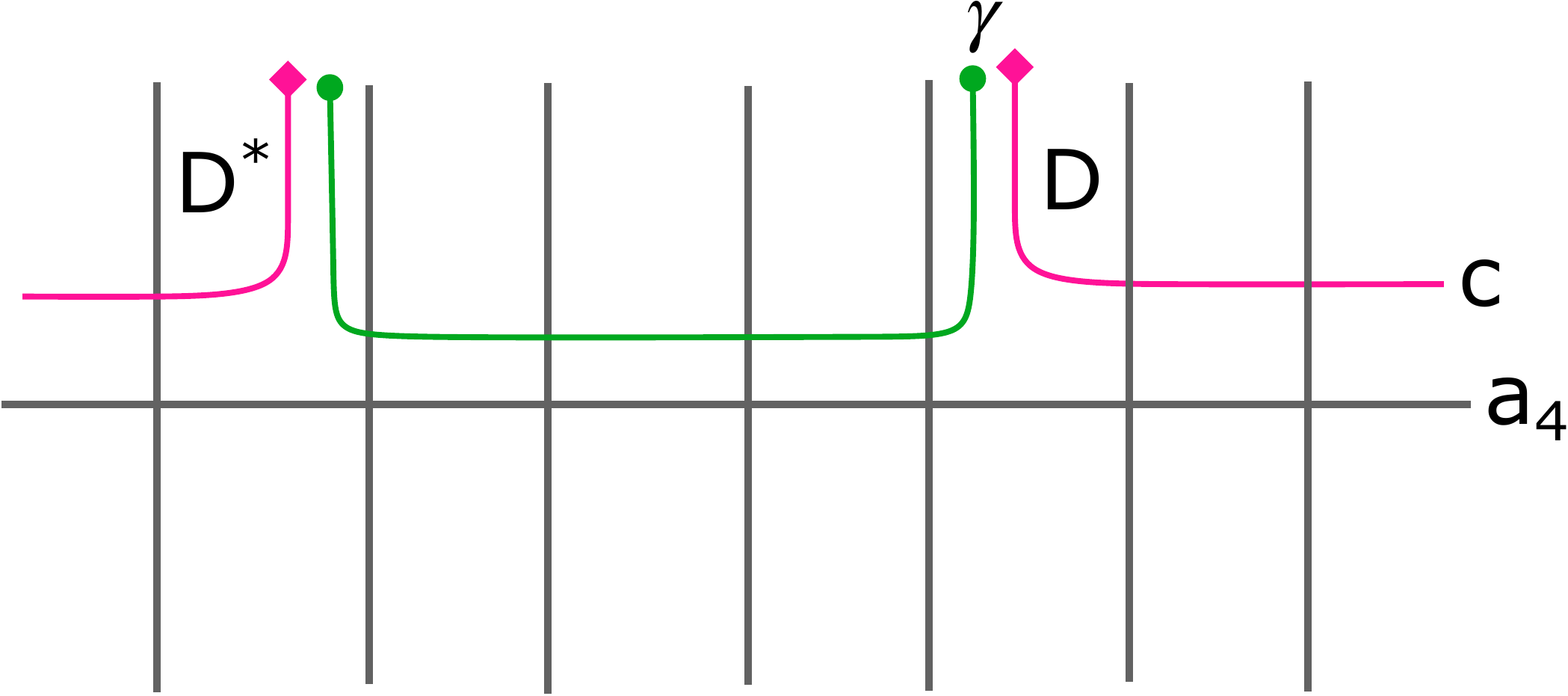}
	\caption{}
	\label{fig:D_same_side_i}
     \end{subfigure}
     \hfill
     \begin{subfigure}{0.45\textwidth}
         \centering
	\includegraphics[width=\textwidth]{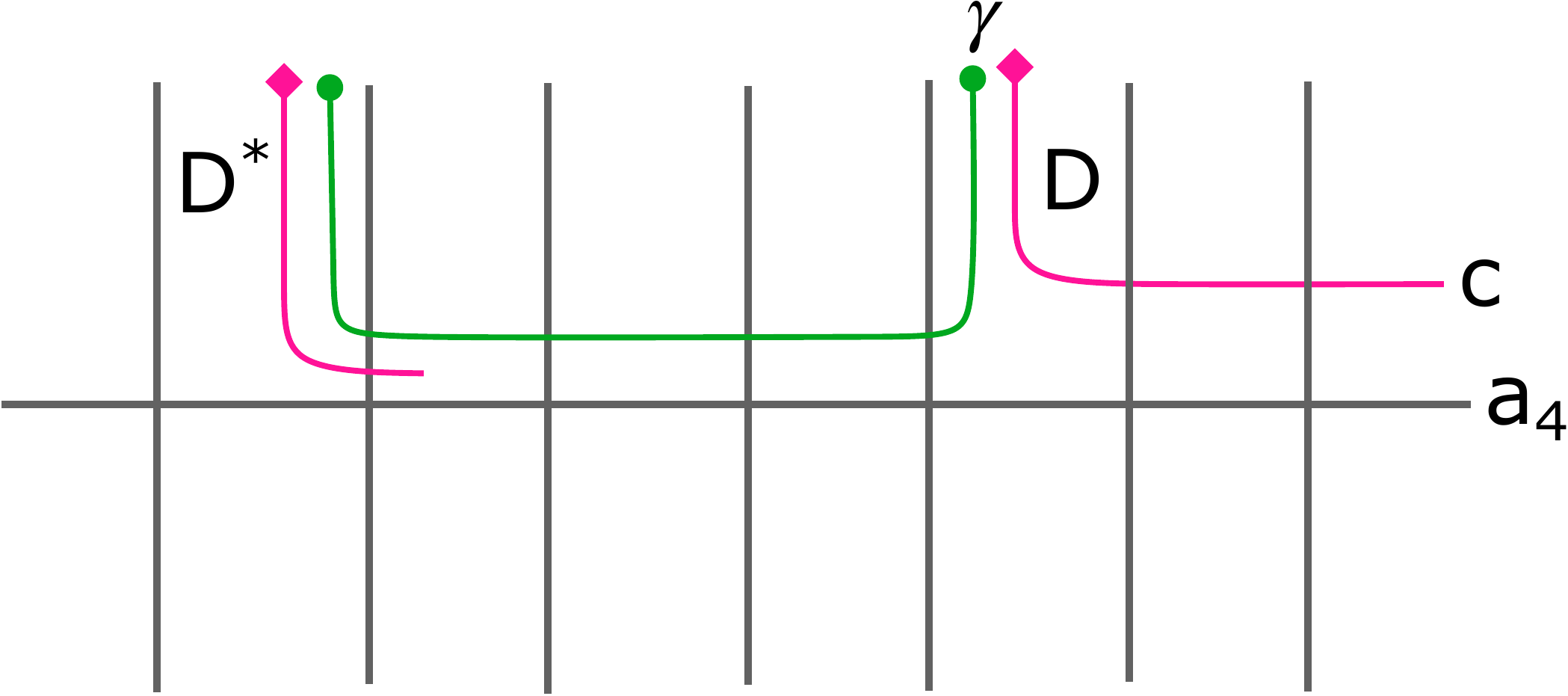}
	\caption{}
	\label{fig:D_same_side_ii}
     \end{subfigure}
     \caption{}
     \label{fig:D_same_side}
\end{figure}

We give the isotopy for the case when $T^*$ is inside $c$.  Let $c_D$ and $c_{T^*}$ be any two points on the interior of the arcs $c \cap D$ and $c \cap T^*$, respectively. Let $\tilde c$ be the component of $c \setminus \{c_D \cup c_{T^*}\}$ that contains the point $c \cap a_4$. Let $\zeta_1$ be the closed arc $c \setminus \tilde c$. Recall that $D = T_{i_0}$. If $T^* = T_{i_0 - s}$, let $\zeta_2$ be the arc passing through $T_{i_0 - s}$, $T_{i_0 - s+1}$, \dots, $T_{i_0}$ parallel to $a_4$ and having end points $c_D$ and $c_{T^*}$. Let $\zeta$ be the curve on $S_g$ formed from the concatenation of $\zeta_1$ and $\zeta_2$. 

We now show that $\zeta$ has to be a trivial curve on $S_g$. By construction, we have $\zeta \cap a_4 = \phi$. Since $\zeta_1$ is an arc of $c$, $\zeta_1 \cap \delta = \phi$. Since $T^*$ is inside $c$ and hence, inside $\delta^{p,q}$, it follows that $\zeta_2 \cap \delta = \phi$. Thus, $\zeta \cap \delta = \phi$. If $\zeta$ is non-trivial it contradicts the fact that $d(\delta, a_4) \geq 3$.
 
Since $\zeta$ is trivial, we have that $\zeta_1$ is isotopic to $\zeta_2$ by an isotopy, say $L'$. We perform an isotopy, $L$, of $c$ such that $L(\tilde c) = \tilde c$ and $L(c \setminus \tilde c) = L'(\zeta_1) = \zeta_2$. Thus $L(c)$ is a standard single strand curve on $S_g$ whose strand has its end points in $T$ and $T^*$. 

A similar proof follows for the case if $D^*$ is inside $c$ and $T^*$ is outside $c$ by reversing the roles of $T^*$ with $D^*$ and $D$ with $T$.

\begin{figure}
     \centering
     \begin{subfigure}{\textwidth}
         \centering
	\includegraphics[scale=0.32]{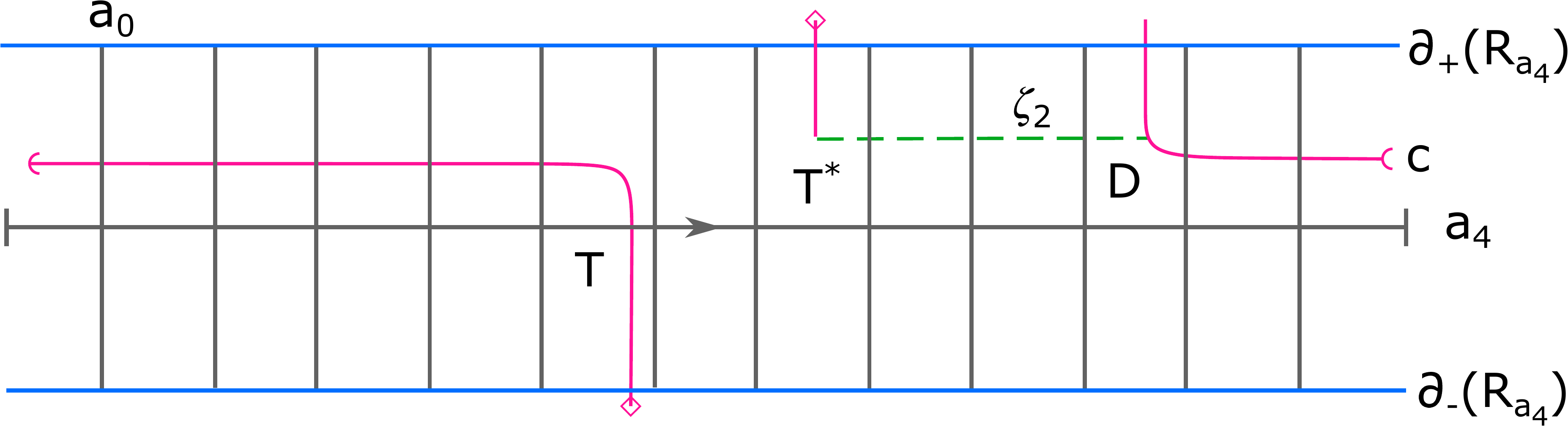}
	\caption{}
	\label{fig:zeta_i}
     \end{subfigure}
     \hfill
     \begin{subfigure}{\textwidth}
         \centering
	\includegraphics[scale=0.32]{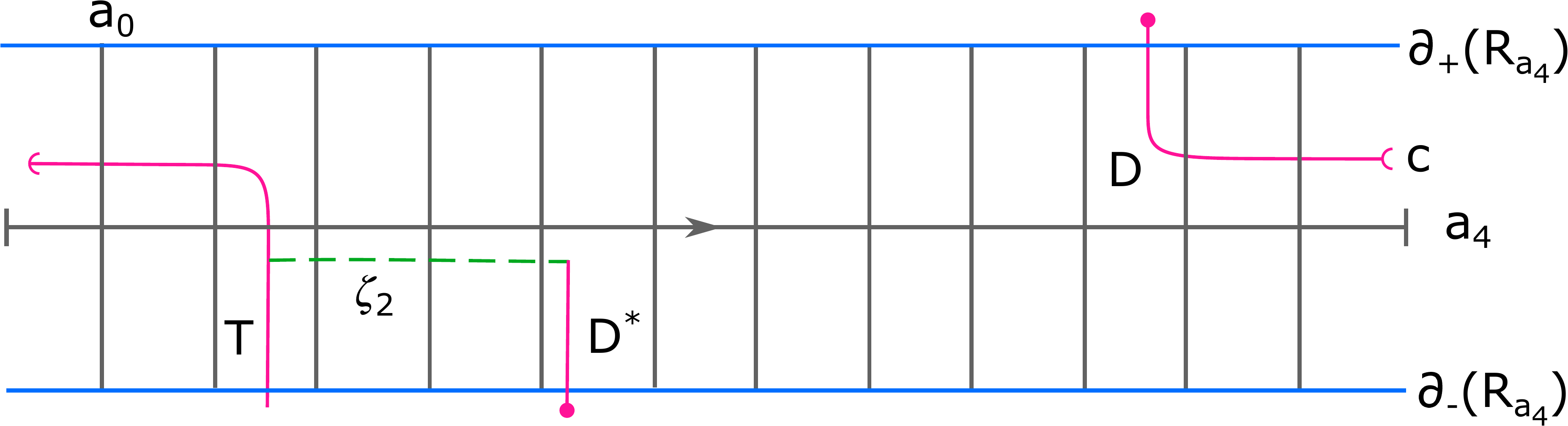}
	\caption{}
	\label{fig:zeta_ii}
     \end{subfigure}
     \caption{}
     \label{fig:zeta}
\end{figure}

\end{proof}

\begin{lem}
\label{lem:nonrect}
If $c$ is a standard single strand curve then, there exists a representative of $c$ such that the end points of its strand lies in a top and a bottom bucket of a non-rectangular component of $S_g \setminus (a_0 \cup a_4)$.
\end{lem}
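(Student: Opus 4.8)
The plan is to first pin down where the strand of $c$ must sit, and then to remove rectangularity by pushing $c$ across $4$-gon regions of $S_g\setminus(a_0\cup a_4)$, one at a time, until the strand's endpoints land in a non-rectangular component.

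\textbf{Step 1 (locating the strand).} Since $c$ is a standard single strand curve, $i(c,a_4)=1$, hence $i(c,\partial_+(R_{a_4}))=i(c,\partial_-(R_{a_4}))=1$, so in minimal position $c\cap R_{a_4}$ consists of a single strand $c'$ running from $\partial_+(R_{a_4})$ to $\partial_-(R_{a_4})$; write $c_+:=c'\cap\partial_+(R_{a_4})$, lying in a top bucket $T_d$, and $c_-:=c'\cap\partial_-(R_{a_4})$, lying in a bottom bucket $B_{d'}$. The complementary arc $e:=c\setminus\mathrm{int}(c')$ avoids $\mathrm{int}(R_{a_4})$, hence avoids $a_4$, and avoids $a_0$ because $c\cap a_0\subset R_{a_4}$; so $e$ lies in a single component of $S_g\setminus(a_0\cup a_4)$, and having $c_+,c_-$ as its endpoints this component contains both $T_d$ and $B_{d'}$. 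Call it $\mathcal D$. If $\mathcal D$ is non-rectangular we are done, so assume $\mathcal D$ is a $4$-gon; then $T_d$ and $B_{d'}$ are precisely the two buckets of $\mathcal D$, and $c\cap\overline{\mathcal D}$ is a single arc $A$ whose two endpoints lie on $\partial\mathcal D$ (one on an $a_4$-edge, one on an $a_0$-edge, these two edges being consecutive in $\partial\mathcal D$).

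\textbf{Step 2 (the push).} I would then isotope $c$ inside a neighbourhood of $\overline{\mathcal D}$, sliding the arc $A$ across the disc $\overline{\mathcal D}$ and dragging the point $c'\cap a_4$ together with the endpoints $c_\pm$ across, so that afterwards the strand re-enters $R_{a_4}$ through the pair of buckets attached to a neighbouring component $\mathcal D'$ of $S_g\setminus(a_0\cup a_4)$ across one edge of $\mathcal D$. Because $\overline{\mathcal D}$ is a disc all of whose $a_0$- and $a_4$-edges are its four boundary edges, this slide forces $c$ to cross no extra arcs of $a_0$ or of $a_4$ lying outside $R_{a_4}$ that it was not already crossing; in particular it is realised by an honest ambient isotopy of $c$, and the representative obtained is again a standard single strand curve with its strand endpoints now in the two buckets of $\mathcal D'$. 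Moreover the portion of $c$ lying in $\overline{\mathcal D}$ afterwards meets at most one of the old buckets $T_d,B_{d'}$, so a non-negative complexity (the number of buckets through which the strand passes, or equivalently a suitable count of $R_{a_4}$-returns) strictly decreases.

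\textbf{Step 3 (termination) and the main obstacle.} Since that complexity is a non-negative integer, iterating Step 2 must stop, and it can only stop at a representative whose strand endpoints lie in a non-rectangular component — which is the assertion of the lemma. Equivalently, one may argue by the bookkeeping that a maximal chain of distinct $4$-gons, consecutive ones sharing an edge, cannot close into a cycle: a cyclic chain would carry a simple closed curve disjoint from $a_0$ (resp.\ from $a_4$) meeting $a_4$ (resp.\ $a_0$) in at most the length of the chain, which for short chains contradicts $d(a_0,a_4)=4$ together with the bound $i_{min}(g,3)\geq 4$ already invoked in the excerpt, while self-coincidence of two of the chain's $4$-gons, or a $4$-gon glued to itself, is excluded by Lemma \ref{lem:self_gluing}; and since $a_0$ and $a_4$ fill $S_g$ with $g\geq 2$, an Euler-characteristic count ($|a_0\cap a_4|$ four-valent vertices, so $2|a_0\cap a_4|$ edges; all faces $4$-gons would force $\chi(S_g)=0$) guarantees a non-rectangular component exists for the chain to reach. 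The step I expect to be the genuine obstacle is Step 2: one must set up the ``slide across the $4$-gon'' carefully enough to see that it really is an isotopy of $c$ that keeps minimal position with $a_0$, $a_4$, $\taf$ and $\delta$ (and returns $c\cap a_0$ into $R_{a_4}$), and that the bucket count indeed drops — and it is exactly the hypothesis that the intermediate region is a $4$-gon that makes this work, since a region with more sides would force the strand to cross additional arcs of $a_0$ or $a_4$ during the push and the argument would break down.
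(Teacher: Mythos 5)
Your overall strategy (slide the single strand along $R_{a_4}$ across rectangular complementary regions until its endpoints land in a non-rectangular component) is the same as the paper's, but there is a genuine gap at exactly the point you flag as the ``obstacle,'' and your Step 3 does not close it. The curve $c$ lies in $B_1(\delta)$, so every admissible representative must stay disjoint from $\delta$; the strand of $c$ sits inside a $\delta$-track $\delta^{p,q}$, and any push of the point $c\cap a_4$ along $a_4$ can only continue as long as it does not have to cross the bounding strands $\delta^p$, $\delta^q$. Your termination argument is global: a chain of $4$-gons cannot cycle and an Euler-characteristic count shows some non-rectangular face of $S_g\setminus(a_0\cup a_4)$ exists. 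That does not show such a face is reachable \emph{before the push meets a strand of $\delta$}; the chain of rectangles you traverse could perfectly well be cut off by $\delta^q$ first, and then your iteration stops at a representative that either meets $\delta$ or is no longer in the required position. This is precisely what the paper's proof spends most of its effort on: assuming every $T_j$ with $d\le j\le q$ lies in a rectangle, it shows the union of these rectangles is a rectangle $R$ with parallel (coherently oriented) $a_4$-edges, forces $B_{q-l}$ into the rectangle $\mathcal{R}$ containing $T_q$, and then derives a contradiction from the rectified position of $\delta$ (the strand $\delta^q$ covers every arc of $a_4\setminus a_0$ except one, and there is no point of $\delta\cap a_0$ available on $a_0^{q-l}$ for the endpoint $x$ of $\delta\cap\mathcal{R}$ to connect to). Only after securing a non-rectangular component with a top bucket $T_{d_0}$, $d<d_0\le q$, \emph{inside the same $\delta$-track} does the paper perform one isotopy $L$ moving $c\cap a_4$ in the increasing direction of $a_4$ up to $T_{d_0}$, and the remark ``$T_{d_0}$ is in the same $\delta$-track as $D$ and $T$'' is exactly what guarantees minimal position with $\delta$ is preserved. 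Without an argument of this kind your Step 2/Step 3 combination does not prove the lemma.

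Two smaller points: your ``complexity'' (number of buckets the strand passes through) is not well defined as stated, since a standard single strand always meets exactly one top and one bottom bucket --- the paper avoids any induction by doing a single slide once the target bucket $T_{d_0}$ is identified; and the direction of the push matters (it must go toward the side of the track where the non-rectangular component was produced), which your iteration does not control.
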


\begin{proof}
Let us suppose $\mathcal{T}$ is a rectangle. From the notations defined in the previous section, it follows that the four vertices of $\mathcal{T}$ are $w_{d}$, $w_{d+1}$, $w_{d-l}$ and $w_{d-l+1}$. Since $c$ has a single strand, we have that $i(c, a_4) =1$. As $c$ and $a_0$ are in minimal position, the two parallel edges corresponding to the $a_0$-arcs in $\mathcal{T}$ are as follows : one edge is between $w_{d-l}$ and $w_d$ and the other edge is between $w_{d-l+1}$ and $w_{d+1}$. A schematic of the bigon formed between $c$ snd $a_0$ if the $a_0$-edges in $\mathcal{T}$ are otherwise is shown in figure \ref{fig:lemma_6}. We note that, since $\delta \cap c = \phi$, we have that $c \cap \partial_+(R_{a_4})_{[d, d+1]} = \phi$. This means that $D=T_d$ and $T_q$ don't coincide. 

\begin{figure}
\centering
\includegraphics[scale=0.5]{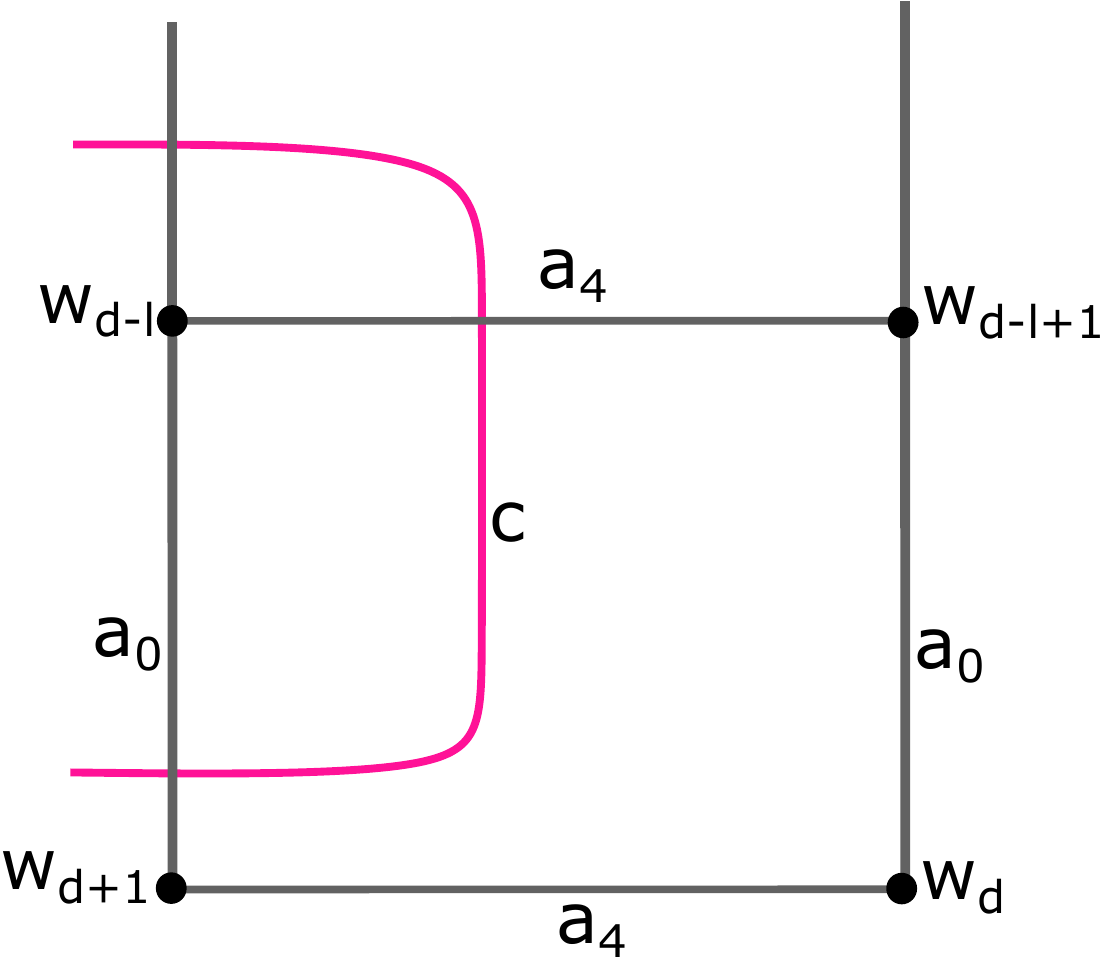}
\caption{The possible bigon formed if the $a_4$ edges in $\mathcal{T}$ aren't parallel.}
\label{fig:lemma_6}
\end{figure}

We first show that there exists $j$ such that $d \leq j \leq q$ and $T_j$ is a top bucket of some non-rectangular component of $S_g \setminus(a_0 \cup a_4)$. On the contrary assume that $T_j$ for every $d \leq j \leq q$ is a top bucket of some rectangular disc. Let $T_q$ be contained in the rectangular disc $\mathcal{R}$ and let the component of $\delta^q \cap \mathcal{R}$ with end points on $a_0$ and $\partial_+(R_{a_4})$ be $\delta^q_*$. Then the $a_4$-edges in $\mathcal{R}$ are $a_{4_{[w_q, w_{q+1}]}}$ and $a_{4_{[w_{q-l}, w_{q-l+1}]}}$. The union of the rectangular components of $S_g \setminus(a_0 \cup a_4)$ containing $T_j$ for every $d \leq j \leq q$ is again a rectangle, say $R$. In particular, $\mathcal{T}$ and $\mathcal{R}$ are contained in $R$. Since the $a_4$ edges of $\mathcal{T}$ are in oriented parallely, it gives that the $a_4$ edges of $R$ are also oriented parallely. In particular, the $a_4$ edges of $\mathcal{R}$ are oriented parallely. As a result $B_{q-l} \subset \mathcal{R}$. Thus the component of $\delta \cap \mathcal{R}$ that contains $\delta^q_*$ has an end point on $a_{4_{[w_{q-l}, w_{q-l+1}]}}$, say $x$. A schematic of the above description is as in figure \ref{fig:rect_iso}. Since $\delta^q \setminus a_0$ covers every arc of $a_4 \setminus a_0$ except $a_{4_{[w_q, w_{q+1}]}}$, there is an arc of $\delta^q$ parallel to $a_{4_{[w_{d-l}, w_{q-l+1}]}}$ in $R_{a_4}$. Since there are no points of $\delta \cap a_0$ on $a_0^{q-l}$ between $\delta^q \cap a_0^{q-l}$ and $w_{q-l}$, there is no possibility of $x$ joining to any arc of $\delta \setminus a_0$. Thus, we have that there is a $T_j$ for some $d \leq j \leq q$ such that $T_j$ is contained in some non-rectangular component of $S_g \setminus (a_0 \cup a_4)$.

\begin{figure}
\centering
\includegraphics[scale=0.17]{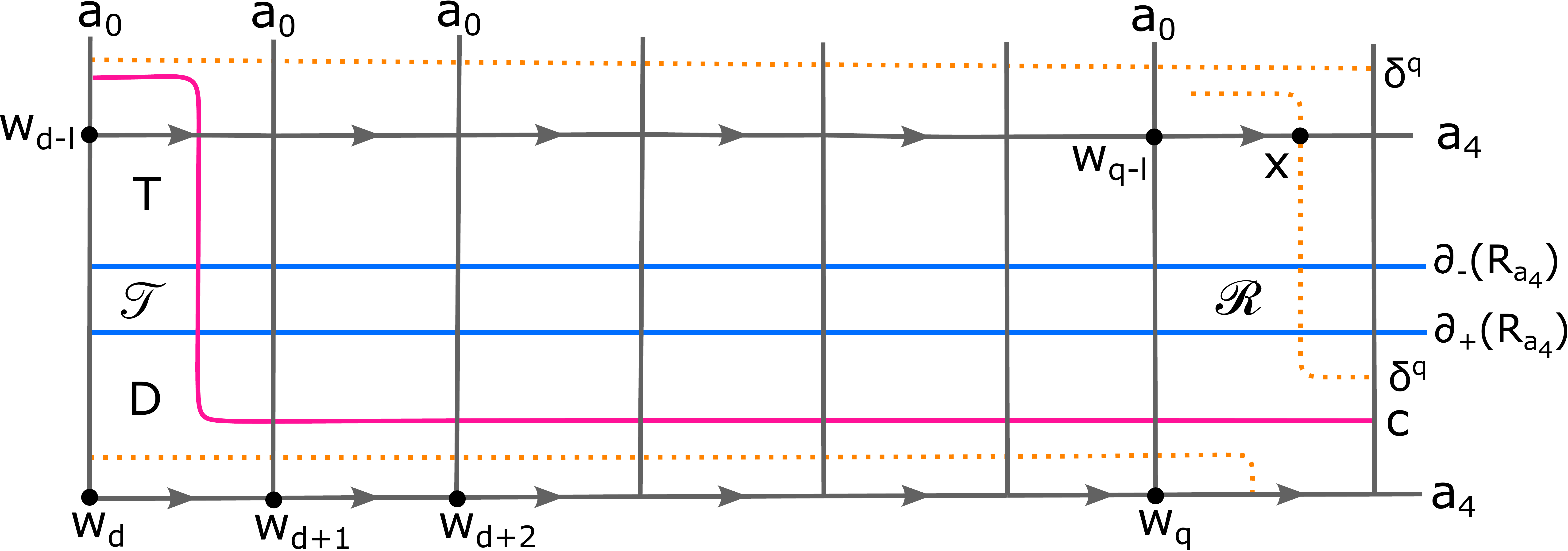}
\caption{A schematic of $c$}
\label{fig:rect_iso}
\end{figure}

Consider the non-rectangular disc $H$ such that there is a top bucket $T_{d_0}$ of $H$ where $d < d_0 \leq q$ and $T_j$ for $d \leq j < d_0$ are top buckets of rectangular discs. Since $T_j$ are rectangles for $d \leq j < d_0$, we have that $B_{d-l+i}$ for $0 \leq i < d_0 -d$ are bottom buckets of rectangles. Further, for $0 \leq i < d_0-d$, $T_{d+i}$ and $B_{d-l+i}$ are buckets of the same rectangular disc. Consider an isotopy, $L$, of $c$ that moves the point $c \cap a_4$ along the increasing direction of $a_4$ from $a_{4_{[w_{d-l}, w_{d-l+1}]}}$ to $a_{4_{[w_{d_0-l}, w_{d_0-l+1}]}}$ such that $L(c)$ is in minimal position with $a_0$ and $a_4$. A schematic of $L$ is shown in figure \ref{fig:rect_iso_L}. Since $T_{d_0}$ is in the same $\delta$-track as $D$ and $T$, $L(c)$ remains to be in minimal position with $\delta$. 
\end{proof}

\begin{figure}
\centering
\includegraphics[scale=0.32]{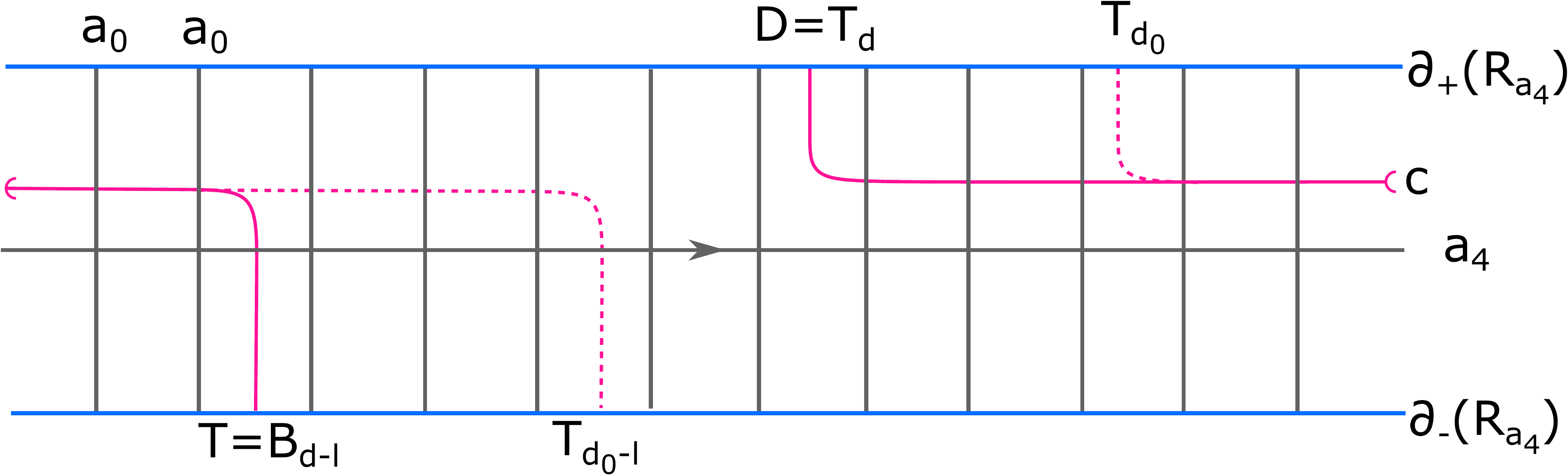}
\caption{The dotted line is a schematic of $L(c)$}
\label{fig:rect_iso_L}
\end{figure}

As a consequence of lemma \ref{lem:nonrect}, we can now assume that the strand of $c$ has its end points in a top and bottom buckets, $A_t$, $A_b$ of a non-rectangular disc, say $H$. If $H$ is a $2n$-gon with $n \geq 4$, then by lemma \ref{lem:bucket_lemma} there exists either a top or, bottom bucket $A$ of $H$ outside the delta track $\delta^{p,q}$. We will assume that $A$ is a top bucket. When $A$ is a bottom bucket, similar conclusions can be made about $c$ and $a_0$ by interchanging the roles of $A_t$ and $A_b$ in the following arguments. 

Let $\gamma$ be a scaling curve from $A_t$ to $A$. Let $\gamma'$ be the arc in $\gamma \cap H$ which contains $(\gamma \cap A) \cup (\gamma \cap A_t)$. By corollary \ref{lem:gamma_fill}, we have that $\gamma$ and $a_0$ fill. The construction of $\gamma$ gives that every arc in $(\gamma \setminus a_0) \setminus \gamma'$ is covered by some arc in $c \setminus a_0$. Let $\Lambda_1$ and $\Lambda_2$ be the two polygonal components of $S_g \setminus (a_0 \cup \gamma)$ containing the two edges corresponding to $\gamma'$. Let $\Lambda$ be the component formed by gluing $\Lambda_1$ and $\Lambda_2$ along $\gamma'$. If $\Lambda_1$ and $\Lambda_2$ are distinct, then $\Lambda$ is a disc. It then follows that the components of $(S_g \setminus a_0) \setminus c$ correspond to the components of $\{(S_g \setminus a_0) \setminus (\gamma \setminus \gamma')\} \cup \Lambda$. Thus, $c$ and $a_0$ fill whenever $\Lambda_1 \neq \Lambda_2$. If $\Lambda_1 = \Lambda_2$, then $\Lambda$ is an annulus. Note that by construction of $\gamma$ there exists arcs of $a_4 \setminus a_0$ that covers every arc of $(\gamma \setminus a_0) \setminus \gamma'$. Consider a representative of $a_4$ such that for every arc of $(\gamma \setminus a_0) \setminus \gamma'$, the respective arc of $a_4 \setminus a_0$ which covers it also overlaps it. Let $\mathcal{P}$ be the central curve of the annulus $\Lambda$. We have that $\mathcal{P}$ will be an essential curve on $S_g$. If not, $a_0$ ceases to be connected. Let the two boundary components of $\Lambda$ be $\partial_+(\Lambda)$ and $\partial_-(\Lambda)$.

Let $Y$ be a component of $S_g \setminus (a_0 \cup a_4)$ such that one of the arcs in $(a_4 \setminus a_0) \cap Y$, say $y$, has one of its end points on $\partial_+(\Lambda)$ and another on $\partial_-(\Lambda)$. We have that $(a_0 \cap a_4) \cap Y \subset \partial(\Lambda)$. Since $\gamma \cap a_4 = \phi$, the arcs in $(a_4 \setminus a_0) \cap Y$ are either in the interior of $\Lambda$ or, in $\partial(\Lambda)$. Thus, $Y$ is a polygon in the annulus $\Lambda$ such that its edge $y$ is in the interior of $\Lambda$. It follows that there is another arc in $(a_4 \setminus a_0) \cap Y$ with its end points on $\partial_+(\Lambda)$ and $\partial_-(\Lambda)$.

Since $\mathcal{P}$ is a curve on $S_g$ with $\mathcal{P} \cap a_0 = \phi$ and $a_0$ and $a_4$ fill $S_g$, there must exist an arc $y_1$ in $a_4 \setminus a_0$ such that $y_1 \cap \mathcal{P} \neq \phi$. As $\mathcal{P}$ is the core curve of $\Lambda$, the end points of $y_1$ must lie on $\partial_+(\Lambda)$ and $\partial_-(\Lambda)$. Let $Y_1$ be one of the discs of $S_g \setminus (a_0 \cup a_4)$ containing an edge corresponding to $y_1$. By the argument in the previous paragraph, there exists another arc, $y_2$ in $a_4 \setminus a_0$ with end points on $\partial_+(\Lambda)$ and $\partial_-(\Lambda)$. Let $Y_2$, if exists, be the other disc which contains the other edge corresponding to $y_2$. We apply this process inductively to obtain all the discs $Y_1$, $Y_2$, \dots, $Y_l$ with edges $y_1$, $y_2$, \dots, $y_l$ having end points on distinct components of $\partial(\Lambda)$. If any $y_i$ is not inside $c$ we have that an arc of $c \setminus a_0$ covers this particular $y_i$. Thus, $c$ and $a_0$ fill $S_g$. If all $y_i's$ lie inside $c$, then $\taf$, $\delta$, $c$, $\mathcal{P}$, $a_0$ is a geodesic of distance $4$. If such a geodesic of length $4$ exists with $l=4$, a schematic of $R_{a_4}$ and $\mathcal{P} \cap R_{a_4}$ is as in figure \ref{fig:possible_p} upto renaming of the components $Y_i$ for $1 \leq i \leq 4$. 

\begin{figure}
\centering
\includegraphics[scale=0.32]{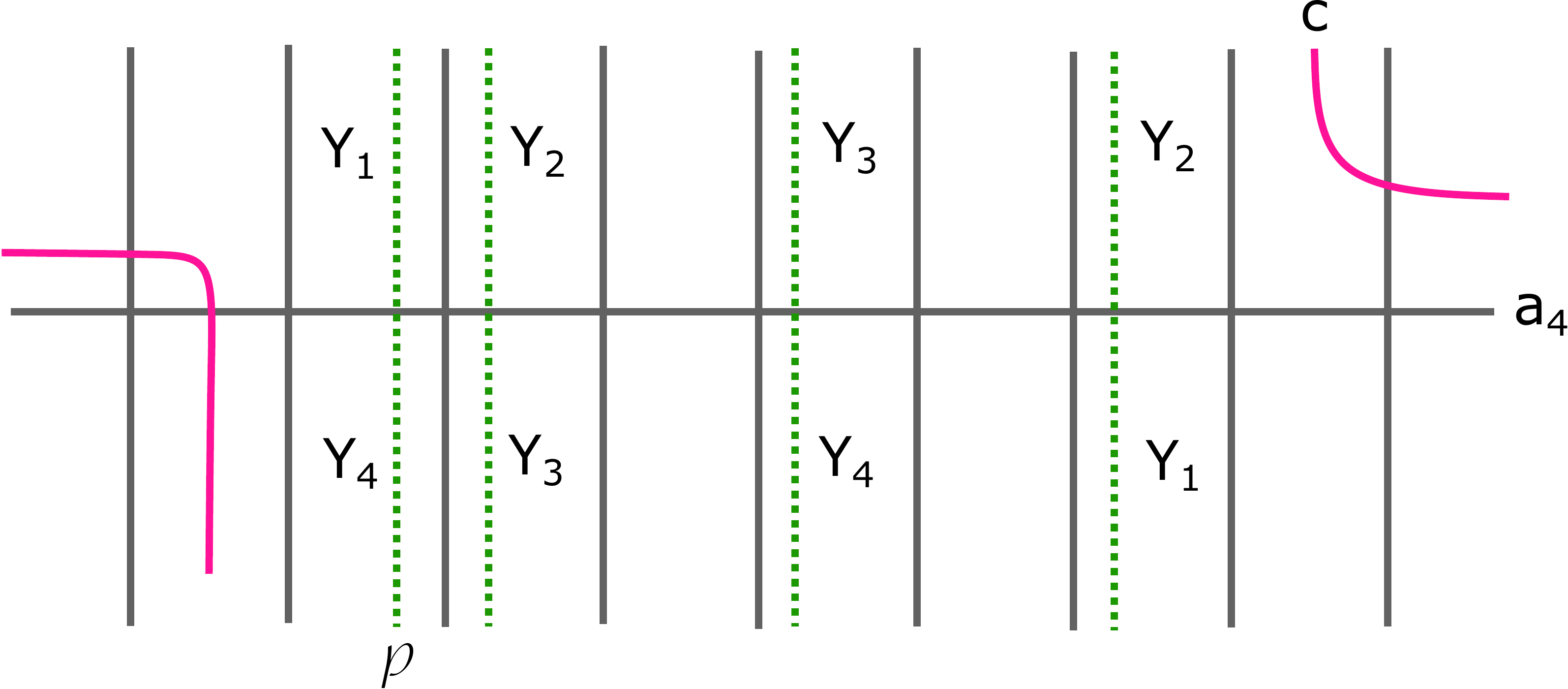}
\caption{A schematic of $\mathcal{P}$ (dotted lines) if $Y_1$, \dots, $Y_4$ occur as above in $R_{a_4}$.}
\label{fig:possible_p}
\end{figure}

If $i(c, a_4) = 1$ but $c$ is not a standard single strand curve, by lemma \ref{lem:T_D_inside} neither $T^*$ nor $D^*$ are inside $c$. Then $T^*$ can be either a top or, bottom bucket. If $T^*$ is a bottom bucket, consider the scaling curve $\gamma$ as in \ref{fig:gamma_ai} and let the arc in $\gamma \setminus a_0$ between $T$ and $T^*$ be $\gamma'$. It can be seen from \ref{fig:gamma_ai} that by virtue of the choice of $\gamma$, $c \setminus a_0$ contains a subset of arcs that cover all the arcs in $(\gamma \setminus a_0) \setminus \gamma'$. Since $T^*$ doesn't lie inside $c$, there exists a pair of arcs in $c \setminus a_0$ that almost covers $\gamma'$. This pair of arcs comprises of the following two arcs of $c \setminus a_0$ : firstly the arc that contains the subarc $T^* \cap c$ and secondly, the arc that lies in the top bucket which has a common component of $a_4 \setminus a_0$ with $T^*$. This second arc of $c \setminus a_0$ covers the edge corresponding to $a_4 \cap T^*$. Thus by lemma \ref{lem:almost_fill}, $c \setminus a_0$ forms a filling system of arcs on $S_g \setminus a_0$. If $D^*$ is a top bucket, we construct $\gamma$ as in figure \ref{fig:gamma_aii} and a similar argument as above gives that $c$ and $a_0$ fill $S_g$. If $T^*$ is a top bucket and $D^*$ is a bottom bucket then construct $\gamma$ as in figure \ref{fig:gamma_aiii}. Similar arguments as above along with lemma \ref{lem:almost_fill} concludes that $c$ and $a_0$ fill $S_g$.

\begin{figure}
     \centering
     \begin{subfigure}{\textwidth}
         \centering
	\includegraphics[scale=0.32]{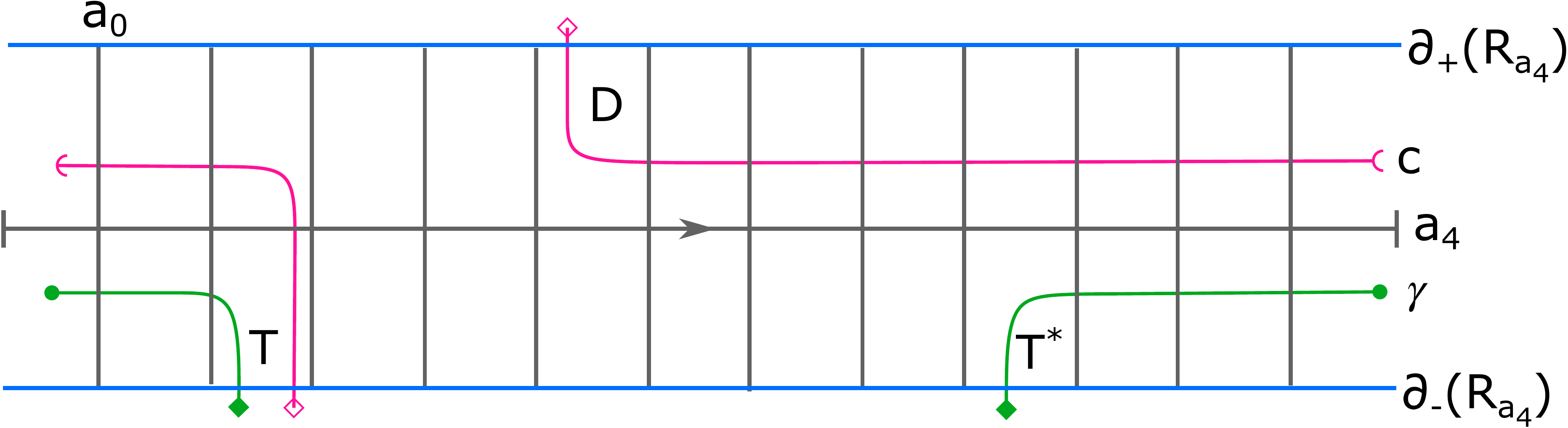}
	\caption{}
	\label{fig:gamma_ai}
     \end{subfigure}
     \hfill
     \begin{subfigure}{\textwidth}
         \centering
	\includegraphics[scale=0.32]{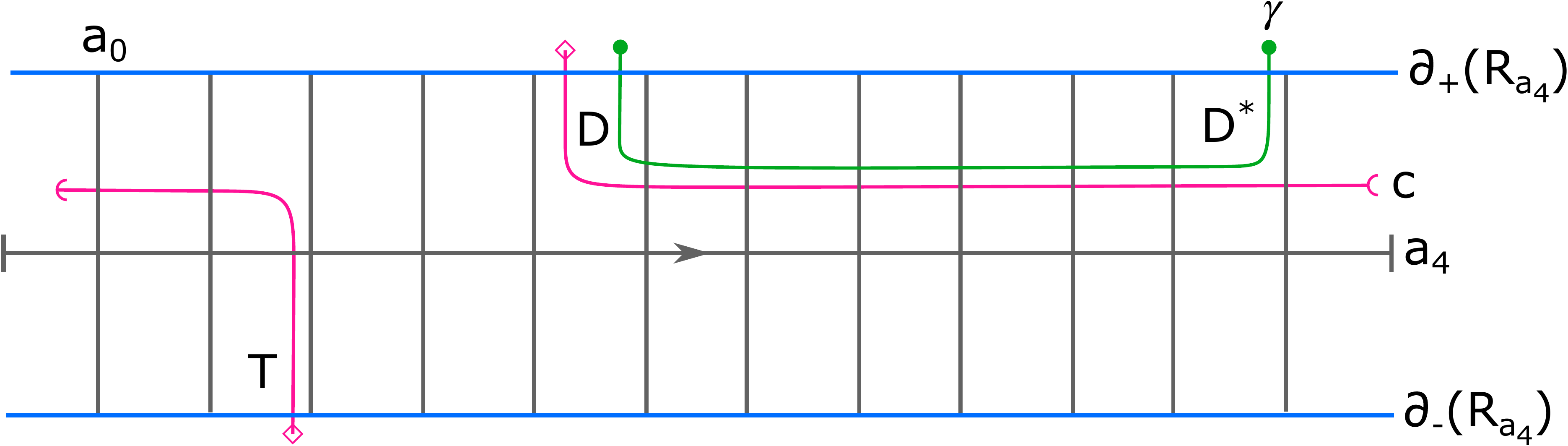}
	\caption{}
	\label{fig:gamma_aii}
     \end{subfigure}
     \caption{}
     \label{fig:gamma_a}
\end{figure}

\begin{figure}
\centering
\includegraphics[scale=0.32]{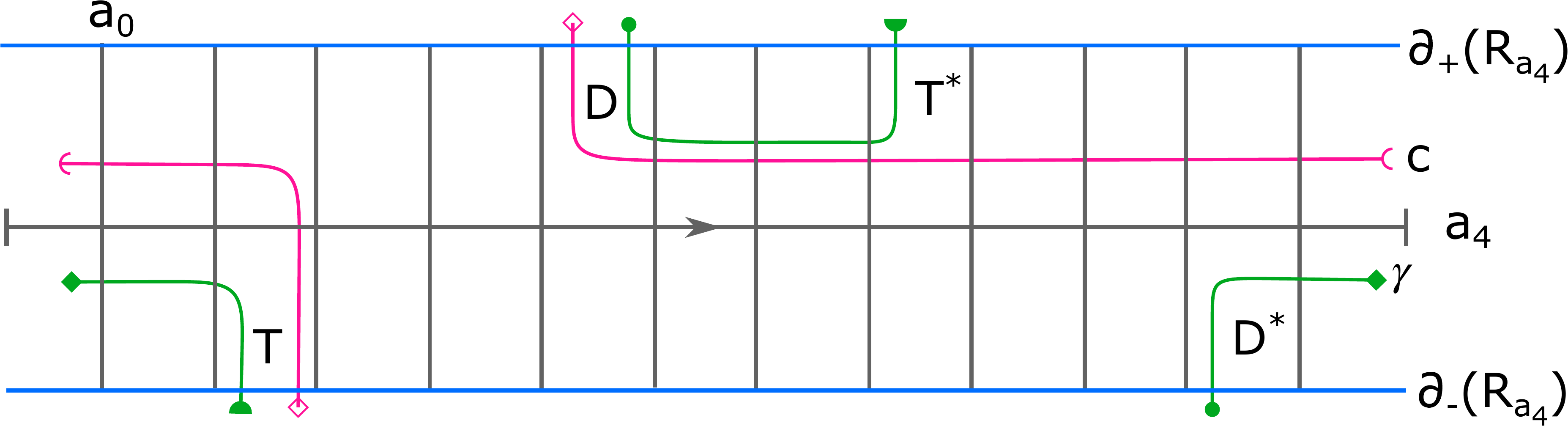}
\caption{}
\label{fig:gamma_aiii}
\end{figure}

\noindent \textbf{Case ii :} Suppose there exists at least two distinct strands, $c'$ and $c''$, of $c$ in $R_{a_4}$. Assume that $c' \cap \partial_+(R_{a_4})$ lie on $\partial_+(R_{a_4})_{[\delta^p, \delta^q]}$ and $c'' \cap \partial_+(R_{a_4})$ lie on $\partial_+(R_{a_4})_{[\delta^r, \delta^s]}$ where $1\leq r< s \leq m$. If either $\delta^p \cap \partial_+(R_{a_4})$ and $\delta^q \cap \partial_+(R_{a_4})$ lie in the same top bucket or, $\delta^r \cap \partial_+(R_{a_4})$ and $\delta^s \cap \partial_+(R_{a_4})$ lie in the same top bucket then by lemma \ref{lem:self_gluing}, $c$ and $a_0$ fills $S_g$. The argument is similar to that of case i when the end points of the strand lie in $T_e$ and $B_e$ for some $e \in K = \{1, \dots, k\}$. We can thus assume that $p \leq r$.

Suppose $p < r$. If $q < r$, then for every $j \in K$, there exists an arc of $(c' \cup c'') \setminus a_0$ that covers $a_4^j$. Thus, $c \setminus a_0$ forms a filling system of arcs in $S_g \setminus a_0$. If $q=r$, then figure \ref{fig:double_strand_c} gives the only instance when there exists a $J \in K$ such that $a_4^J$ isn't covered by an arc of $(c' \cup c'') \setminus a_0$. It follows from lemma \ref{lem:self_gluing} that $c \setminus a_0$ forms a filling system of arcs in $S_g \setminus a_0$.

\begin{figure}
\centering
\includegraphics[scale=0.6]{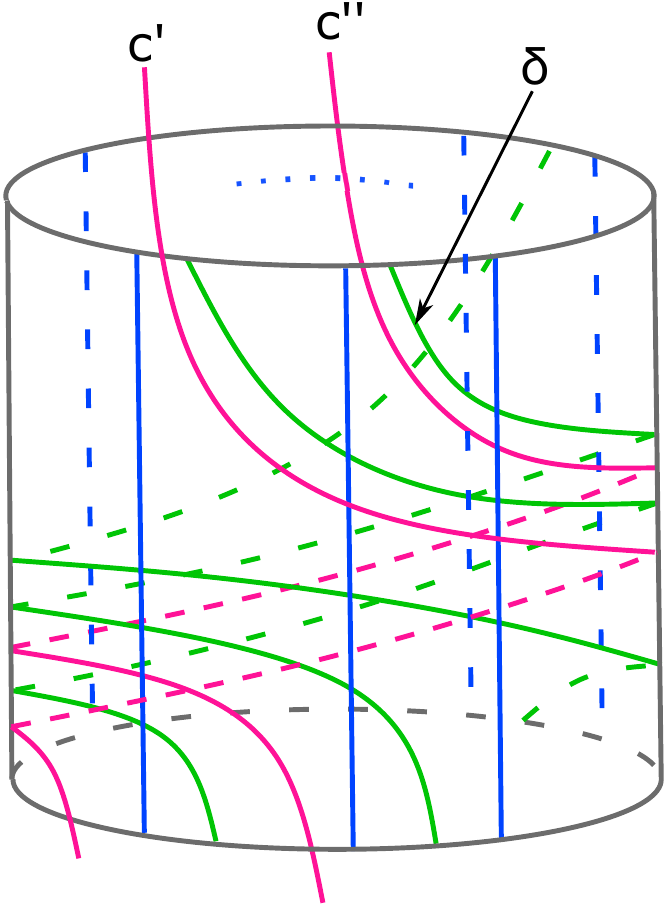}
\caption{A schematic of $c'$ and $c''$ in $R_{a_4}$ when they do not cover every arc of $a_4 \setminus a_0$.}
\label{fig:double_strand_c}
\end{figure}

Suppose $p=r$. We rename $c'$ to be the strand of $c$ such that one of the components of $\partial_+(R_{a_4}) \setminus \{\delta^p, c'\}$ doesn't contain any points of $\partial_+(R_{a_4}) \cap c$ in its interior. Let $z$ be the component of $c \setminus R_{a_4}$ containing $c' \cap \partial_-(R_{a_4})$. Rename $c''$ to be the strand of $c$ such that $z \cap c'' \neq \phi$. We claim that $z \cap c''$ lies on $\partial_+(R_{a_4})$. Let $z'$ be the component of $\partial_-(R_{a_4}) \setminus z$ such that no points of $\partial_-(R_{a_4}) \cap c$ lies in its interior. On the contrary, if $z \cap c''$ lies on $\partial_-(R_{a_4})$ then we get that $z$ is isotopic to $z'$. This follows because the curve obtained by concatenating $z$ and $z'$ is a curve disjoint from $\delta$ and $a_4$. Since $d(\delta, a_4) \geq 3$, this curve has to be non-essential. Therefore, we must have that $z \cap c''$ lies on $\partial_+(R_{a_4})$. Let $\tilde z$ be the component of $c \setminus R_{a_4}$ containing $c' \cap \partial_+(R_{a_4})$. A similar argument ensures that the end point of $\tilde z$ lies in $\partial_-(R_{a_4})$. Thus, following the naming convention of $D$, $D^*$, $T$ and $T^*$ for the strand $c'$ as in case i, we can consider a scaling curve as in figure \ref{fig:gamma_aiii}. Similar arguments as in case i gives that $c$ and $a_0$ fills $S_g$.

\section{Conclusion}
\label{sec:conc}
If $d(a_0, \taf)=4$, then there exists $\delta \in B_1(\taf)$ and corresponding to $\delta$ there exists $c \in B_1(\delta) \cap B_2(\taf)$ such that $a_0$, $p$, $c$, $\delta$, $\taf$ is a geodesic. We consider the representatives of $\delta$ and $c$ to be the ones as described in section \ref{sec:main}. Then a schematic of a possible $p$ in $R_{a_4}$ is as in figure \ref{fig:possible_p}. We now describe an equivalent condition for the existence of $p$ in the form of buckets. Given such a curve $p$, consider the collection $\mathcal{Y}_p$ of all top and bottom buckets in $R_{a_4}$ containing $p$. Since $p \cap a_0 = \phi$, if $T_i \in \mathcal{Y}_p$ for some $i \in \{1, \dots, k\}$ then $B_i \in \mathcal{Y}_p$. Conversely, we define a collection of pairs of top and bottom bucket $\{(T_i, B_i)\}_{i \in I}$ for some $I \subset K = \{1, \dots, k\}$ where for every $T_i$ there exists unique $j \in I$ and $j \neq i$ such that $T_i \cup T_j \subset Y$ (or, $T_i \cup B_j \subset Y$) for some component $Y$ of $S_g \setminus (a_0 \cup a_4)$ as a \textit{stack of buckets}. We note that given a stack of buckets we can always construct a curve disjoint from $a_0$. The pattern in figure \ref{fig:possible_p} can be described as the inside of $c$ containing a stack of buckets. For any given $c \in B_2(\taf)$ if the inside of $c$ contains a stack of buckets, we say \textit{$c$ has the stacking property}. Thus, we conclude that :
\begin{lem}
\label{lem:geq4}
$d(a_0, \taf) = 4$ if and only if there exists $c \in B_2(\taf)$ such that $c$ has the stacking property.
\end{lem}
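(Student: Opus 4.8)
The plan is to prove the two implications separately, drawing the forward direction from the case analysis of Section~\ref{sec:main} and the reverse direction from a one-line distance estimate, and using throughout that $4 \le d(a_0, \taf) \le 6$.

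\emph{Reverse implication.} Suppose $c \in B_2(\taf)$ has the stacking property, i.e.\ the inside of $c$ contains a stack of buckets $\{(T_i, B_i)\}_{i \in I}$. First I would produce the curve $\mathcal{P}$ promised right after the definition of a stack of buckets: in each component $Y$ of $S_g \setminus (a_0 \cup a_4)$ realising a pair of the stack, take the arc of $Y$ joining the two buckets of that pair, and close these arcs up with arcs running through the bucket pairs $T_i \cup B_i$ inside $R_{a_4}$. By construction $\mathcal{P} \cap a_0 = \phi$; because every bucket pair of the stack lies inside every strand of $c$ and the connecting arcs inside the $Y$'s can be pushed off $c$, one also arranges $\mathcal{P} \cap c = \phi$. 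Since $\mathcal{P}$ crosses the $a_4$-edge $a_{4_{[w_i, w_{i+1}]}}$ lying in the interior of each $T_i \cup B_i$ it uses, $i(\mathcal{P}, a_4) \neq 0$, so $\mathcal{P}$ is essential. The triangle inequality then gives $d(a_0, \taf) \le d(a_0, \mathcal{P}) + d(\mathcal{P}, c) + d(c, \taf) \le 1 + 1 + 2 = 4$, and together with $d(a_0, \taf) \ge 4$ this forces $d(a_0, \taf) = 4$.

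\emph{Forward implication.} Suppose $d(a_0, \taf) = 4$. A geodesic then reads $\taf, \delta, c, p, a_0$ with $\delta \in B_1(\taf)$, $c \in B_1(\delta) \cap B_2(\taf)$ and $p \in B_1(c) \cap B_1(a_0)$; in particular $d(c, a_0) \le 2$, so $c$ and $a_0$ do not fill $S_g$. Fixing the representative of $c$ relative to $\delta$ as normalised in Section~\ref{sec:main}, I would invoke the dichotomy established there: Cases i and ii run over all configurations of the strands of $c$ in $R_{a_4}$, and in each one it is shown that \emph{either} $c \setminus a_0$ forms a filling system of $S_g \setminus a_0$ (via Lemmas~\ref{lem:self_gluing}, \ref{lem:cover}, \ref{lem:bucket_lemma}, \ref{lem:nonrect} and \ref{lem:almost_fill}), whence $c$ and $a_0$ fill, \emph{or} $c$ is isotopic to a standard single strand curve ending in a top and a bottom bucket of a non-rectangular disc, with a scaling curve producing an annulus whose core $\mathcal{P}$ makes $\taf, \delta, c, \mathcal{P}, a_0$ a length-$4$ geodesic. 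Since $c$ and $a_0$ do not fill, we are in the second alternative; and there, as discussed at the beginning of this section and depicted in Figure~\ref{fig:possible_p}, the arcs of $a_4 \setminus a_0$ met by $\mathcal{P}$ all lie inside $c$ and the bucket pairs they cut out form a stack of buckets contained in the inside of $c$. Hence $c$ has the stacking property.

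\emph{Main obstacle.} The hard part is not this statement but the input it quotes, namely the Section~\ref{sec:main} case analysis, whose substance is that in \emph{every} configuration other than the single stacking one the arcs of $c \setminus a_0$ (possibly augmented by an almost-covering pair) fill $S_g \setminus a_0$, forcing $d(c, a_0) \ge 3$. Granting that, the present lemma is bookkeeping plus the short estimate of the reverse direction; the only points still needing care here are that the curve $\mathcal{P}$ attached to a stack of buckets can be realised disjoint from $c$, not merely from $a_0$ — which uses the placement of the stack inside all the strands of $c$ — and that $\mathcal{P}$ is essential, which follows from $i(\mathcal{P}, a_4) \neq 0$.
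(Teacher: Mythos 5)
Your proposal follows essentially the same route as the paper: the forward direction defers to the Section \ref{sec:main} case analysis (a non-filling $c$ forces the standard single strand configuration of figure \ref{fig:possible_p}, i.e.\ the stacking property), and the reverse direction builds from the stack a curve disjoint from $a_0$ and $c$ giving a length-$4$ path, exactly as in the paper's Section \ref{sec:conc} discussion. The only minor deviation is your justification that $\mathcal{P}$ is essential via $i(\mathcal{P}, a_4) \neq 0$ — which, to be complete, needs a short bigon argument using the minimal position of $a_0$ and $a_4$ — where the paper instead argues via the connectedness of $a_0$; this does not change the approach and both are at a comparable level of detail.
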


From our analysis of curves $c \in B_2(\taf) \cap B_1(\delta)$ in section \ref{sec:main}, we have that if $c$ is not a standard single strand curve then $c$ and $a_0$ always fill. Thus we have the following theorem :

\begin{thm}
\label{thm:geq5}
Let $a_0$ and $a_4$ be curves on $S_g$ such that $d(a_0, a_4) = 4$ and the components of $S_g \setminus (a_0 \cup a_4)$ doesn't contain any hexagons. Then, $d(a_0, \taf) \geq 5$ if and only if there doesn't exist any standard single strand curve $c \in B_2(\taf)$ having the stacking property.
\end{thm}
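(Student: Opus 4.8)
The plan is to prove Theorem~\ref{thm:geq5} by reducing the general question ``$d(a_0,\taf)\geq 5$?'' to a question about a very restricted class of curves $c$, and then quoting the structural description of such $c$ already obtained in Section~\ref{sec:main} together with Lemma~\ref{lem:geq4}. Recall from the discussion at the start of Section~\ref{sec:main} that $d(a_0,\taf)\geq 5$ holds \emph{if and only if} every $c\in B_2(\taf)\cap B_1(\delta)$ (for every $\delta\in B_1(\taf)$) satisfies $d(a_0,c)\geq 3$, i.e.\ $c$ and $a_0$ fill $S_g$. So the whole theorem amounts to: under the hypothesis that no component of $S_g\setminus(a_0\cup a_4)$ is a hexagon, \emph{some} such $c$ fails to fill with $a_0$ precisely when \emph{some} standard single strand curve $c\in B_2(\taf)$ has the stacking property.

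First I would handle the easy direction. Suppose there exists a standard single strand curve $c\in B_2(\taf)$ having the stacking property. Since $c$ has the stacking property, by Lemma~\ref{lem:geq4} we get $d(a_0,\taf)=4$, hence in particular $d(a_0,\taf)\not\geq 5$; contrapositively, if $d(a_0,\taf)\geq 5$ then no such $c$ exists. (One should double-check that the $c$ furnished by Lemma~\ref{lem:geq4} can indeed be taken to be a standard single strand curve realizing the stacking configuration of Figure~\ref{fig:possible_p}; this is exactly how the witness $p$ and $c$ were produced in Section~\ref{sec:conc}, so it is already in hand.)

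For the converse — the substantive direction — assume $d(a_0,\taf)\leq 5$, so there is some $\delta\in B_1(\taf)$ and some $c\in B_1(\delta)\cap B_2(\taf)$ with $d(a_0,c)\leq 2$, i.e.\ $c$ and $a_0$ do \emph{not} fill. I would now invoke the case analysis of Section~\ref{sec:main}, normalizing $\delta$ to a rectified position and $c$ to the ``spiral'' representative as described there. In Case~ii (two or more strands of $c$ in $R_{a_4}$) it was shown outright that $c$ and $a_0$ always fill, a contradiction; so $c$ has a single strand, i.e.\ $i(c,a_4)=1$. Within Case~i, it was shown that if $c$ is not a standard single strand curve then $c$ and $a_0$ fill (using Lemmas~\ref{lem:T_D_inside}, \ref{lem:almost_fill} and the scaling-curve constructions of Figures~\ref{fig:gamma_ai}--\ref{fig:gamma_aiii}), again a contradiction; hence $c$ must be a standard single strand curve. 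Next, by Lemma~\ref{lem:nonrect} we may take the strand of $c$ to have its endpoints in a top bucket $A_t$ and bottom bucket $A_b$ of a non-rectangular component $H$ of $S_g\setminus(a_0\cup a_4)$. Here is where the hexagon hypothesis enters: $H$ is non-rectangular and not a hexagon, so $H$ is a $2n$-gon with $n\geq 4$, and Lemma~\ref{lem:bucket_lemma} applies to produce a top or bottom bucket $A$ of $H$ lying outside the $\delta$-track $\delta^{p,q}$. Then the scaling-curve argument of Case~i (the $\Lambda_1\ne\Lambda_2$ versus $\Lambda_1=\Lambda_2$ dichotomy, ending in the analysis of the core curve $\mathcal P$ and the discs $Y_1,\dots,Y_l$) shows that either $c$ and $a_0$ fill — contradiction — or else $\taf,\delta,c,\mathcal P,a_0$ is a geodesic of length $4$ with all the $y_i$ lying inside $c$, which is exactly the configuration of Figure~\ref{fig:possible_p}: the inside of $c$ contains a stack of buckets. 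By the definition in Section~\ref{sec:conc} this says $c$ has the stacking property, completing the converse.

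The main obstacle, and the part that needs the most care in writing, is the last step: pinning down that the only way a standard single strand curve $c$ (with strand in a $2n$-gon, $n\geq 4$) fails to fill $a_0$ is via the stacking configuration. This rests on the scaling-curve / core-curve argument where one chases the arcs $y_1,\dots,y_l$ of $a_4\setminus a_0$ crossing the annulus $\Lambda$ and must argue that if none of them escapes the inside of $c$, the resulting disjoint curve $\mathcal P$ forces precisely a stack of buckets; conversely one must confirm that the hexagon exclusion is genuinely used (it is what lets us invoke Lemma~\ref{lem:bucket_lemma}) and that no non-rectangular, non-hexagonal component slips through the argument. I would also make explicit that the equivalence ``inside of $c$ contains a stack of buckets $\iff$ $c$ has the stacking property'' is by definition, so that the theorem statement literally matches the conclusion of the case analysis; then the proof is just: assemble the two directions above.
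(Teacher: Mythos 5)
Your proposal is correct and takes essentially the same route as the paper: the theorem is just the assembly of Lemma \ref{lem:geq4} with the Section \ref{sec:main} case analysis (Case ii and the non-standard part of Case i give filling; Lemma \ref{lem:nonrect} together with the hexagon exclusion lets Lemma \ref{lem:bucket_lemma} and the $\Lambda$/$\mathcal{P}$ argument force the stacking configuration), which is precisely the paper's one-sentence derivation in Section \ref{sec:conc}. The only slip is in your converse direction, where ``assume $d(a_0,\taf)\leq 5$'' should read ``assume $d(a_0,\taf)\leq 4$'' (the negation of $d(a_0,\taf)\geq 5$), since the existence of $c\in B_2(\taf)$ with $d(a_0,c)\leq 2$ that you then use requires a path of length $4$.
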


An advantage of theorem \ref{thm:geq5} is that it reduces the number of possible vertices through which a path of length $4$ between $a_0$ and $\taf$ if it exists can pass.

\section{A pair of curves at a distance $5$ in $\mathcal{C}(S_2)$}
\label{sec:eg5}

Let $a_0$ and $a_4$ be curves on $S_2$ as in figure \ref{fig:a0a4}. These curves are at a distance $4$ in $\mathcal{C}(S_2)$ and are taken from \cite{BMM}. In this section we show that $d(a_0, \taf)=5$ by giving a geodesic between them. Let $b_0 = a_0$, $b_1$, $b_2$, $b_3$ be curves on $S_2$ as in figure \ref{fig:b0b3} and $b_4$ be as in figure \ref{fig:b3b4}. The juxtaposition of the curves in figure \ref{fig:b0b3} and \ref{fig:b3b4} shows that $b_0$, $b_1$, $b_2$, $b_3$, $b_4$ form a path of length $4$ in $\mathcal{C}(S_2)$. 

Since $a_0$ and $a_4$ fill $S_2$, we can give a schematic of $S_2$ by giving the components of $S_2 \setminus (a_0 \cup a_4)$ as polygons whose vertices are the points of $a_0 \cap a_4$ marked as in figure \ref{fig:a0a4} and edges correspond to arcs of $a_0 \setminus a_4$ or, $a_4 \setminus a_0$. Figure \ref{fig:R1} - \ref{fig:H3H4} represent all polygons but the rectangle with vertices $10$, $9$, $4$, $5$ of $S \setminus (a_0 \cup a_4)$. We give a juxtaposition of the curves $b_4$ and $\taf$ in minimal position on $S_2$ by giving their arcs on the polygonal discs of $S \setminus (a_0 \cup a_4)$. Since the representatives of $b_4$ and $\taf$ that we pick don't have any arcs in the rectangle of $S_2 \setminus(a_0 \cup a_4)$ with vertices $10$, $9$, $4$, $5$, we exclude this rectangles from the figures.  In figure \ref{fig:R1} - \ref{fig:H3H4}, the straight lines correspond to the arcs of $\taf$ and the dotted ones correspond to $b_4$. Since there is no intersection between these arcs, we conclude that $b_4$ and $\taf$ are at a distance $1$ in $S_2$.

We now show that $d(a_0, \taf) > 4$ by using lemma \ref{lem:geq4}. Consider the curves $\gamma_1$, $\gamma_2$, $\gamma_3$ and $\gamma_4$ as in figure \ref{fig:BMM_a4} which are at a distance $1$ from $a_4$. If for any $i_0 \in \{1,2,3,4\}$, $T_{a_4}^{-1}(\delta) \cap \gamma_{i_0} = \phi$ then $a_0$, $T_{a_4}^{-1}(\delta)$, $\gamma_{i_0}$, $a_4$ will form a path of length $3$, which is absurd. Thus, $d(T_{a_4}^{-1}(\delta), \gamma_i) \geq 2$ for $i = 1, 2, 3, 4$. Now, since $d(T_{a_4}^{-1}(\delta), a_0) = 1$ and $T_{a_4}^{-1}(\delta) \cap a_4 \neq \phi$, the arcs in the non-empty set $T_{a_4}^{-1}(\delta) \cap R_{a_4}$ are parallel to the arcs in $a_0 \cap R_{a_4}$. Since, $T_{a_4}^{-1}(\delta) \cap \gamma_i \neq \phi$ for every $i = 1, 2, 3, 4$, we refer to figure \ref{fig:BMM_a4} and observe that for any two possible consecutive arcs of $T_{a_4}^{-1}(\delta) \cap R_{a_4}$  there are no stack of buckets between them. We note that we can circumvent verifying the above for the set of all possible consecutive arcs of $T_{a_4}^{-1}(\delta) \cap R_{a_4}$ by looking at only the consecutive arcs of $T_{a_4}^{-1}(\delta) \cap R_{a_4}$ that has the maximum number of top buckets between them. Since the inside of a $c$ is contained in some $\delta$-track and the strands of $\delta$ that constitute the boundary of a $\delta$-track are $T_{a_4}$-image of some arc of $T_{a_4}^{-1}(\delta)$ therefore, no $c$ has the stacking property. Thus, $d(a_0, \taf) > 4$.
 
From the above discussion, we conclude that the path in $\mathcal{C}(S_2)$ comprising of vertices $b_0=a_0$, $b_1$, $b_2$, $b_3$, $b_4$, $b_5 = \taf$ is a geodesic of length $5$ in $\mathcal{C}(S_2)$. As an application of this example we give an upper bound on $i_{min}(2,5)$ as follows : 

\begin{cor}
$i_{min}(2,5) \leq 144$.
\end{cor}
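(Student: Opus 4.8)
The plan is to combine the equality $d(a_0, \taf) = 5$ established above --- by exhibiting the explicit geodesic $b_0 = a_0, b_1, b_2, b_3, b_4, b_5 = \taf$ and ruling out a length-$4$ path via Lemma~\ref{lem:geq4} --- with a computation of the intersection number $i(a_0, \taf)$. Since $a_0$ and $a_4$ are precisely the minimally intersecting pair of distance-$4$ curves on $\mathcal{C}(S_2)$ taken from \cite{BMM}, one has $i(a_0, a_4) = i_{min}(2,4) = 12$.

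First I would invoke the standard formula for the intersection number of a simple closed curve with its image under a single Dehn twist: for simple closed curves $\alpha$ and $\gamma$ on a surface, $i(T_\gamma(\alpha), \alpha) = i(\alpha, \gamma)^2$, which is the equality case of the two-sided bound $i(\alpha,\gamma)^2 - i(\alpha,\alpha) \le i(T_\gamma(\alpha),\alpha) \le i(\alpha,\gamma)^2 + i(\alpha,\alpha)$ from \cite{FM} together with $i(\alpha,\alpha)=0$. Applying this with $\alpha = a_0$ and $\gamma = a_4$ yields $i(a_0, \taf) = 12^2 = 144$. As an independent check one could instead count the crossings of $\taf$ with $a_0$ directly from the juxtapositions drawn on the polygons of $S_2 \setminus (a_0 \cup a_4)$ in Figures~\ref{fig:R1}--\ref{fig:H3H4}, which must total the same number.

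Putting the two facts together, the pair $(a_0, \taf)$ consists of two vertices of $\mathcal{C}(S_2)$ at distance $5$ with intersection number $144$, so $i_{min}(2,5) \le 144$. There is essentially no obstacle here beyond bookkeeping; the only point worth noting carefully is that the special-position representative of $\taf$ used throughout the earlier sections is already in minimal position with $a_0$, so that the count $i(a_0,\taf) = i(a_0,a_4)^2$ is in fact exact. For the corollary, however, only the inequality $i(a_0,\taf) \le i(a_0,a_4)^2$ is needed, and this weaker statement is visible directly from the surgery picture inside $R_{a_4}$, where $\taf$ agrees with $a_0$ outside $R_{a_4}$ and meets each of the $i(a_0,a_4)$ strands of $a_0$ in $R_{a_4}$ at most $i(a_0,a_4)$ times.
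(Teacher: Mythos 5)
Your proposal is correct and follows essentially the same route as the paper, which treats the corollary as an immediate consequence of the distance-$5$ example: $d(a_0,\taf)=5$ from the exhibited geodesic together with Lemma \ref{lem:geq4}, and $i(a_0,\taf)=i(a_0,a_4)^2=12^2=144$ since the BMM pair realises $i_{min}(2,4)=12$. Your use of the standard twist--intersection formula from \cite{FM} (or the weaker upper bound read off from the special-position representative in $R_{a_4}$) is exactly the intended bookkeeping, so nothing is missing.
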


\begin{figure}
\centering
\includegraphics[width=\textwidth]{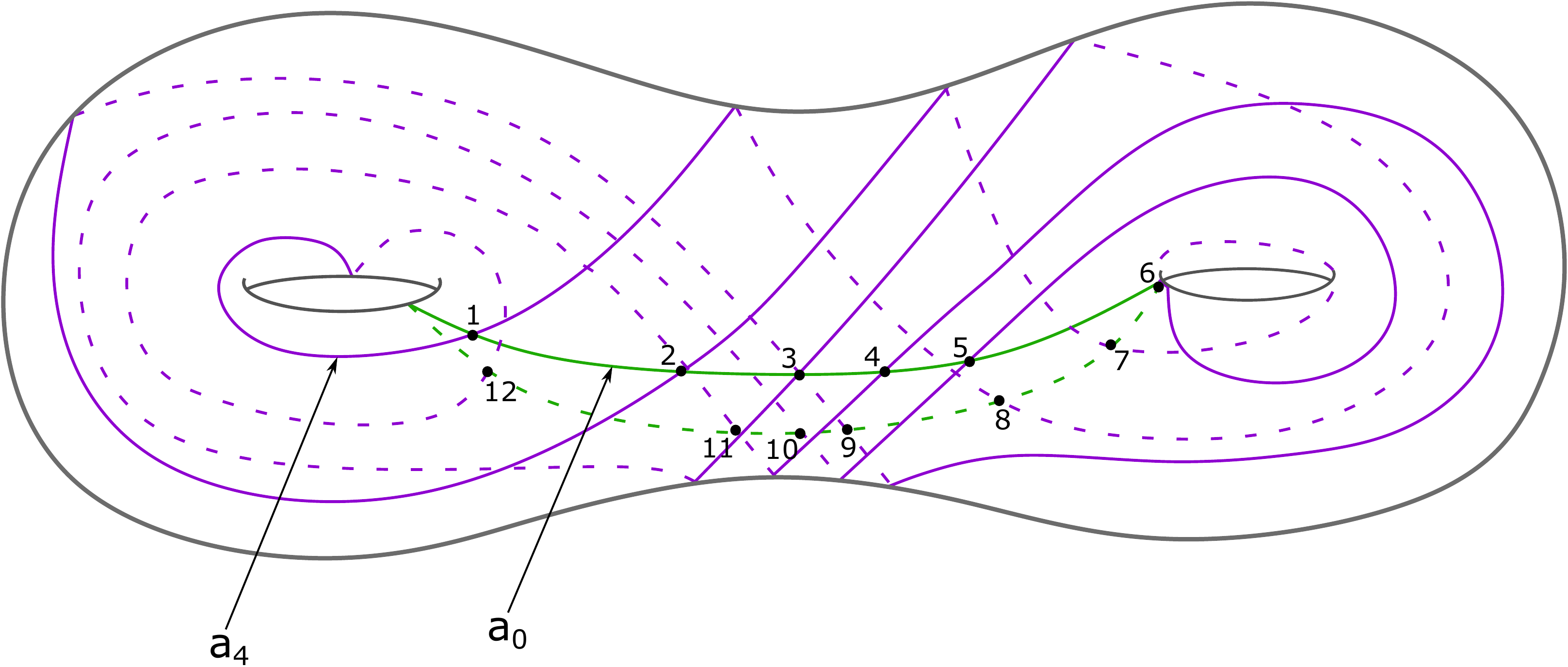}
\caption{}
\label{fig:a0a4}
\end{figure}

\begin{figure}
\centering
\includegraphics[width=\textwidth]{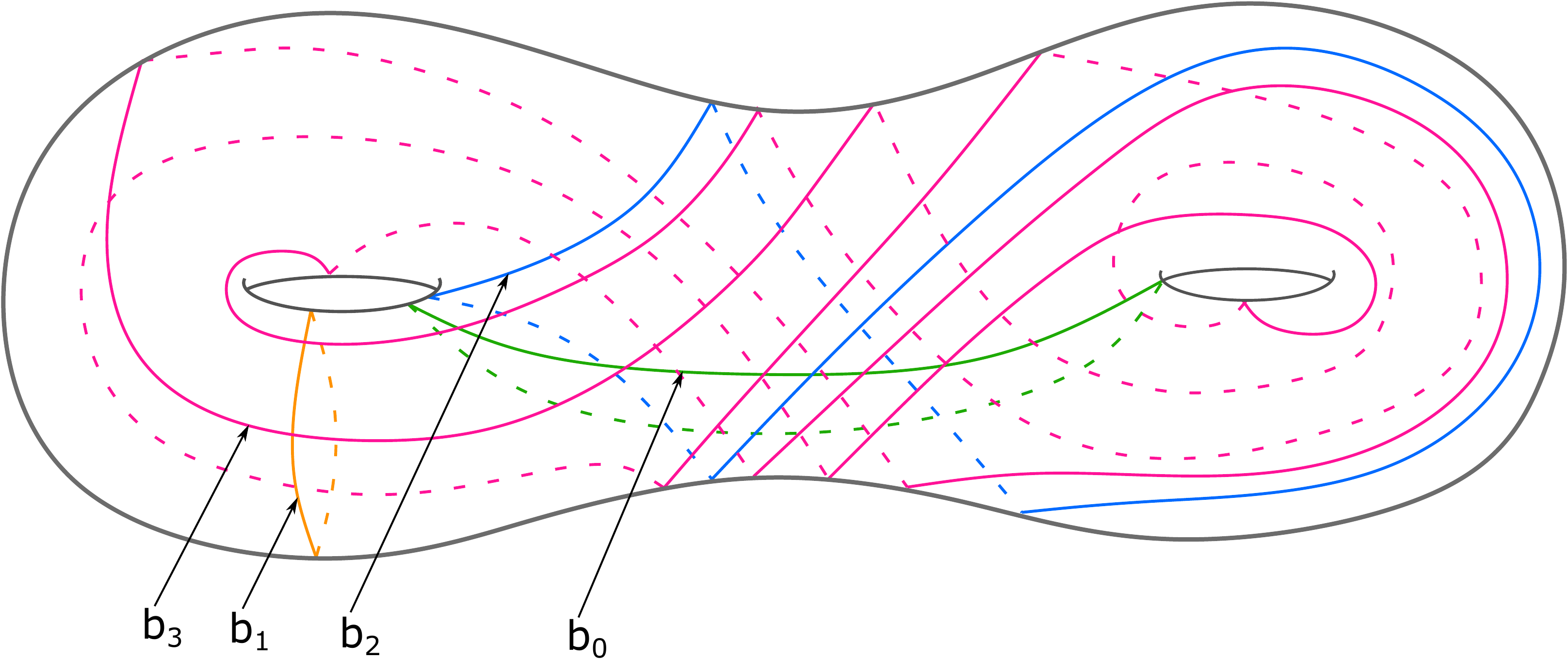}
\caption{}
\label{fig:b0b3}
\end{figure}

\begin{figure}
\centering
\includegraphics[scale=0.32]{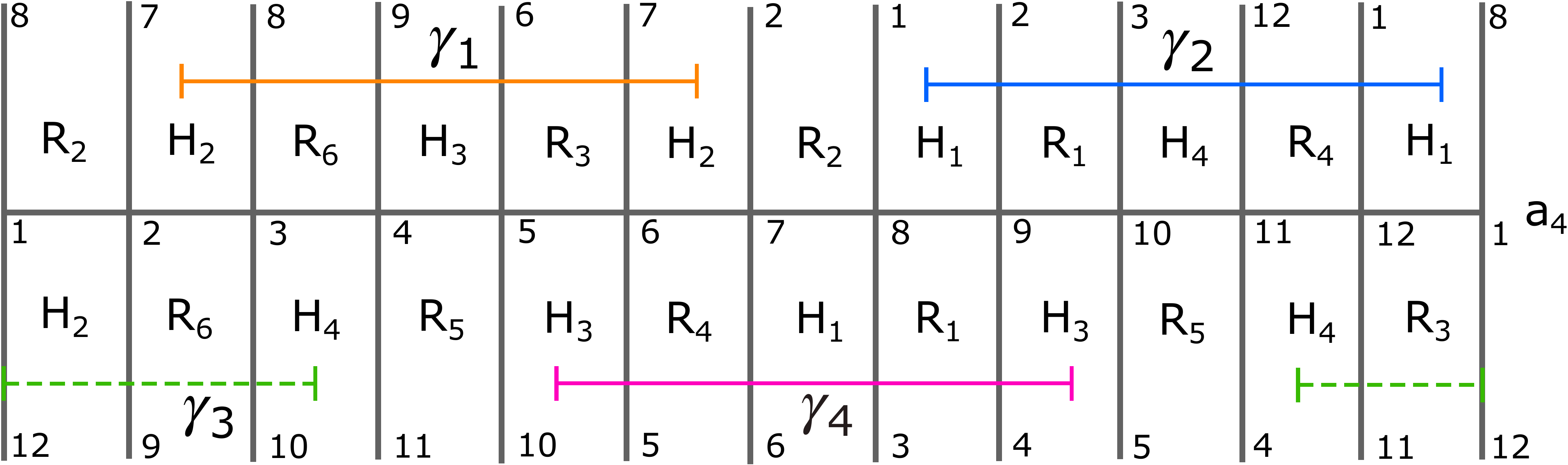}
\caption{Regular neighbourhood of $a_4$ with $a_4 \cap a_0$ marked as in figure \ref{fig:a0a4}. The vertical arcs represent $a_0$.}
\label{fig:BMM_a4}
\end{figure}

\begin{figure}
\centering
\includegraphics[width=\textwidth]{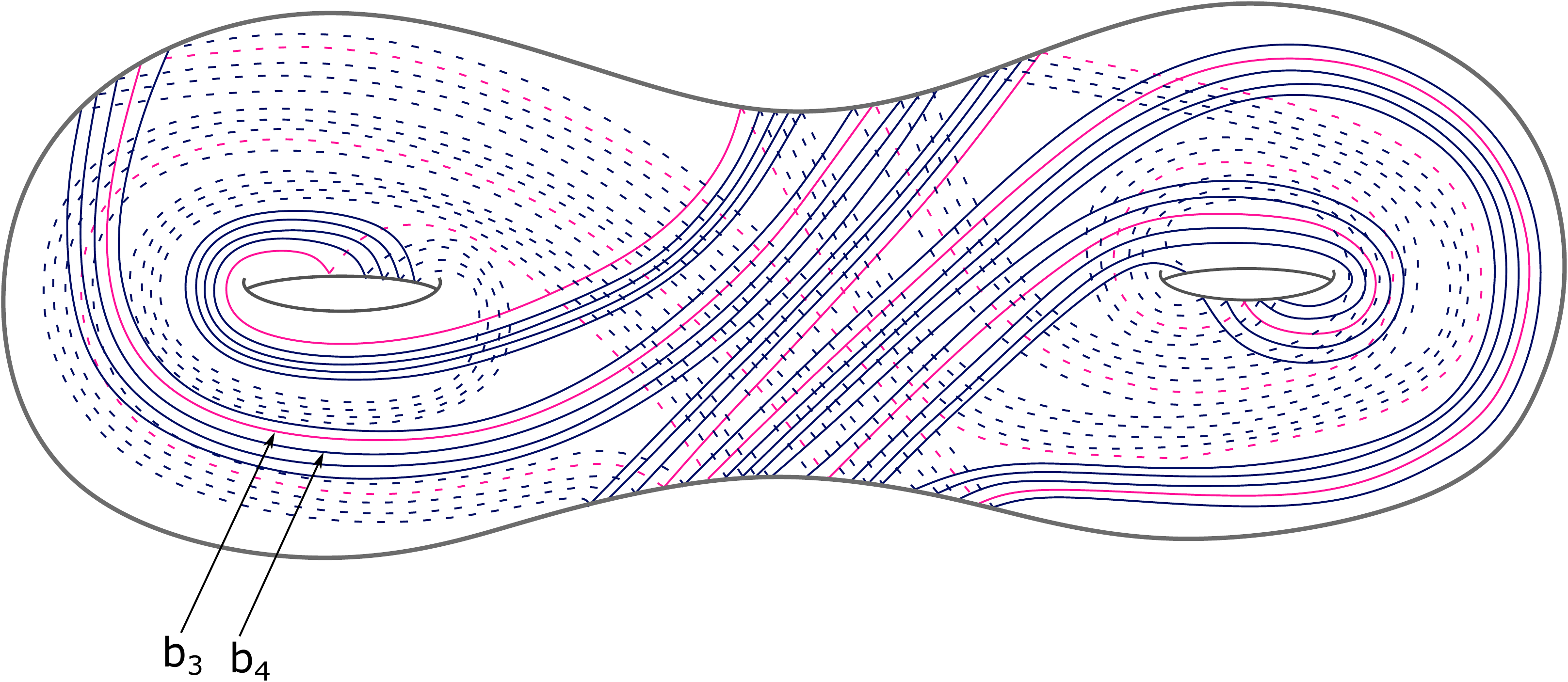}
\caption{}
\label{fig:b3b4}
\end{figure}

\begin{figure}
\centering
\includegraphics[scale=0.2]{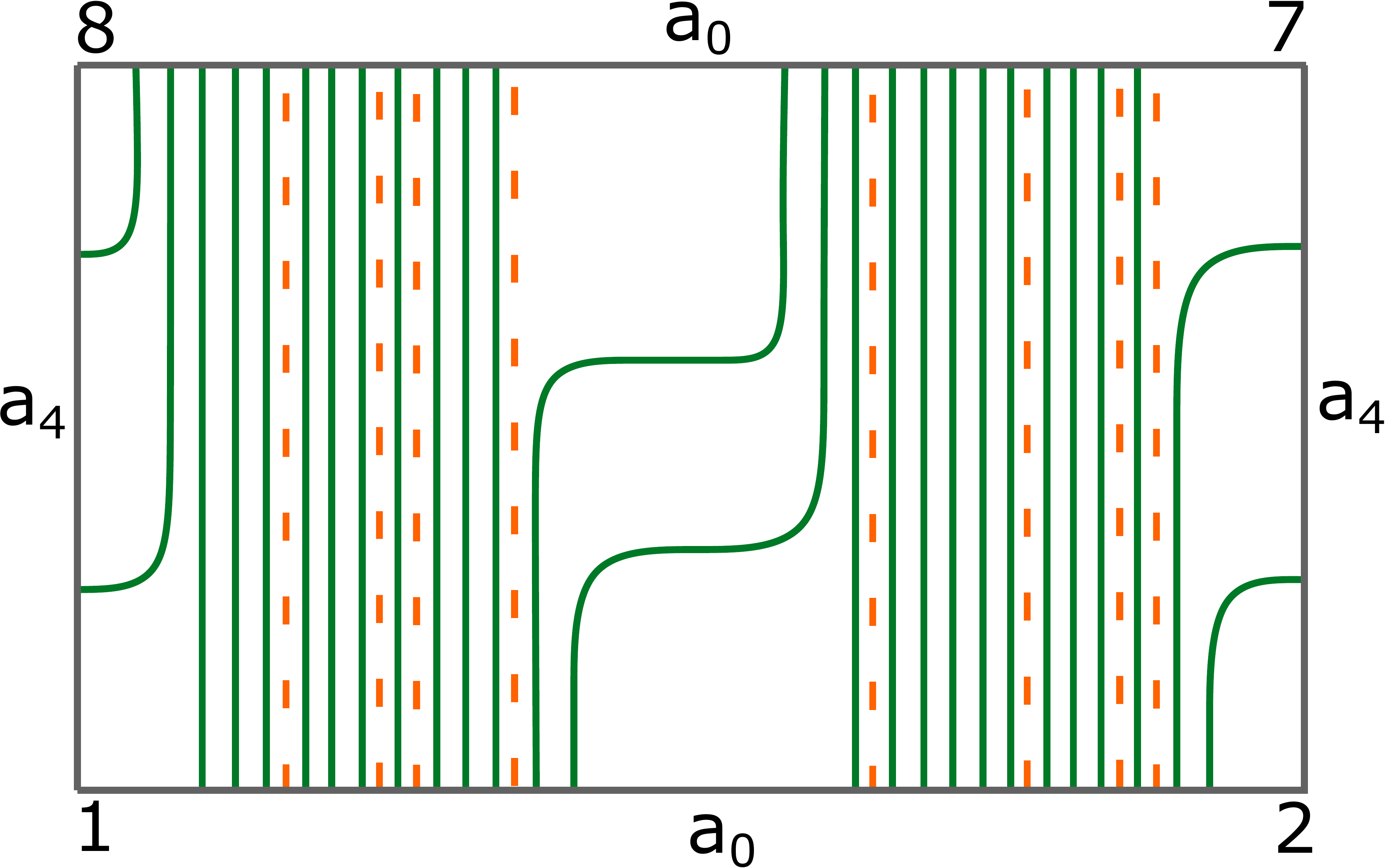}
\caption{$R_2$}
\label{fig:R2}
\end{figure}

\begin{figure}
     \centering
     \begin{subfigure}{0.45\textwidth}
         \centering
	\includegraphics[width=\textwidth]{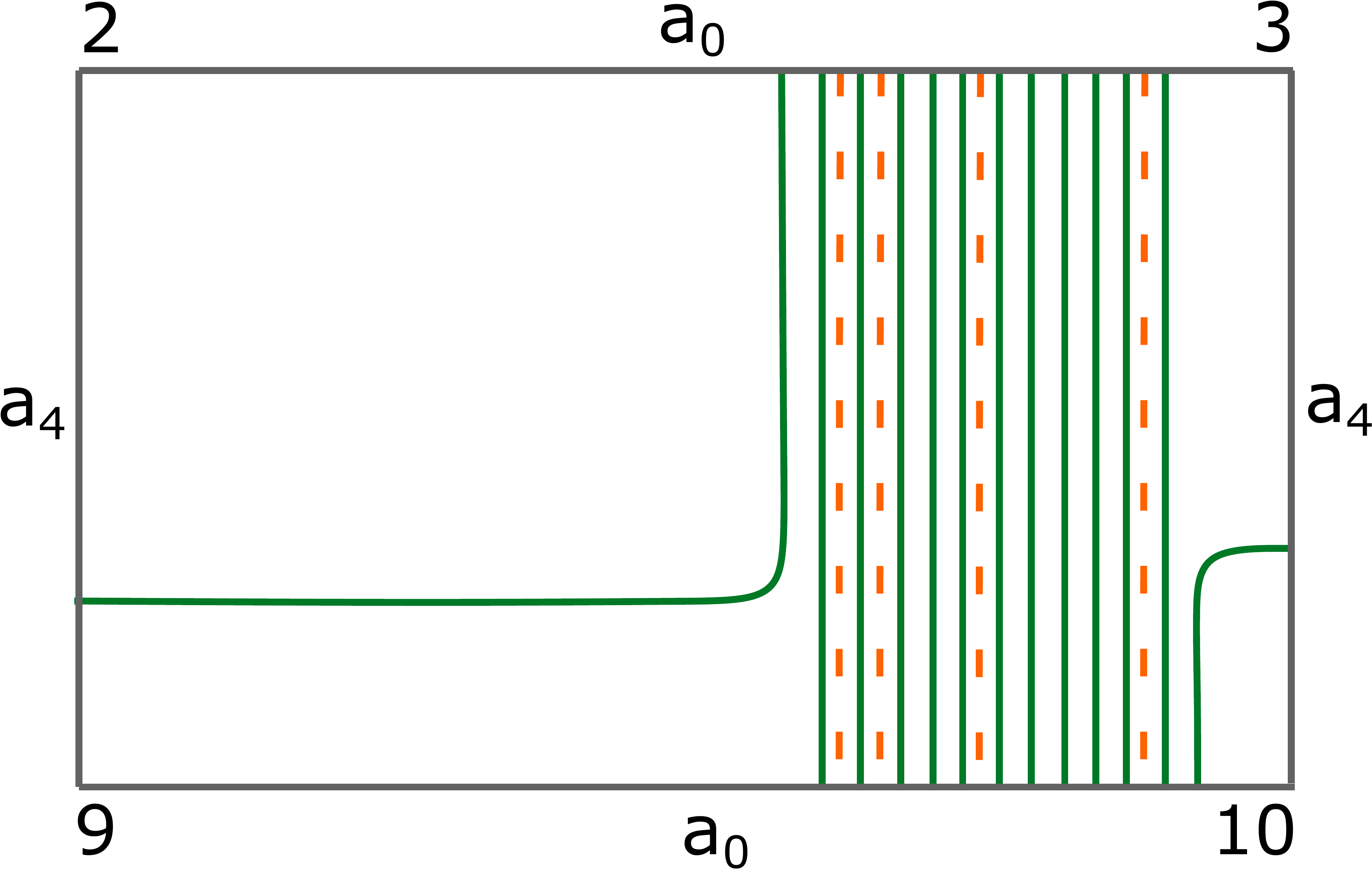}
	\caption{$R_1$}
	\label{fig:R1}
     \end{subfigure}
     \hfill
     \begin{subfigure}{0.45\textwidth}
         \centering
	\includegraphics[width=\textwidth]{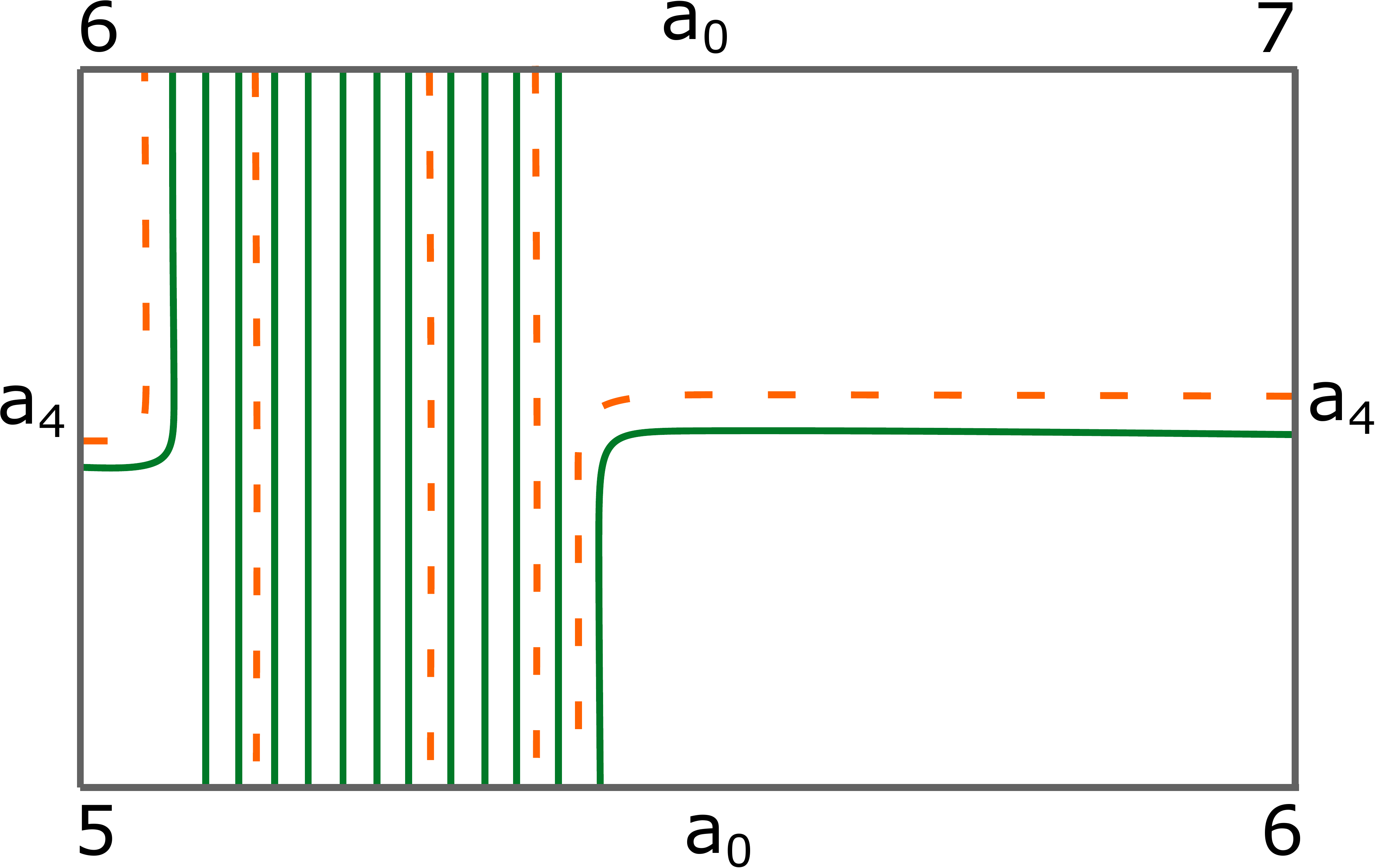}
	\caption{$R_3$}
	\label{fig:R3}
     \end{subfigure}
     \caption{}
\end{figure}

\begin{figure}
     \centering
     \begin{subfigure}{0.45\textwidth}
         \centering
	\includegraphics[width=\textwidth]{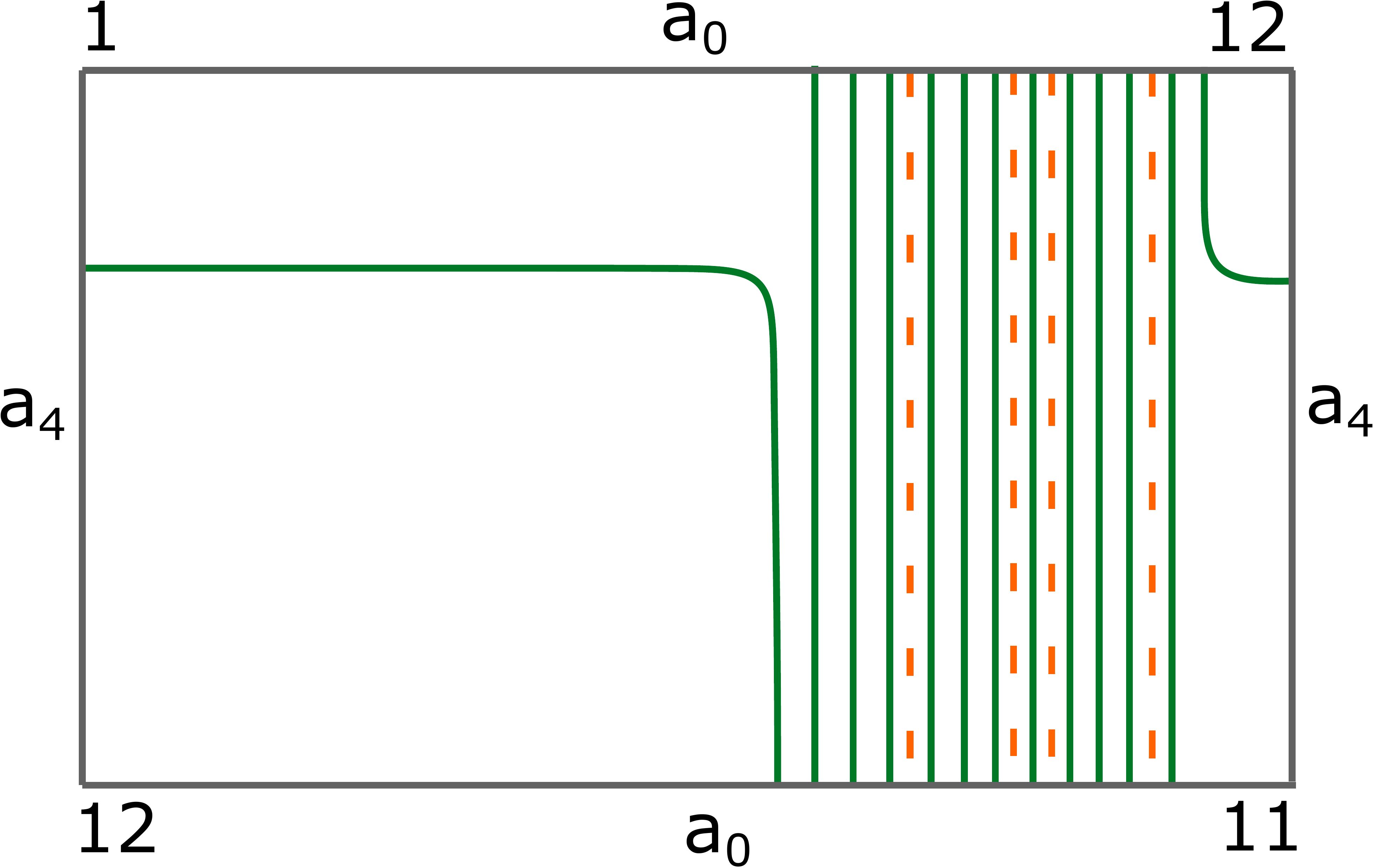}
	\caption{$R_4$}
	\label{fig:R4}
     \end{subfigure}
     \hfill
     \begin{subfigure}{0.45\textwidth}
         \centering
	\includegraphics[width=\textwidth]{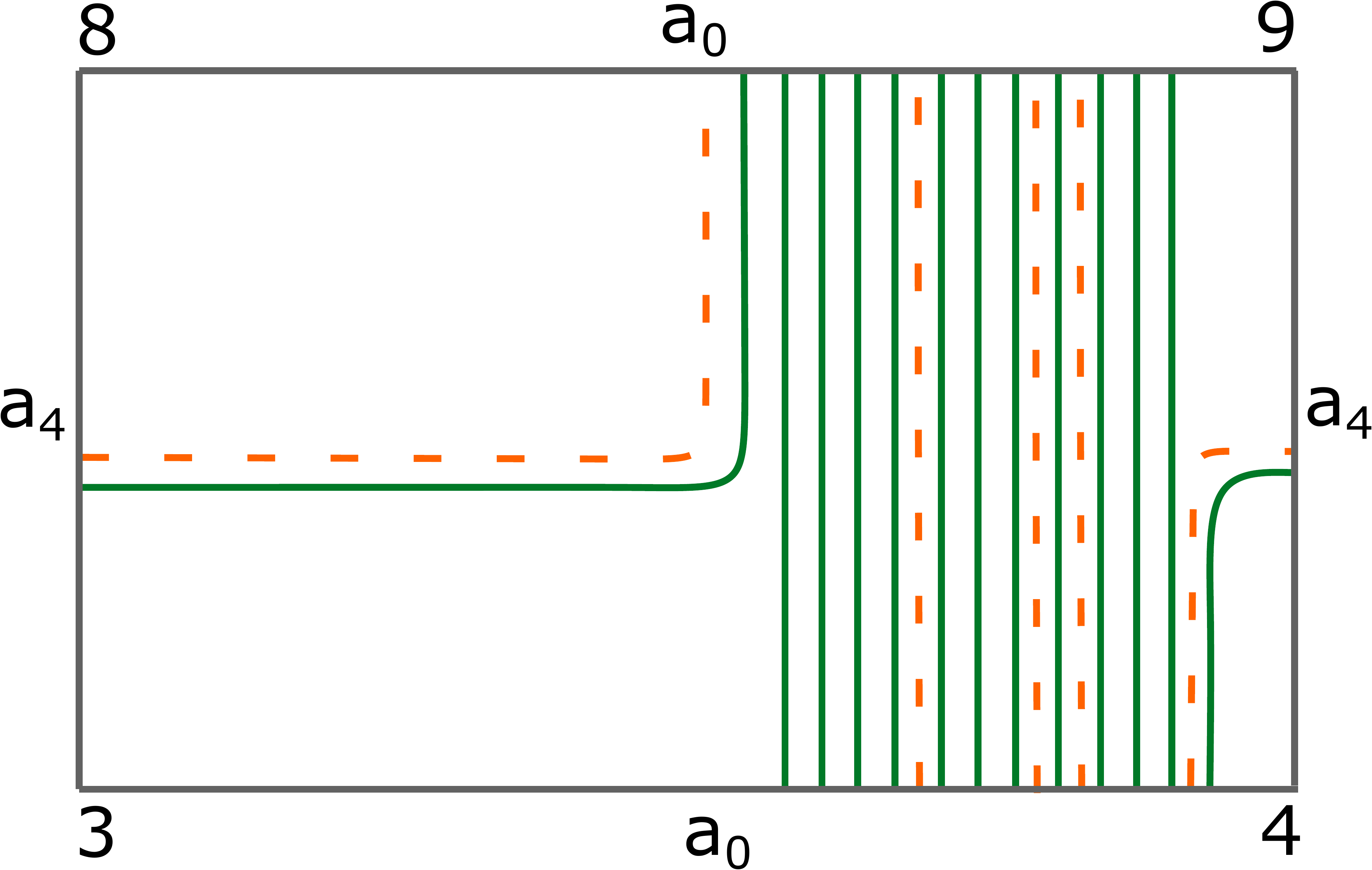}
	\caption{$R_6$}
	\label{fig:R6}
     \end{subfigure}
     \caption{}
\end{figure}

\begin{figure}
\centering
\includegraphics[scale=0.2]{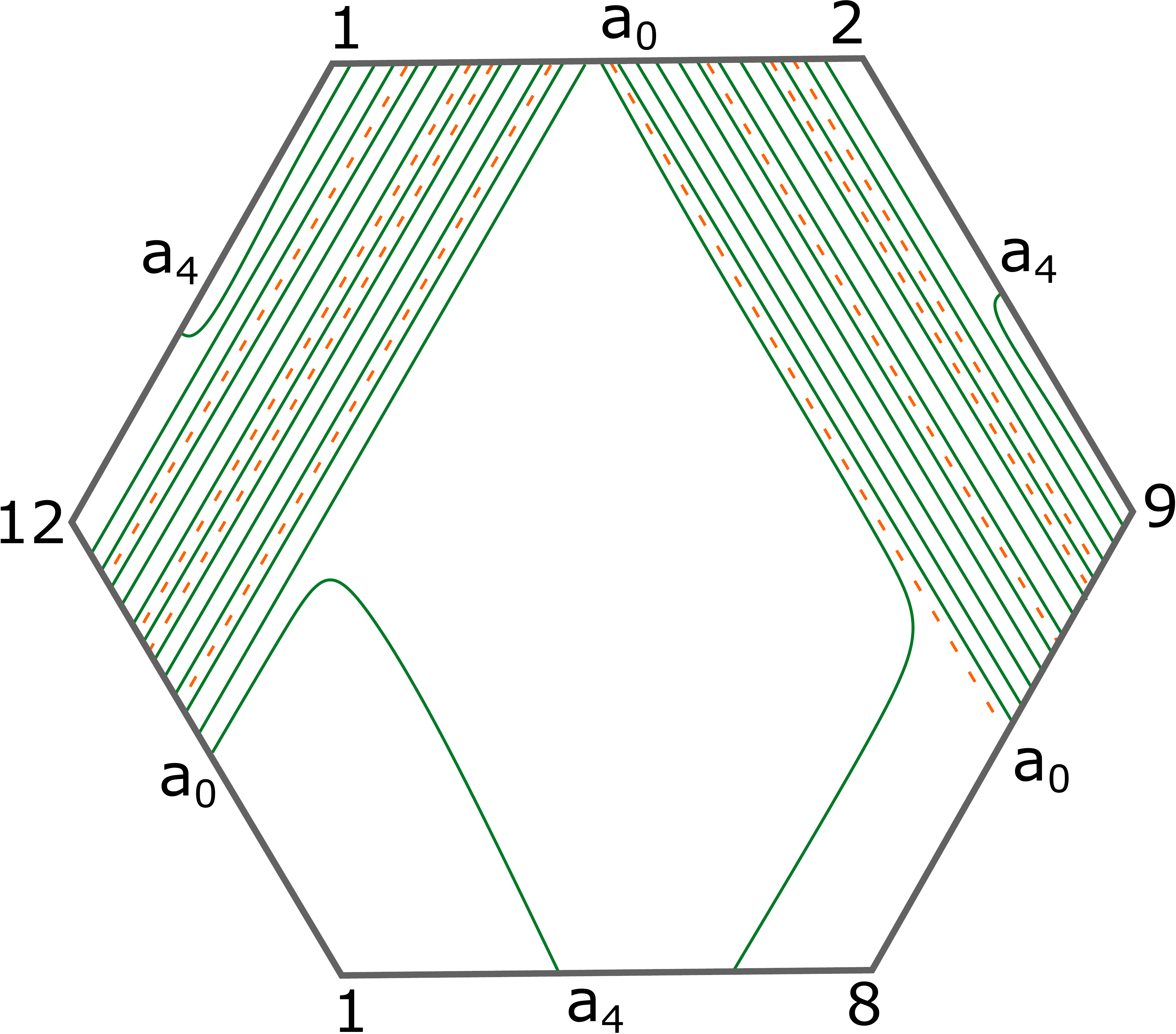}
\caption{$H_1$}
\label{fig:H1}
\end{figure}

\begin{figure}
\centering
\includegraphics[scale=0.2]{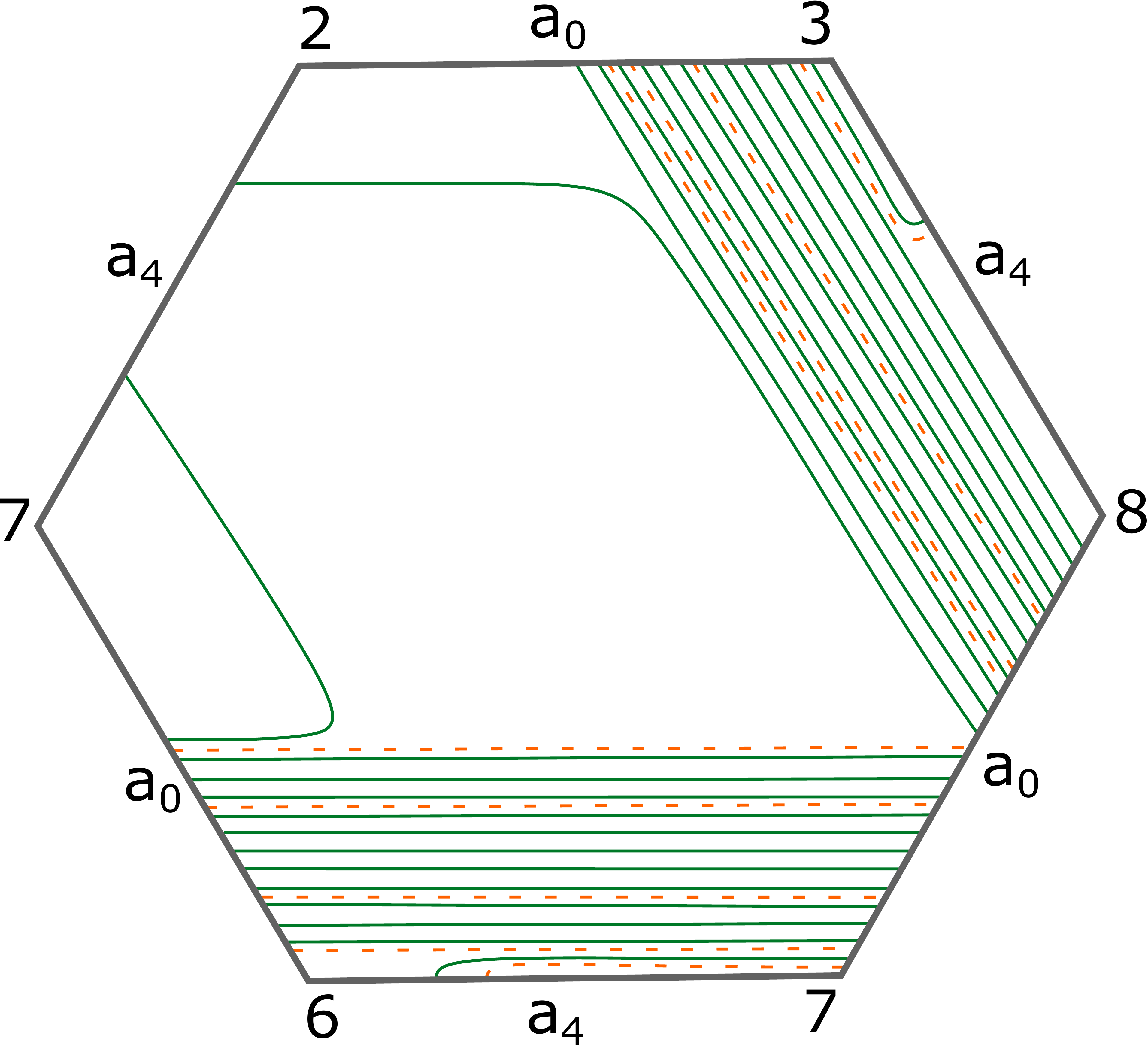}
\caption{$H_2$}
\label{fig:H2}
\end{figure}

\begin{figure}
     \centering
     \begin{subfigure}{0.45\textwidth}
         \centering
	\includegraphics[width=\textwidth]{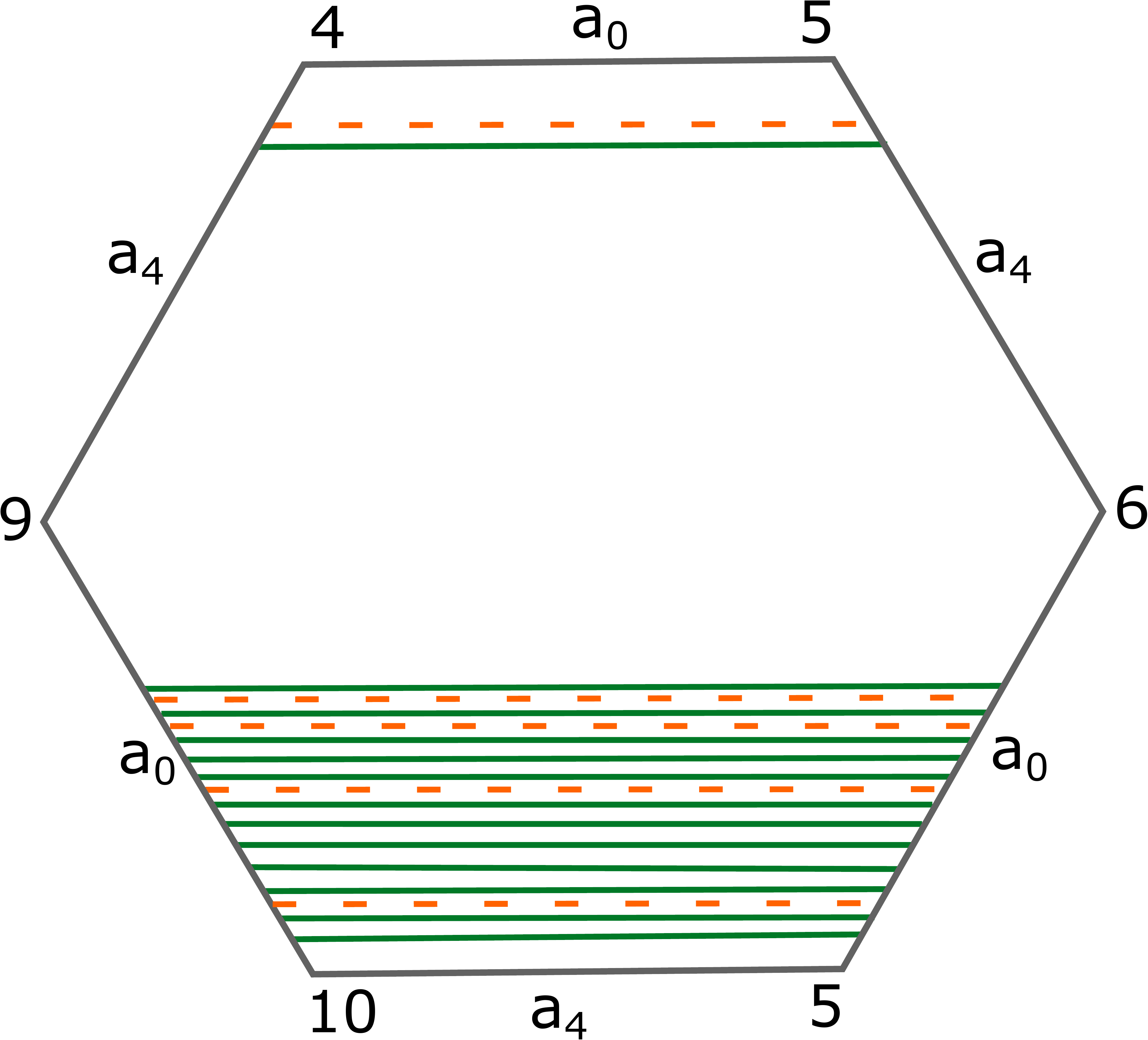}
	\caption{$H_3$}
	\label{fig:H3}
     \end{subfigure}
     \hfill
     \begin{subfigure}{0.45\textwidth}
         \centering
	\includegraphics[width=\textwidth]{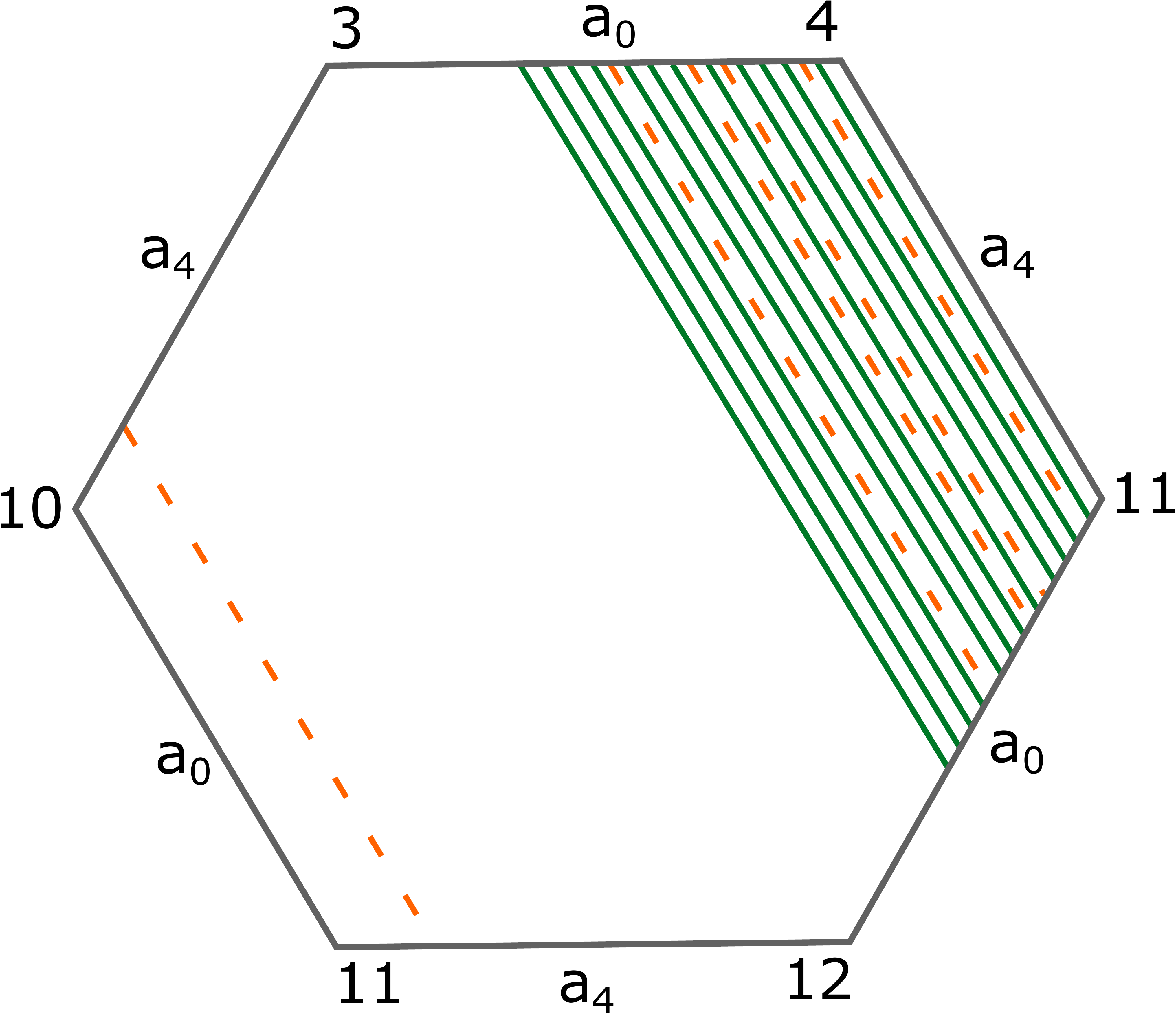}
	\caption{$H_4$}
	\label{fig:H4}
     \end{subfigure}
     \caption{}
     \label{fig:H3H4}
\end{figure}
\clearpage

\bibliographystyle{acm}
\bibliography{manuscriptbib}

\end{document}